\numberwithin{equation}{section}
\newtheorem{theorem}{Theorem}[section]
\newtheorem{definition}[theorem]{Definition}
\newtheorem{lemma}[theorem]{Lemma}
\newtheorem{proposition}[theorem]{Proposition}
\numberwithin{equation}{section}
\theoremstyle{remark}
\newtheorem{remark}[theorem]{Remark}
\newcommand{\R}{{\mathbb R}}
\newcommand{\lbb}{\overline{\lambda}}
\newcommand*{\supp}{\ensuremath{\mathrm{supp\,}}}
\def\vf{{\varphi}}
\def\lbb{\lambda}
\def\wt{\widetilde}
\def\wh{\widehat}
\def\9{{\infty}}
\def\ve{{\varepsilon}}
\def\na{{\nabla}}
\def\bbe{{\mathbb{E}}}
\def\bbr{{\mathbb{R}}}
\def\bbp{{\mathbb{P}}}
\def\({\left(}
\def\){\right)}
\def\<{\left<}
\def\>{\right>}
\def\ol{\overline}
\newcommand{\imu}{\mathrm{i}}
\newcommand{\dd}{\,\mathrm{d}}
\renewcommand{\epsilon}{\varepsilon}
\renewcommand{\Im}{\operatorname{Im}}
\renewcommand{\Re}{\operatorname{Re}}
\newcommand{\N}{{\mathbb{N}}}
\newcommand{\C}{{\mathbb{C}}}
\newcommand{\Z}{{\mathbb{Z}}}
\newcommand{\G}{{\mathbb{G}}}
\newcommand{\X}{{\mathbb{X}}}
\newcommand{\Y}{{\mathbb{Y}}}
\newcommand{\cL}{{\mathcal{L}}}
\newcommand{\cF}{{\mathcal{F}}}
\newcommand{\cE}{{\mathcal{E}}}
\newcommand{\cI}{{\mathcal{I}}}
\newcommand{\cP}{{\mathcal{P}}}
\newcommand{\cT}{{\mathcal{T}}}
\newcommand{\cU}{{\mathcal{U}}}
\newcommand{\cV}{{\mathcal{V}}}
\newcommand{\Schw}{\mathcal{S}}
\newcommand{\om}{\omega}
\newcommand{\on}{\overline{n}}
\newcommand{\ep}{\varepsilon}
\newcommand{\bdyop}{\Omega_{\mathrm{b}}}
\newcommand{\resf}{\om_{\mathrm{r}}}
\newcommand{\Sp}{{\mathbb S}}
\newcommand{\vece}{{\textnormal{\textbf{e}}}}
\begin{document}
	
\title[] {The three dimensional
stochastic Zakharov system}

\author{Sebastian Herr}
\address{Fakult\"at f\"ur Mathematik,
Universit\"at Bielefeld, D-33501 Bielefeld, Germany}
\email[Sebastian Herr]{herr@math.uni-bielefeld.de}
\thanks{}

\author{Michael R\"ockner}
\address{Fakult\"at f\"ur Mathematik,
Universit\"at Bielefeld, D-33501 Bielefeld, Germany; 
Academy of Mathematics and Systems Science, CAS, Beijing 100190, China}
\email{roeckner@math.uni-bielefeld.de}
\thanks{}

\author{Martin Spitz}
\address{Fakult\"at f\"ur Mathematik,
Universit\"at Bielefeld, D-33501 Bielefeld, Germany}
\email[Martin Spitz]{mspitz@math.uni-bielefeld.de}
\thanks{}

\author{Deng Zhang}
\address{School of Mathematical Sciences, MOE-LSC,
CMA-Shanghai, Shanghai Jiao Tong University, China.}
\email[Deng Zhang]{dzhang@sjtu.edu.cn}
\thanks{}

\keywords{well-posedness, stochastic Zakharov system, rescaling approach, normal form method, regularization by noise}

\subjclass[2020]{60H15, 35Q55.}

\begin{abstract}
   We study the three dimensional stochastic Zakharov system in the energy
space, where the Schr\"odinger equation is driven by linear multiplicative noise and the wave equation is
driven by additive noise. We prove the well-posedness of the system up to the maximal existence time and
provide a blow-up alternative. We further show that the solution exists at least as long as it remains below
the ground state. Two main ingredients of our proof are refined rescaling transformations and the normal
form method. Moreover, in contrast to the deterministic setting, our functional framework also incorporates the local smoothing
estimate for the Schr\"odinger equation in order to control
lower order perturbations arising from the noise. Finally, we prove a regularization by noise result which
states that finite time blowup before any given time can be prevented with high probability by adding sufficiently
large non-conservative noise. The key point of its proof is an estimate in Strichartz spaces for solutions of a Schrödinger type equation with a nonlocal potential involving the free wave.
\end{abstract}

\maketitle

% {
% \tableofcontents
% }

%%%%%%%%%%%%%%%%%%%%%%%%%%

\section{Introduction and main results}   \label{Sec-Intro}

\subsection{The Zakharov system} \label{Subsec-Intro}

We consider the three dimensional stochastic Zakharov system
\begin{equation}   \label{equa-stoZak}
	\left\{\aligned
	 \imu \dd X + \Delta X \dd t &= \Re(Y) X \dd t  - \imu \mu X \dd t + \imu X \dd W_1(t),  \\
	 \frac{1}{\alpha} \imu \dd Y + |\na|Y \dd t &= -|\na| |X|^2 \dd t + \dd W_2(t), \\
      (X(0), Y(0)) &= (X_0, Y_0) \in H^1 \times L^2.
	\endaligned
	\right.
\end{equation}
Here, the wave speed $\alpha>0$ is a fixed constant,
$\{W_j(t), t\geq 0\}$ are independent Wiener processes
white in time and colored in space,
\begin{align*}
   W_j(t,x) = \sum\limits_{k=1}^\infty \imu \phi^{(j)}_k(x) \beta^{(j)}_k(t),
   \ \ x\in \bbr^3,\ \ t\geq 0
\end{align*}
for $j=1,2$, $\{\phi^{(1)}_k\} \subseteq H^3(\bbr^3)$ and  $\{\phi^{(2)}_k\} \subseteq H^1(\bbr^3)$ are real-valued and
satisfy the summability conditions in~\eqref{phik-condition}, and
$\{\beta^{(j)}_k\}$ are real-valued independent Brownian motions
on a stochastic basis $(\Omega, \mathscr{F}, \{\mathscr{F}_t\}, \mathbb{P})$.
The stochastic term $X \dd W_1(t)$ is taken in the sense of It\^o,
and
\begin{align*}
    \mu = \frac 12 \sum\limits_{k=1}^\infty |\phi^{(1)}_k|^2 < \infty.
\end{align*}
In particular,
$-\imu \mu X \dd t + \imu X \dd W_1(t)$ is the usual Stratonovich differential $\imu X \circ  \dd W_1(t)$.

In 1972, Zakharov introduced the quadratically coupled system of a Schr\"odinger and a wave equation~\eqref{eq:ZakharovSystem}
in order to describe rapid oscillations of the electric field (Langmuir waves) in a non- or weakly magnetized plasma, see~\cite{Z72}. We provide more information on the model in Subsection~\ref{subsec:heur} below.

The deterministic Zakharov system has attracted a lot of attention in the literature as its coupling of a Schr\"odinger and a wave equation leads to interesting phenomena and presents various mathematical challenges.
Moreover, the Zakharov system is closely connected to the focusing cubic Schr\"odinger equation which arises in the subsonic limit $\alpha \to \infty$,
see \cite{SW86,OT92,MN08}. We refer to~\cite{BC96, GTV97, CHN20, Sa22} and the references therein for the local well-posedness theory and to~\cite{GNW13,G16,GN21,CHN21} and the references therein for the long-time behavior of the Zakharov system.

In contrast, very little is known about the stochastic Zakharov system. The available results~\cite{T22, GLY09,B22} are all restricted to the one dimensional case and consider the Zakharov system with additive noise.

In this article, we propose to consider the Zakharov system with linear multiplicative noise in the Schr\"odinger equation and additive noise in the wave equation. We provide heuristic arguments for this type of noise from a modeling perspective in Subsection~\ref{subsec:heur} below. Linear multiplicative noise in the Schr\"odinger equation also preserves the conservation of mass
of the Schr\"odinger component
\begin{align}  \label{mass-conserv}
   \|X(t)\|_{L^2}^2 = \|X_0\|_{L^2}^2
\end{align}
from the deterministic setting. Moreover, adding additive noise in the wave equation, at least formally we obtain the cubic focusing Schr\"odinger equation with linear multiplicative noise as the subsonic limit $\alpha \rightarrow \infty$ of the Zakharov system.

The aim of this work is to develop the well-posedness theory
for the three dimensional stochastic Zakharov system \eqref{equa-stoZak}
in the energy space $H^1 \times L^2$.
We prove the well-posedness up to the maximal existence time and
provide a blow-up alternative.
Using the variational characterization of the ground state,
we also show the well-posedness of the stochastic Zakharov system below the ground state.

One major difficulty
in solving the Zakharov system in the energy space is
that the Schr\"odinger component has to be controlled in $H^1$ while the wave component in the nonlinearity $\Re(Y) X$ only belongs to $L^2$.
Moreover, as pointed out in \cite{T22},
since the path of Brownian motions has merely $C^{1/2-}$ regularity, one cannot expect that in the stochastic case
solutions belong to the usual Bourgain space $X^{s,b}$ with $b>1/2$ which was
used in the deterministic setting.

Our proof utilizes a refined rescaling approach and a normal form method, which is new in the stochastic literature. The rescaling approach allows a pathwise treatment of the system so that we can apply a normal form transform which exploits the resonance structure of the nonlinearity in the Schr\"odinger equation and reveals that the loss of derivatives indicated above can be avoided. On the other hand, the rescaling leads to first order perturbation terms originating from the noise.
Our functional framework thus also incorporates local smoothing estimates
in order to control these first order perturbations,
which do not appear in the deterministic setting.
More precisely, in order to exploit the sharp local smoothing effect, we adapt an approach developed in~\cite{BIKT11} for the Schr{\"o}dinger map problem. Furthermore, compared with previous work on stochastic nonlinear Schr\"odinger equations (SNLS for short) \cite{BRZ14,BRZ16,Z17,Z18}, we remark that we can deal with noise of much lower regularity here.

Another delicate fact is
that the control of the first order perturbation requires a small amplitude
of the path of the Brownian motions, which, however, is only possible up to a small enough stopping time.
Consequently, we encounter another difficulty in extending the solution up to the maximal existence time,
as well as in proving a blow-up alternative,
which are the foundation for the long-time analysis.
Our strategy utilizes the refined rescaling transformations
which have been recently developed in \cite{Z18,Z19}
to tackle the global well-posedness problem for SNLS, particularly
in the critical regime.
Rather than performing one rescaling transformation,
we apply a series of rescaling transformations
which incorporate stopping times
constructed iteratively to keep track of the trajectory of the noise.
We can then construct solutions on every small random interval.
Finally, probabilistic arguments involving the H\"older continuity of the noise are used to glue these solutions together up to the maximal existence time.

Our method also permits to investigate the regularization by noise effect on well-posedness. To that purpose, we first note that finite time blowup might occur for the Zakharov system. In fact,
finite time blow-up was proved in~\cite{GM94} in dimension $d=2$ and conjectured in~\cite{M96} for $d=3$.

We prove that, with high probability,
non-conservative noise can prevent blow-up on bounded time intervals
for any $H^1 \times H^1$ initial data,
hence even for initial data above the ground state constraint
which may lead to finite time blow-up in the deterministic case.
Its proof uses a new type of rescaling transformation,
which reveals a geometrical Brownian motion
which decays exponentially fast at infinity and dampens the nonlinearity.
The key point is then to prove Strichartz estimates
for the Schr\"odinger operator with a potential solving the free wave equation.

\medskip

Before formulating our main results,
we first review the literature for
both the deterministic and stochastic Zakharov system.

\paragraph{\bf The deterministic Zakharov system}

The deterministic Zakharov system~\eqref{eq:ZakharovSystemFirstOrder} is a Hamiltonian system and conserves the mass
\begin{align}  \label{Mass-def}
   M(u) := \frac 12 \int |u|^2 \dd x ,
\end{align}
and the Hamiltonian
\begin{align} \label{EZ-def}
   E_Z(u,v) := \int \frac{|\na u|^2}{2} + \frac{|v|^2}{4} + \frac{{\rm Re \,} v |u|^2}{2} \dd x .
\end{align}

The conservation laws suggest the energy space $H^1\times L^2$
as a natural state space for the well-posedness of the Zakharov system \eqref{eq:ZakharovSystemFirstOrder}. In dimensions $d=2,3$ local well-posedness in the energy space and global well-posedness under a smallness condition
were proved by Bourgain-Colliander \cite{BC96}, see also Ginibre-Tsutsumi-Velo \cite{GTV97}, which covers the case $d = 1$ as well.
There has been extensive research on the question of determining the optimal range of regularity parameters $(s,l)$ such that the Zakharov system is locally well-posed in $H^s \times H^l$. A comprehensive answer has recently be given by Candy-Herr-Nakanishi~\cite{CHN20} in dimensions $d \geq 4$ and by Sanwal~\cite{Sa22} in dimensions $d \leq 3$. We refer to~\cite{CHN20, Sa22} for previous work on and the historic development of that problem.

As mentioned above,
the Zakharov system \eqref{eq:ZakharovSystemFirstOrder}
is closely related to the focusing cubic
nonlinear Schr\"odinger equation (NLS)
\begin{align}  \label{equa-NLS-cubic}
   \imu \partial_t u + \Delta u = - |u|^2 u,
\end{align}
since the latter arises as the subsonic limit $\alpha\to \infty$ of
the Zakharov system, see~\cite{SW86,OT92,MN08}.
The close connection between the Zakharov system and the cubic NLS also appears in the Hamiltonian
\begin{align}  \label{EZ-Hamil}
   E_Z(u,v) = E_S(u) + \frac 14 \| v +  |u|^2\|_{L^2}^2,
\end{align}
where $E_S(u)$ is the Hamiltonian for the focusing cubic NLS, i.e.,
\begin{align}   \label{ES-Hamil}
   E_S(u) := \int \frac{|\na u|^2}{2} - \frac{|u|^4}{4} \dd x.
\end{align}

This close relationship indicates the difficulty in studying the
global behavior
of solutions to the Zakharov system.
In fact,
in the 1D case
the cubic nonlinearity in the NLS is $L^2$-subcritical
with respect to scaling
and hence \eqref{equa-NLS-cubic} is globally well-posed in $L^2$.
But in the 2D case
the cubic nonlinearity becomes $L^2$-critical and
there exist solutions (e.g., the pseudo-conformal blow-up solutions)
which form a singularity in finite time.
For higher dimensions,
the cubic nonlinearity is even more singular
and becomes $H^1$-critical when $d=4$.

In the seminal work \cite{KM06},
Kenig-Merle proved that
the ground state is the sharp threshold for
global well-posedness, scattering and blowup
for the radial energy-critical focusing NLS in dimensions $3 \leq d \leq 5$.
In the 3D case,
the corresponding ground state $Q$
is the unique positive radial solution of the nonlinear elliptic equation
\begin{align}    \label{Ground-def}
     - \Delta Q + Q = Q^3.
\end{align}

The Kenig-Merle approach can also be used for the Zakharov system.
In dimension $d = 3$,
the scattering behavior for radial solutions with small energy
was proved by Guo-Nakanishi~\cite{GN14} and for radial solutions below the ground state by
Guo-Nakanishi-Wang \cite{GNW13}.
For the non-radial case with additional conditions
we refer to \cite{G16,GLNW14,HPS13, S22}.

In the energy-critical dimension $d = 4$,
global well-posedness and scattering for radial initial data
below the ground state
was proved by Guo-Nakanishi \cite{GN21}.
For the non-radial case,
the small-data global well-posedness and scattering
were proved in \cite{BGHN15}.
Recently, the global well-posedness below the ground state
was shown by Candy, Nakanishi
and the first author \cite{CHN21}.
The scattering for general non-radial data below the ground state
remains a challenging open problem.
We refer to the recent progress  by Candy \cite{C22} on the existence of a minimal energy
almost periodic solution if the scattering fails.
\medskip

\paragraph{\bf The stochastic case}
Much less is known in the stochastic case. The Zakharov system with additive noise
on a bounded interval with zero
Dirichlet boundary condition was studied in \cite{GLY09}
based on the Galerkin method.
Very recently,
Tsutsumi~\cite{T22} proved the global well-posedness for
the 1D Zakharov system driven by additive noise,
using the Fourier restriction method. Finally, the subsonic limit of the 1D Zakharov system with additive noise in the wave equation was studied by Barru{\'e}~\cite{B22}.

The stochastic Zakharov system with multiplicative noise, the well-posedness in the energy space and the stochastic Zakharov system in higher dimensions have not been studied before.

At least formally, in the subsonic limit $\alpha\to\infty$
the stochastic Zakharov system \eqref{equa-stoZak}
 reduces to the focusing SNLS
 driven by linear multiplicative noise
\begin{align} \label{equa-SNLS}
    \imu \dd X + \Delta X \dd t = -|X|^2 X \dd t  - \imu \mu X \dd t + \imu X \dd W(t),
\end{align}
where $W= W_1 - \imu |\na|^{-1} W_2$.
In the case where $W_2\equiv 0$,
the $H^1$ local well-posedness of \eqref{equa-SNLS} is well known,
see, e.g., \cite{BD03,BM14,BRZ16}.

The global existence and large time behavior of solutions to SNLS
are quite delicate.
In the case $d=2$,
the cubic nonlinearity is $L^2$-critical.
On the one hand,
the stochastic solutions exist globally below the ground state~\cite{M19}.
Furthermore,
critical mass blow-up solutions have recently been constructed in~\cite{SZ19} using the modulation method.
Hence, the ground state is still the sharp threshold for the global well-posedness and blowup
when $d=2$.

We also refer to \cite{FSZ20}
for the construction of stochastic log-log blow-up solutions,
and to \cite{SZ20,RSZ21,RSZ21.2} for the construction and conditional uniqueness
of multi-bubble Bourgain-Wang type blow-up solutions and (non-pure) multi-solitons,
which provide new examples for the mass quantization conjecture
and the soliton resolution conjecture.
For the refined uniqueness of pure multi-solitons
in the low asymptotical regime,
related to an open problem raised by Martel \cite{M18},
we refer to \cite{CSZ21}.

\subsection{Main results} \label{Subsec-Main}

We first present
the hypothesis for the spatial functions in the noise,
which also relates to the lateral spaces we introduce in Subsection~\ref{Subsec-Funct} in order to capture  the local smoothing estimates.

\paragraph{\bf Hypothesis (H)}
The spatial functions $\{\phi^{(j)}_k\}$, $j=1,2$, satisfy
the following summability conditions:
   \begin{align} \label{phik-condition}
   \sum\limits_{k=1}^\infty \|\phi^{(1)}_k\|_{H^3}^2 +
   \sum\limits_{j=1}^3  \sum\limits_{k=1}^\infty \int  \sup_{y\in \bbr^2} | \na \phi^{(1)}_k(r \vece_j+y)|\dd r  <\infty, \ \
   \sum\limits_{k=1}^\infty \|\phi^{(2)}_k\|_{H^1}^2 <\infty,
\end{align}
where $\vece_1, \vece_2, \vece_3$ denotes the natural orthonormal basis of $\bbr^3$.

It is standard that under the summability \eqref{phik-condition},
$W_1$ and $W_2$ are Wiener processes in $H^3$ and $H^1$, respectively.
See, e.g., \cite{PR07}.

We present the definition of solutions to \eqref{equa-stoZak},
which are taken in the probabilistic strong and
analytically weak sense. Since the wave speed $\alpha$ is a fixed constant in the well-posedness theory of~\eqref{equa-stoZak}, we set $\alpha = 1$ for convenience.
\begin{definition}   \label{Def-weak-sol}
Fix $T\in (0,\infty)$.
We say that $(X,Y)$ is a probabilistic strong solution to \eqref{equa-stoZak}
on $[0,\tau]$,
where $\tau\in(0,T]$ is an $\{\mathscr{F}_t\}$-stopping time,
if $(X,Y)$ is an $H^1\times L^2$-valued
$\{\mathscr{F}_t\}$-adapted process which belongs to $C([0,\tau],H^1 \times L^2)$
and satisfies $\mathbb{P}$-a.s.
for any $t\in [0,\tau]$,
\begin{equation}   \label{equa-stoZak-def}
	\left\{\aligned
	 X(t) &= \int_0^t \imu \Delta X \dd s - \int_0^t  \imu \Re(Y) X \dd s - \int_0^t \mu X \dd s + \int_0^t X \dd W_1(s),    \\
	Y(t) &= \int_0^t \imu |\na|Y \dd s + \int_0^t \imu |\na| |X|^2 \dd s - \imu W_2(t),
	\endaligned
	\right.
\end{equation}
as equations in $H^{-1} \times H^{-1}$.

Given an $\{\mathscr{F}_t\}$-stopping time $\tau^*$, we also call $(X,Y)$ a probabilistic strong solution to \eqref{equa-stoZak}
on $[0,\tau^*)$ if $(X,Y)$ is an $\{\mathscr{F}_t\}$-adapted process belonging to $C([0,\tau^*),H^1 \times L^2)$ such that for any $T \in (0,\infty)$ and any $\{\mathscr{F}_t\}$-stopping time $\tau < \tau^*$, $(X,Y)$ is a probabilistic strong solution to \eqref{equa-stoZak} on $[0,\tau \wedge T]$.
\end{definition}

The first main result of this paper provides the local well-posedness of~\eqref{equa-stoZak} and a blow-up alternative.

\begin{theorem} [Well-posedness up to maximal existence time]      \label{Thm-LWP}
	 Assume (H). Let $(X_0,Y_0)\in H^1\times L^2$.
	 Then, $\mathbb{P}$-a.s. there exist an $\{\mathscr{F}_t\}$-stopping time $\tau^*$
	 and a unique $\{\mathscr{F}_t\}$-adapted solution $(X,Y)\in C([0,\tau^*); H^1\times L^2)$
	 to the stochastic Zakharov system \eqref{equa-stoZak}
	 in the sense of Definition \ref{Def-weak-sol}.

	 Moreover,  $\bbp$-a.s.
we have either $\tau^*=\infty$, or
\begin{align} \label{blowup-alter}
		\lim_{t\to \tau^*} (\|X(t)\|_{H^1} + \|Y(t)\|_{L^2}) = \infty \ \ \text{if}\ \tau^*<\infty.
\end{align}
\end{theorem}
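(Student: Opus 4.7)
The plan is to combine a Doss--Sussmann type rescaling with a normal form argument and local smoothing estimates, and then patch together pathwise solutions on a sequence of small random intervals.

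\textbf{Step 1 (Rescaling).} Since $W_1$ takes values in the purely imaginary direction and the noise is in It\^o form with It\^o--Stratonovich corrector $-\imu \mu X \dd t$, the pathwise substitution $X(t) = e^{W_1(t)} U(t)$ (whose prefactor has modulus $1$) reduces the Schr\"odinger equation in \eqref{equa-stoZak} to
\begin{align*}
   \imu \p_t U + \Delta U = \Re(Y) U - 2\imu \na W_1 \cdot \na U - \imu(\Delta W_1 + |\na W_1|^2) U.
\end{align*}
Subtracting the stochastic convolution $\Psi(t) := -\imu \int_0^t e^{\imu (t-s)|\na|} \dd W_2(s)$ from $Y$ turns the wave equation into a deterministic wave equation for $Z := Y - \Psi$ driven by $|e^{W_1}U|^2 = |U|^2$. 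Thus, pathwise, we are led to a deterministic system for $(U,Z) \in C_t(H^1 \times L^2)$ with random, time-dependent, lower order coefficients built from $W_1$ and a random inhomogeneity built from $\Psi$.

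\textbf{Step 2 (Function spaces, normal form, and local smoothing).} I would work in a Banach space that combines the $X^{s,b}$ spaces at the critical Zakharov exponents from \cite{BC96,GTV97,CHN20} with lateral local smoothing spaces in the spirit of \cite{BIKT11}. The quadratic interaction $\Re(Y) U$ cannot be closed in $H^1 \times L^2$ at the level of energy estimates, so I would apply a normal form transformation $U \mapsto U + B(U, Y)$ using the non-resonance of the Schr\"odinger and wave phases to replace this term by cubic terms plus boundary contributions, which close in the above space. The new features relative to the deterministic problem are the first order potential $\na W_1 \cdot \na U$ and the zeroth order term $(\Delta W_1 + |\na W_1|^2) U$; these cannot be absorbed by Strichartz alone and are handled by the lateral local smoothing estimates, for which hypothesis (H), specifically $\sum_k \int \sup_y |\phi^{(1)}_k(r\vece_j + y)| \dd r < \infty$, is tailor-made.

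\textbf{Step 3 (Pathwise contraction on a random interval, iteration, and uniqueness).} On an interval $[0,\tau]$, define the stopping time
\begin{align*}
   \tau_1 := \inf\Bigl\{ t > 0 : N_1(W_1;t) + N_2(\Psi; t) \geq \eta \Bigr\} \wedge T,
\end{align*}
where $N_1,N_2$ are the norms measuring $W_1$ and $\Psi$ in the framework of Step~2 and $\eta > 0$ is chosen small relative to $\|(X_0,Y_0)\|_{H^1 \times L^2}$. On $[0,\tau_1]$ the random coefficients are uniformly small and the nonlinear map $(U,Z) \mapsto (U,Z)$ defined by Duhamel's formula combined with the normal form is a contraction, which yields a unique pathwise solution; inverting the rescaling and adding back $\Psi$ produces $(X,Y)$. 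Adaptedness follows because $\tau_1$ is an $\{\mathscr{F}_t\}$-stopping time and the fixed point depends measurably on $\omega$. Following the refined rescaling approach of \cite{Z18,Z19}, iterate by re-centering the rescaling at $(X(\tau_n),Y(\tau_n))$ and at the noise increments $W_j(\cdot) - W_j(\tau_n)$ to obtain a strictly increasing sequence $\tau_0 < \tau_1 < \tau_2 < \cdots$ and gluing gives a solution on $[0,\tau^*)$ with $\tau^* := \lim_n \tau_n$. Uniqueness propagates by applying the same contraction argument to the difference of two solutions on each step.

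\textbf{Step 4 (Blow-up alternative and main obstacle).} The alternative \eqref{blowup-alter} follows by contradiction: on the event $\{\tau^* < \infty,\ \limsup_{t \to \tau^*}(\|X(t)\|_{H^1} + \|Y(t)\|_{L^2}) < \infty\}$, the H\"older continuity of $W_1,W_2$ (finite $C^{1/2-}$ seminorms a.s.\ by Kolmogorov) combined with the uniform bound on $(X(\tau_n),Y(\tau_n))$ forces each increment $\tau_{n+1}-\tau_n$ to be bounded below by a strictly positive deterministic function of these quantities, preventing $\tau_n \to \tau^* < \infty$. The most delicate point of the whole argument is Step~2: designing a single function space in which the normal form corrected quadratic term, the Duhamel terms for the wave equation, and the first order perturbation $\na W_1 \cdot \na U$ all close simultaneously, while only using the $C^{1/2-}$ temporal regularity of the Brownian paths; it is precisely here that the incorporation of local smoothing into the standard Zakharov framework is essential, and it is what allows us to treat noise of much lower spatial regularity than in prior work on SNLS.
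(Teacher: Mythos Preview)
Your proposal matches the paper's proof in essentially every structural respect: the Doss--Sussmann rescaling $(X,Y)\mapsto(e^{-W_1}X,\,Y-\cT_\cdot(W_2))$, the normal form reduction of the Schr\"odinger part to remove the apparent derivative loss in $\Re(Y)U$, the use of lateral local smoothing norms (in the spirit of \cite{BIKT11}) precisely to close the first order perturbation $\nabla W_1\cdot\nabla U$ arising from the rescaling, a pathwise contraction on a random interval defined by smallness of noise increments, the refined rescaling iteration of \cite{Z18,Z19} to patch intervals, and the H\"older-continuity argument for the blow-up alternative.

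One technical point you should correct before implementing: the paper does \emph{not} use the Bourgain $X^{s,b}$ spaces you invoke from \cite{BC96,GTV97}. With $b>1/2$ those spaces require more temporal regularity than the $C^{1/2-}$ Brownian paths can supply (this is exactly the obstruction flagged in \cite{T22}), and with $b\le 1/2$ one loses the embedding into $C_tH^1$. The paper's functional framework is built instead directly from Strichartz norms (the endpoint $L^2_tL^6_x$ with an $\langle N\rangle$ weight) together with the local smoothing component $N^{3/2}\|P_{N,\vece_j}f\|_{L^{\infty,2}_{\vece_j}}$, square-summed over dyadic $N$; the normal form is then applied to the mild/Duhamel formulation rather than embedded in an $X^{s,b}$ setup. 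Everything else in your outline---including your identification of the first order term as the main new analytical difficulty and of the summability condition in (H) as being tailored to the $L^{1,\infty}_{\vece_j}$ norm of $b$---is exactly what the paper does.
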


\begin{remark}
	In the statement of Theorem~\ref{Thm-LWP} uniqueness means that for any $T \in (0,\infty)$ and any $\{\mathscr{F}_t\}$-stopping time $\tau < \tau^*$, the process $(X,Y)$ is the unique solution of~\eqref{equa-stoZak} in
	\begin{align*}
		(C([0,\tau \wedge T];H^1) \cap \X(0,\tau \wedge T)) \times C([0,\tau \wedge T];L^2)
	\end{align*}
	in the sense of Definition~\ref{Def-weak-sol}, where the function space $\X$ is introduced in~\eqref{eq:DefXYG}.
\end{remark}

\begin{remark}
\begin{enumerate}[leftmargin=*]
	\item To the best of our knowledge, Theorem~\ref{Thm-LWP} provides the first well-posedness result for the stochastic Zakharov system with multiplicative noise, in the physical dimension three, and in the energy space.
	\item The proof makes use of the refined rescaling approach, recently developed for the critical stochastic nonlinear Schr\"odinger equations, and the normal form method, employing both Strichartz and local smoothing estimates. We explain the key ideas of the proof in Subsection~\ref{subsec:key} below. In particular, our strategy also applies in the context of stochastic nonlinear Schr\"odinger equations, allowing to treat noise with less regularity than in the previous papers \cite{BRZ14,BRZ16,Z18,Z19},
as well as the case of infinite modes of noise.
\end{enumerate}
\end{remark}

Next, we consider the well-posedness below the ground state,
which is known as the sharp threshold for global well-posedness
in the deterministic case.

We recall from \cite[(2.9)]{GNW13} that the ground state minimizes the action functional
\begin{align} \label{J-Q-mini}
   J(Q) = \inf  \{ J(\vf): \vf \not = 0, K(\vf) =0 \},
\end{align}
where
\begin{align}   \label{J-def}
   J(Q):= E_S(Q) + M(Q)
\end{align}
with $E_S$ being the Hamiltonian for the cubic focusing NLS in \eqref{ES-Hamil},
$M$ the mass functional given by \eqref{Mass-def}, and
$K$ stands for the scaling derivative of the action $J$, defined by
\begin{align}  \label{K-def}
   K(\vf)
    : = \partial_\lbb |_{\lbb= 1} J(\lbb^{\frac 32} \vf(\lbb x))
      = \int |\na \vf|^2 - \frac 3 4 |\vf|^4 \dd x.
\end{align}

\begin{theorem} [Well-posedness below the ground state]  \label{Thm-GWP-Ground}
Assume (H). Let $(X_0, Y_0) \in H^1 \times L^2$
satisfy $E_Z(X_0,Y_0) M(X_0) < E_S(Q) M(Q)$.
Let $(X,Y)$ be the corresponding unique solution to \eqref{equa-stoZak} on $[0,\tau^*)$ from Theorem~\ref{Thm-LWP},
where $\tau^*$ is the maximal existence time.
Define the $\{\mathscr{F}_t\}$-stopping time
\begin{align} \label{sigma*-def}
   \sigma_* := \inf\{t>0: E_Z(X(t),Y(t)) M(X(t)) \geq E_S(Q) M(Q)\}.
\end{align}
Then, if $K(X_0)\geq 0$,
we have  $\tau^*\geq \sigma_*$, $\bbp$-a.s.,
that is,
$(X,Y)$ exists at least up to the stopping time $\sigma_*$.
\end{theorem}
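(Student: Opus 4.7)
The plan is to combine pathwise mass conservation, the variational characterization of the ground state from~\cite{GNW13}, a continuity bootstrap for $K(X(\cdot))$, and the blow-up alternative~\eqref{blowup-alter}. Since $\sigma_*$ is defined pathwise through the continuous process $E_Z(X(t),Y(t))M(X(t))$, the entire argument is carried out on the full-probability event on which $(X,Y)\in C([0,\tau^*);H^1\times L^2)$, and the only stochastic input is the identity $M(X(t))=M(X_0)$ on $[0,\tau^*)$, a.s. This follows by It\^o's formula applied to $\|X\|_{L^2}^2$: the quadratic-variation correction $2\mu\|X\|_{L^2}^2\,\dd t$ produced by the purely imaginary noise $X\,\dd W_1$ is cancelled exactly by the drift coming from $-\imu\mu X\,\dd t$, the term $\imu\Re(Y)X$ has vanishing real part after pairing with $\bar X$, and $\int 2\Re(\imu\bar X\Delta X)\,\dd x=0$. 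The degenerate case $X_0=0$ is trivial, so we may assume $X_0\neq 0$ and hence $M(X(t))=M(X_0)>0$ on $[0,\tau^*)$.

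The key algebraic ingredient is $E_Z(u,v)=E_S(u)+\tfrac14\|v+|u|^2\|_{L^2}^2\geq E_S(u)$, from~\eqref{EZ-Hamil}, combined with the scale-invariant Kenig-Merle-type reformulation of~\eqref{J-Q-mini},
\begin{align*}
E_S(Q)M(Q)=\inf\bigl\{E_S(\varphi)M(\varphi):\varphi\in H^1\setminus\{0\},\,K(\varphi)\leq 0\bigr\},
\end{align*}
which follows from~\eqref{J-Q-mini} via the mass-preserving rescaling $\varphi\mapsto\lambda^{3/2}\varphi(\lambda\,\cdot)$. This already forces $K(X_0)>0$ under our assumptions, since $K(X_0)=0$ with $X_0\neq 0$ would give the contradiction $E_S(Q)M(Q)\leq E_S(X_0)M(X_0)\leq E_Z(X_0,Y_0)M(X_0)<E_S(Q)M(Q)$. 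Suppose for contradiction that
\begin{align*}
t^*:=\inf\bigl\{t\in[0,\tau^*\wedge\sigma_*):K(X(t))\leq 0\bigr\}<\tau^*\wedge\sigma_*.
\end{align*}
By continuity of $t\mapsto K(X(t))$ (from $X\in C([0,\tau^*);H^1)$ and the Sobolev embedding $H^1\hookrightarrow L^4$ in $\bbr^3$), $K(X(t^*))\leq 0$; mass conservation gives $X(t^*)\neq 0$; and since $t^*<\sigma_*$, the strict inequality $E_Z(X(t^*),Y(t^*))M(X(t^*))<E_S(Q)M(Q)$ still holds, so the same chain of estimates reproduces a contradiction. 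Hence $K(X(t))\geq 0$ throughout $[0,\tau^*\wedge\sigma_*)$.

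From $K(X(t))\geq 0$ one obtains the coercive bound $E_S(X(t))\geq\tfrac16\|\nabla X(t)\|_{L^2}^2$, so the definition of $\sigma_*$ and mass conservation yield
\begin{align*}
\|X(t)\|_{H^1}^2\leq 6E_Z(X(t),Y(t))+2M(X(t))<\frac{6E_S(Q)M(Q)}{M(X_0)}+2M(X_0)
\end{align*}
uniformly on $[0,\tau^*\wedge\sigma_*)$. The wave component is then controlled via
\begin{align*}
\|Y(t)\|_{L^2}\leq 2\sqrt{E_Z(X(t),Y(t))}+\||X(t)|^2\|_{L^2}\lesssim 1+\|X(t)\|_{H^1}^2,
\end{align*}
where $\||X|^2\|_{L^2}=\|X\|_{L^4}^2\lesssim\|X\|_{H^1}^2$ by Sobolev embedding. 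Thus $\|X(t)\|_{H^1}+\|Y(t)\|_{L^2}$ remains a.s. uniformly bounded on $[0,\tau^*\wedge\sigma_*)$; on the event $\{\tau^*<\sigma_*\}$ (where $\tau^*<\infty$ necessarily) this contradicts the blow-up alternative~\eqref{blowup-alter}, yielding $\tau^*\geq\sigma_*$ almost surely. The main, relatively mild, obstacle is the rigorous application of It\^o's formula to $\|X\|_{L^2}^2$ in the analytically weak setting of~\eqref{equa-stoZak-def}, which should proceed by a mollification argument within the functional framework of Theorem~\ref{Thm-LWP}; beyond that, the argument is essentially the deterministic one of~\cite{GNW13}, the essential new structural point being that additive noise in the wave equation is harmless here precisely because $\sigma_*$ is tailored to the quantity $E_ZM$ whose deterministic conservation it replaces by a stopping-time cutoff.
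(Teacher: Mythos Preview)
Your overall strategy --- mass conservation, the ground-state variational input, a continuity bootstrap to keep $K(X(t))$ positive, then the coercive bound $E_S\geq\tfrac16\|\nabla X\|_{L^2}^2$ combined with the blow-up alternative --- is correct and is essentially the paper's proof. There is, however, one misstated claim that would break the argument as written. The variational identity
\[
E_S(Q)M(Q)=\inf\bigl\{E_S(\varphi)M(\varphi):\varphi\in H^1\setminus\{0\},\ K(\varphi)\leq 0\bigr\}
\]
is false: for any $\varphi$ with $K(\varphi)<0$, the mass-preserving scaling $\varphi_\lambda=\lambda^{3/2}\varphi(\lambda\,\cdot)$ keeps $K(\varphi_\lambda)<0$ for all $\lambda\geq 1$ while $E_S(\varphi_\lambda)=\tfrac{\lambda^2}{2}\|\nabla\varphi\|_{L^2}^2-\tfrac{\lambda^3}{4}\|\varphi\|_{L^4}^4\to-\infty$, so the right-hand infimum is $-\infty$. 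The correct constraint is $K(\varphi)=0$, under which the inequality $E_S(\varphi)M(\varphi)\geq E_S(Q)M(Q)$ does hold (it follows from~\eqref{Jlbb-variat} after minimizing $J_\lambda(\varphi)=E_S(\varphi)+\lambda^2 M(\varphi)\geq 2\lambda\sqrt{E_S(\varphi)M(\varphi)}$ over $\lambda>0$, using $E_S(\varphi)=\tfrac16\|\nabla\varphi\|_{L^2}^2>0$ on the constraint). Fortunately your continuity argument already delivers $K(X(t^*))=0$, not merely $\leq 0$: since $K(X(t))>0$ for all $t<t^*$, continuity from the left forces $K(X(t^*))\geq 0$. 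With this correction the contradiction chain is valid and the rest of the proof stands.

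For comparison, the paper packages the same variational step slightly differently: rather than the scale-invariant product $E_S M$, it fixes the concrete parameter $\lambda_*=J(Q)/(2M(X_0))$ and works with $J_{\lambda_*}=E_S+\lambda_*^2 M$ and~\eqref{Jlbb-variat} directly, via the identity $E_Z+\lambda_*^2 M-\lambda_* J(Q)=M(X_0)^{-1}\bigl(E_Z\, M(X_0)-E_S(Q)M(Q)\bigr)$. The two formulations are equivalent (yours is the paper's optimized over $\lambda$) and both feed into the same coercivity bound and blow-up alternative.
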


\begin{remark}
In the deterministic case the conservation of mass and energy shows that $\sigma_* = \infty$
if \linebreak $E_Z(u_0,v_0) M(u_0) < E_S(Q) M(Q)$, implying global existence below the ground state
(cf. \cite{GNW13}).
However, the conservation of the energy breaks down
in the stochastic case due to the presence of the It\^o type stochastic integrals.
In view of the possible blow-up phenomena above the ground state in the deterministic case,
we would expect $\sigma_*<\infty$.
This motivates the study of the regularization by noise effect on well-posedness below.
\end{remark}

It is widely expected in the probability community that
suitable noise may improve the well-posedness and long-time behavior of
deterministic systems, which is called the \emph{regularization by noise} effect.
We refer to \cite{FGP10} where regularization by noise was proved for
uniqueness of transport equations,
and to \cite{FL21} for the regularization effect of transport noise
on vorticity blow-up in 3D Navier-Stokes equations.

For stochastic Schr\"odinger equations,
the numerical experiments in \cite{DL02,DL02.2} show that
conservative smooth multiplicative noise can delay blowup,
while white noise can even prevent blowup.
In the case of non-conservative noise,
the noise is able to prevent blow-up with high probability \cite{BRZ14.3},
and it can also improve the scattering behavior of solutions \cite{HRZ18}.

Regarding the Zakharov system,
the existence of finite time blow-up solutions was proven in~\cite{GM94} in dimension $d = 2$
and conjectured in \cite{M96} in dimension $d = 3$.
Numerical evidence for finite time blow-up solutions was found in \cite{LPSSW92,PSSW91}.
In Theorem~\ref{Thm-Noise-Reg} below,
we show the regularization effect of non-conservative noise on the blow-up behavior of solutions
of the stochastic Zakharov system.

\begin{theorem} [Noise regularization on well-posedness]   \label{Thm-Noise-Reg}
Consider the stochastic Zakharov system \eqref{equa-stoZak},
where the driving noise $W_1$ is a one-dimensional Brownian motion
with non-zero imaginary part,
i.e., $\phi_1^{(1)}$ is a constant, $\Im \phi_1^{(1)} \not = 0$,
$\phi_k^{(1)} = 0$ for $2\leq k<\infty$,
and $W_2 \equiv 0$.
Then, for any deterministic initial data
$(X_0, Y_0) \in H^1 \times L^2$ and any $0<T<\infty$,
we have
\begin{align}
\bbp\(X(t) \text{ does not blow up on } [0,T]\) \longrightarrow 1,\ \ \text{as} \ \phi_1^{(1)}\to \infty.
\end{align}
\end{theorem}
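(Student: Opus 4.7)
The plan is to exploit the non-conservativity of the noise (captured by $\Im\phi_1^{(1)}\not\equiv 0$) through a pathwise rescaling that turns the stochastic Schr\"odinger equation into a dissipative one with a random potential, and then invoke dispersive estimates for the resulting coupled wave-Schr\"odinger system. Writing $\phi := \phi_1^{(1)}$ and setting $X(t,x)=e^{i\phi(x)\beta(t)}Z(t,x)$, a short It\^o computation (using $\mu = \tfrac12|\phi|^2$) cancels the stochastic integral and yields the noise-free system
\[
\partial_t Z = i\Delta Z - 2\beta\,\nabla\phi\cdot\nabla Z - (\Im\phi)^2\,Z + (\text{lower order in }\beta)\,Z - i\,\Re(Y)\,Z,
\]
\[
\tfrac{i}{\alpha}\partial_tY + |\nabla|Y = -|\nabla|\bigl(e^{-2\Im\phi(x)\beta(t)}|Z(t,x)|^2\bigr),
\]
in which the real, sign-definite potential $-(\Im\phi)^2$ (arising from the combination $-\mu + \tfrac12\phi^2 + i|\nabla h|^2$) is the geometric Brownian motion that decays exponentially fast mentioned in the introduction. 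As $\phi\to\infty$ (in any scaling where $|\Im\phi|\to\infty$ pointwise), this potential strongly damps $Z$ and the random prefactor $e^{-2\Im\phi\beta}$ forces the wave source to be small, so that $Y$ becomes essentially the free wave evolution of $Y_0$.

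The analytic heart of the proof is a Strichartz estimate for the Schr\"odinger operator with a time-dependent potential solving the free wave equation: given $V \in C([0,T];H^1)$ with $(i\alpha^{-1}\partial_t + |\nabla|)V=0$ and an admissible Strichartz pair $(q,r)$, I would establish
\[
\|u\|_{L^\infty([0,T];H^1) \cap L^q([0,T];L^r)} \le C(T,\|V(0)\|_{H^1},\alpha)\,\|u_0\|_{H^1}
\]
for solutions of $i\partial_t u + \Delta u = (\Re V)\,u$ with $u(0)=u_0 \in H^1$. My approach is a $TT^*$/bilinear argument combining the Strichartz and local smoothing estimates for the Schr\"odinger group with dispersive $L^p_tL^q_x$ and local smoothing estimates for the wave group; the latter provides the space-time integrability of $V$ required to treat the coupling $(\Re V)u$ perturbatively. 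The $H^1$-regularity assumed on $Y_0$ (stronger than the $L^2$ in Theorem~\ref{Thm-LWP}) is precisely what gives $V$ enough dispersive regularity for this estimate to close.

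Given these ingredients I would conclude pathwise. On the event $A_M := \{\sup_{[0,T]}|\beta(t)|\le M\}$, whose probability tends to $1$ as $M\to\infty$, the drift $-2\beta\nabla\phi\cdot\nabla$ and the $\beta$- and $\beta^2$-lower-order terms in the $Z$-equation are bounded uniformly on $[0,T]$, while the Strichartz estimate above (applied after decomposing $Y$ into its free evolution plus a small Duhamel perturbation) yields an a priori bound on $Z$ in $L^\infty([0,T];H^1)$ whose only $\phi$-dependence comes from terms rendered small by the dissipative potential $-(\Im\phi)^2$. A bootstrap then closes and produces a uniform $H^1$ bound on $Z$ on $[0,T]\cap A_M$ for all sufficiently large $\phi$. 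Inverting the rescaling gives a uniform $H^1$ bound on $X$ on $A_M$, which together with the blow-up alternative of Theorem~\ref{Thm-LWP} precludes blow-up on $[0,T]\cap A_M$. Sending $\phi\to\infty$ first and $M\to\infty$ afterwards yields the claimed convergence of probabilities. The principal obstacle is constructing the Strichartz estimate for the Schr\"odinger operator with a free-wave potential; the remaining steps reduce to a careful bootstrap combined with standard probabilistic bookkeeping.
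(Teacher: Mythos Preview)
Your high-level plan matches the paper's---pathwise rescaling, Strichartz estimates for the Schr\"odinger equation with a free-wave potential, and damping from the non-conservative noise---but the mechanism that turns the damping into smallness is misidentified, and your probabilistic step does not close. The rescaling $X=e^{i\phi\beta}Z$ stops one step short: it leaves the constant potential $-(\Im\phi)^2$ in the Schr\"odinger equation and the prefactor $e^{-2\Im\phi\beta}$ in the wave source, neither of which is the ``geometric Brownian motion'' of the introduction (the prefactor alone is a martingale with mean $e^{2(\Im\phi)^2 t}$, hence not small). The paper makes the further substitution $z=e^{\hat\mu t}Z$ with $\Re\hat\mu=(\Im\phi)^2$, which removes the potential from the Schr\"odinger equation entirely and merges the two effects into the single scalar $h(t)=e^{-2\Im\phi\,\beta(t)-2(\Im\phi)^2 t}$ multiplying the wave nonlinearity. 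The fixed point for $(z,v_2)$ then closes on all of $[0,T]$ as soon as $\|h\|_{L^4(0,T)}$ is below a threshold depending only on $T,\|X_0\|_{H^1},\|Y_0\|_{H^1}$, and the smallness of $\|h\|_{L^4}$ comes from Brownian scaling: $\|h\|_{L^4(0,\infty)}^4\stackrel{d}{=}(\Im\phi)^{-2}\int_0^\infty e^{-8\beta(s)-8s}\,ds$, where the integral is a.s.\ finite and $\phi$-independent.

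Your conditioning on $A_M=\{\sup_{[0,T]}|\beta|\le M\}$ cannot replace this scaling argument. Kept in your formulation, the potential $-(\Im\phi)^2$ treated as a right-hand side contributes a term of order $(\Im\phi)^2 T\|Z\|$ to the Duhamel estimate and blows up, while absorbed into the propagator it does not improve Strichartz norms. Even after combining it with the wave prefactor into $h$, the best pathwise bound on $A_M$ is $\|h\|_{L^4(0,T)}^4\lesssim e^{8|\Im\phi|M}(\Im\phi)^{-2}\to\infty$ as $\phi\to\infty$ with $M$ fixed, so there is nothing small to feed a bootstrap while sending $\phi\to\infty$ first. Two smaller corrections: in this theorem $\phi_1^{(1)}$ is a spatial constant (this is precisely what makes Brownian scaling available), so your $\nabla\phi$ drift terms vanish identically; and the paper obtains the Strichartz estimate with free-wave potential by elementary perturbation and iteration over short subintervals using only $\|v_1\|_{L^\infty_t H^1_x}=\|Y_0\|_{H^1}$, not a $TT^*$/bilinear argument using wave dispersion.
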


\begin{remark}
\begin{enumerate}[leftmargin=*]
\item Unlike in Theorems \ref{Thm-LWP} and \ref{Thm-GWP-Ground},
the noise considered in Theorem \ref{Thm-Noise-Reg} is of {\it non-conservative type},
i.e., $\Im W^{(1)} \not = 0$.
The non-conservative noise permits to define
the so-called ``physical probability law''
which has important applications
in quantum measurements,
see, e.g., the monograph \cite{BG09} for the physical context.
Theorem \ref{Thm-Noise-Reg} shows that even for deterministic data
which leads to a singularity in finite time,
the one-dimensional non-conservative noise
can prevent blow-up on bounded time intervals with high probability,
as long as the noise is strong enough.

\item This regularization by noise effect
relies on the observation
that the non-conservative noise gives rise to a geometrical Brownian motion
in the nonlinearity
which has exponential decay at infinity and thus is able to weaken the nonlinearity.
The key step in order to explore this effect
is to prove estimates in Strichartz spaces for a Schr\"odinger type equation with a nonlocal potential involving the free wave $v_1:=e^{\imu t|\na|}Y_0$ and a boundary term operator that comes from the normal form transform, see~\eqref{equa-z-Omegav2z-mild} below for the precise formulation.

The global uniform Strichartz estimate for the Schrödinger operator with free wave potential \linebreak $\partial_t - \imu \Delta + \imu v_1$ is the key ingredient in \cite{CHN21} in proving the global well-posedness below the ground state for the deterministic problem in dimension $d = 4$.
Here, for solutions of the Schrödinger part of~\eqref{equa-z-Omegav2z-mild}, we prove estimates in Strichartz spaces on bounded time intervals using perturbation arguments,
which suffices for the finite time analysis.
\end{enumerate}
\end{remark}

\paragraph{\bf Organization of the paper}
In Section~\ref{sec:mot}, a heuristic derivation of the stochastic Zakharov system and the strategy of the proof is discussed.
Section~\ref{Sec-DS-transf} is mainly concerned with the refined rescaling transformations.
We prove the equivalence of solving the stochastic and random Zakharov systems,
and also show how solutions can be glued together.
Section~\ref{Sec-Normal-Form} is devoted to the normal form transform. In Section~\ref{Sec-Multi-Esti}, we provide the functional framework for the proof of Theorem~\ref{Thm-LWP} and provide corresponding multi-linear estimates.
In Section~\ref{Sec-WP-Max-Time} and Section~\ref{Sec-WP-Ground} we give the proofs of
Theorems \ref{Thm-LWP} and \ref{Thm-GWP-Ground},
respectively.
In Section~\ref{Sec-Noise-Reg}
we prove Theorem~\ref{Thm-Noise-Reg} concerning the noise regularization effect on well-posedness.
Finally, we prove the equivalence of different solution concepts we use in this article, i.e. weak solutions, mild solutions, and normal form solutions, in
Appendix~\ref{App-Weak-Mild} and Appendix~\ref{App-Mild-Normal}.

\section{Motivation}\label{sec:mot}

\subsection{Heuristic derivation of the stochastic Zakharov system}  \label{subsec:heur}
We recall that the Zakharov system was introduced in~\cite{Z72} as a model in plasma physics to describe rapid oscillations of the electric field in a non- or weakly magnetized plasma.
A more formal derivation using multiple-scale modulational analysis is presented in~\cite[Chapter 13]{SS99}, see also~\cite{T07,CEGT07}.

The starting point for the model is to consider the plasma as two interpenetrating fluids, the ion fluid and the electron fluid, each described by a set of hydrodynamic equations. Since the fluids consist of charged particles, they interact with the electromagnetic fields in the plasma, which evolve via Maxwell's equations. Consequently, the two-fluid model is described by an Euler-Maxwell system, where Euler's and Maxwell's equations are coupled via the current density (given as the sum of the products of charge, number density, and velocity field of the fluids) in the Maxwell equations, while the gradient of the pressure term in the Euler equations is complemented by the Lorentz force, see~\cite{SS99}. Considering long wavelength small-amplitude Langmuir waves, one can reduce the rather complex Euler-Maxwell system to the Zakharov system, which reads in its scalar version
 \begin{equation}   \label{eq:ZakharovSystem}
	\left\{\aligned
	 \imu \partial_t u + \Delta u &= V u, \\
     \frac{1}{\alpha^2}\partial_t^2 V -  \Delta V &=  \Delta |u|^2.
	\endaligned
	\right.
\end{equation}
We refer again to~\cite[Chapter~13]{SS99} for this derivation, see also~\cite{T07}. In~\eqref{eq:ZakharovSystem}, $u \colon \R \times \R^3 \rightarrow \C$ describes the complex envelope of the electric field,
$V \colon \R \times \R^3 \rightarrow \R$ is the ion density fluctuation,
and the fixed constant $\alpha > 0$ is called the ion sound speed.

To be more precise, making the modulational ansatz
\begin{align*}
	E &= \frac{\ep}{2} (\cE(\hat{t},\tilde{x}) e^{-\imu \omega_e t} + \textrm{c.c.}) + \ep^2 \overline{\cE}(\hat{t},\tilde{x}) + \ldots, \\
	n_i &= n_0 + \frac{\ep^2}{2} (\tilde{n}_i(\tilde{t},\tilde{x}) e^{-\imu \omega_e t} + \textrm{c.c.}) + \ep^2 \on_i(\tilde{t},\tilde{x}) + \ldots
\end{align*}
with $\tilde{x} = \ep x$, $\tilde{t} = \epsilon t$, $\hat{t} = \epsilon^2 t$, and $\omega_e$ being the plasma frequency as in~\cite{SS99}, $u$ represents (one component of) the amplitude $\cE$ and $V$ the mean ion density fluctuation $\on_i$. Note that in the above ansatz it is assumed that the unperturbed plasma density $n_0$ is known. This leads to the question how the model is affected if there is insufficient information on or internal random fluctuations in $n_0$. Taking into account this uncertainty and replacing $n_0$ by $n_0 + \epsilon^2 \dot{W}_1(\tilde{t},\tilde{x})$, it turns out that we have to replace $V$ by $V + \dot{W}_1$ on the right-hand side of the Schr{\"o}dinger equation in~\eqref{eq:ZakharovSystem}. We obtain
\begin{align*}
		\imu \partial_t u + \Delta u &= (V + \dot{W}_1) u = V u + u \dot{W}_1,
\end{align*}
suggesting to consider the Schr{\"o}dinger part of the Zakharov system with linear multiplicative noise.

Another source of uncertainty is the plasma temperature. In the long-wavelength limit it is assumed that the compression of the wave is adiabatic, so that we obtain for the pressure on the right-hand side of the Euler equation for the ion fluid $\nabla p_i = \gamma_i T_i \nabla n_i$, where $\gamma_i$ denotes the specific heat ratio of the ions and $T_i$ the ion temperature. Modeling thermal fluctuations by $T_i + \epsilon^2 c(\tilde{x}) \dot{\beta}(\tilde{t})$, we obtain for the right-hand side of the wave equation in~\eqref{eq:ZakharovSystem} $\Delta |u|^2 + \Delta c \dot{\beta}$. We refer to~\cite{SS99} for the details.

Summing up, we propose to model thermal fluctuations in the plasma by additive noise in the wave equation, leading to
\begin{align}
\label{eq:ZakSecOrderStoch}
	\left\{\aligned
	 \imu \partial_t u + \Delta u &= V u + \dot{W}_1 u, \\
     \frac{1}{\alpha^2}\partial_t^2 V -  \Delta V &=  \Delta |u|^2 + \dot{W}_2,
	\endaligned
	\right.
\end{align}
with space-time noise $\dot{W}_1, \dot{W}_2$.

Finally, we note that the Zakharov system has an equivalent first-order formulation obtained by setting $v:=V-\imu \frac 1 \alpha |\na|^{-1} \partial_t V$. In the deterministic case this transformation leads to
\begin{equation}  \label{eq:ZakharovSystemFirstOrder}
        \left\{\aligned
         \imu \partial_t u + \Delta u &= \Re(v) u, \\
         \frac 1 \alpha \imu \partial_t v + |\nabla| v &= - |\nabla| |u|^2,
\endaligned
\right.
 \end{equation}
while we get
 \begin{equation}  \label{eq:ZakharovSystemStochFirstOrder}
        \left\{\aligned
         \imu \partial_t u + \Delta u &= \Re(v) u + \dot{W}_1 u, \\
         \frac 1 \alpha \imu \partial_t v + |\nabla| v &= - |\nabla| |u|^2 + |\na|^{-1} \dot{W}_2,
\endaligned
\right.
 \end{equation}
 in the stochastic case. Writing $\dot{W}_2$ for the noise in the wave equation again and $(X,Y)$ instead of $(u,v)$, we arrive at~\eqref{equa-stoZak} as a stochastic model for the Zakharov system.

\subsection{Key ideas of the proof}\label{subsec:key}

In the following we present three main ingredients of the proofs of our main theorems.

\medskip

{\bf (i) Refined rescaling transformations.}
The rescaling transformation is a Doss-Sussman type transformation
first developed for
finite dimensional stochastic differential equations,
and now has been applied successfully to various infinite dimensional stochastic equations.
We refer to \cite{ABD21} for the stochastic Camassa-Holm equation
and to \cite{BRZ16.1,BRZ18,DF12, Z17,Z19} for the SNLS.
One main feature of the rescaling transformation
is
that it permits to
transform the original stochastic equation to a random equation,
and thus allows sharp analysis in a {\it pathwise} way,
which is difficult for the usual stochastic analysis.
In particular,
the resulting solutions depend continuously on the initial data pathwisely
and satisfy the strict cocycle property. This leads to stochastic dynamical systems which are favorable for the analysis
of large time dynamics, including the scattering and blow-up behavior and the study of solitons.
We refer to \cite{HRZ18,RSZ21,RSZ21.2,SZ19,SZ20,Z18} for corresponding references.

Unlike in \cite{BRZ14,BRZ16} for the Schr\"odinger equation,
we perform two different rescaling transforms in order to respect the different types of noise in the stochastic Zakharov system~\eqref{equa-stoZak}.
More precisely,
for the Schr\"odinger component,
we use the  transform
\begin{align}  \label{rescal}
     u = e^{-W_1}X,
\end{align}
and for the wave component we  use the transform
\begin{align}
	v(t):= Y(t) - \cT_{t}(W_2) \ \
	\text{with} \ \cT_t(W_2):= - \imu \int_0^t e^{\imu (t-s)|\na|} \dd W_2(s).
\end{align}

Then,
the original stochastic Zakharov system \eqref{equa-stoZak}
can be reduced to the random system
\begin{equation}   \label{equa-ranZak}
	\left\{\aligned
	  \imu \partial_t u + e^{-W_1}\Delta (e^{W_1} u) &= \Re(v) u + \Re(\cT_t(W_2)) u,  \\
	  \imu \partial_t v + |\na |v  &= - |\na||u|^2, \\
    (u(0), v(0)) &= (X_0, Y_0),
	\endaligned
	\right.
\end{equation}
or, equivalently, the system with random perturbations
\begin{equation}   \label{equa-ranZak-bc}
	\left\{\aligned
	  \imu \partial_t u + \Delta u
	   &= \Re(v) u - b\cdot \na u - cu + \Re(\cT_t(W_2)) u,  \\
	 \imu \partial_t v + |\na |v  &= - |\na||u|^2, \\
    (u(0), v(0)) &= (X_0, Y_0),
	\endaligned
	\right.
\end{equation}
where the random coefficients of the lower order perturbations are defined by
\begin{align}
   b &= 2 \na W_1 = 2 \imu \sum\limits_{k=1}^\infty \na \phi^{(1)}_k \beta^{(1)}_k,  \label{b-def}  \\
   c &= |\na W_1|^2 + \Delta W_1
     = - \sum\limits_{j=1}^{3} \(\sum\limits_{k=1}^\infty
	     \partial_j \phi_k^{(1)} \beta^{(1)}_k \)^2
        + \imu \sum\limits_{k=1}^\infty \Delta \phi^{(1)}_k \beta^{(1)}_k.   \label{c-def}
\end{align}
The solutions to the random Zakharov system \eqref{equa-ranZak}
(or, \eqref{equa-ranZak-bc})
are taken in the analytically weak sense,
analogous to the stochastic case \eqref{equa-stoZak-def} in Definition \ref{Def-weak-sol} above.

The relation between the two systems~\eqref{equa-stoZak} and~\eqref{equa-ranZak-bc}
can be seen by a formal application of It\^o's formula.
However, the rigorous justification of the application of It\^o's formula
is non-trivial in the infinite dimensional case,
due to the failure of the $C_b^2$-regularity of the nonlinearity
(see also \cite{BRZ14} for detailed explanations in the context of SNLS).
The rigorous proof for the equivalence of~\eqref{equa-stoZak} and~\eqref{equa-ranZak-bc}
is given in Section~\ref{Sec-DS-transf}.

In order to deal with the first order perturbation, we will employ local smoothing estimates for the Schr\"odinger equation (see (ii) below). In the resulting functional framework the deterministic estimates for the first order perturbation do not gain a power of the length of the time interval. In order to extend solutions up to the maximal existence time, we instead exploit that the variation of the noise is small on small time intervals.
This is done by means of
the refined rescaling transformations, 
recently developed in the context of critical stochastic nonlinear Schr\"odinger equations \cite{Z18}, where the deterministic estimates also do not gain a power of the length of the time interval.

More precisely,
for any  stopping time $\sigma$
we define the increments of the noise by
\begin{align*}
    W_{1,\sigma}(t) &:= W_1(\sigma+t) - W_1(\sigma),    \\
    \cT_{\sigma+t, \sigma} (W_2)
     &:= - \imu \int_\sigma^{\sigma+t} e^{\imu(\sigma+t-s)|\na|} \dd W_2(s).
\end{align*}
Using the transformations
\begin{align*}
	 u _\sigma(t) &:= e^{W_1(\sigma)} u(\sigma+t), \\
     v_\sigma(t) &:= v(\sigma+t) + e^{\imu t|\na|}\cT_\sigma(W_2),
\end{align*}
where $0\leq t\leq \tau$ with $\tau$ being an $\{\mathscr{F}_{\sigma+t}\}$-stopping time,
we transform the random Zakharov system \eqref{equa-ranZak-bc} to
the new system
\begin{equation}   \label{equa-Zakha-sigma-intro}
	\left\{\aligned
	  \partial_t u_\sigma(t) &=  \imu e^{-W_{1,\sigma}(t)} \Delta (e^{W_{1,\sigma}(t)} u_\sigma(t))
	- \imu \Re(v_\sigma(t)) u_\sigma(t) 
	- \imu \Re( \cT_{\sigma+t, \sigma}(W_2)) u_\sigma(t),   \\
	  \partial_t v_\sigma(t) &=   \imu  |\na| v_\sigma(t) + \imu |\na| |u_\sigma(t)|^2.
	\endaligned
	\right.
\end{equation}
Note that~\eqref{equa-Zakha-sigma-intro} reduces to~\eqref{equa-ranZak-bc} if $\sigma \equiv 0$.

The functional framework we set up below will require the $L^\infty$-norm in time of the noise to be small on small time intervals. This is ensured by the refined rescaling transform, since $W_{1,\sigma}(0) = 0$ and $W_{1,\sigma}$ is continuous.
In the proof we will iteratively construct sequences $(\sigma_n)_{n \in \N}$ and $(\tau_n)_{n \in \N}$ of $\{\mathscr{F}_{t}\}$ and $\{\mathscr{F}_{\sigma_n+t}\}$ stopping times, respectively,
in order to split the trajectory of the noise.
On each small random interval $[\sigma_n, \sigma_n+\tau_{n+1}]$ we can then construct 
the local solution.

Probabilistic arguments involving the H\"older continuity of the noise
will then be applied to glue together all these local solutions, providing the solution
up to the maximal existence time
and the blow-up alternative.
The detailed arguments are contained in Section~\ref{Sec-DS-transf}.

\medskip

{\bf (ii) Normal form method and local smoothing.}
Solving the random system~\eqref{equa-ranZak-bc} in the energy space $H^1 \times L^2$, one of the main difficulties is the regularity of the nonlinearity. At first sight, there seems to be a derivative loss as we have to control the Schr{\"o}dinger component in $H^1$, but in the nonlinearity $\Re(v) u$ the wave component only belongs to $L^2$. This problem already appears in the deterministic system~\eqref{eq:ZakharovSystemFirstOrder}, and one tool to overcome it is the so called normal form transform.

The normal form method was originally introduced by Shatah~\cite{S85} to analyze the long time behavior of quadratic nonlinear Klein-Gordon equations. The idea was to transform the equation in such a way that the order of the nonlinearity increases, as higher order nonlinearities are favorable for the long time analysis using perturbation techniques. Nowadays, the normal form method is widely used to study both the long time behavior and the low regularity well-posedness of dispersive PDEs.

For the Zakharov system~\eqref{eq:ZakharovSystemFirstOrder}, the normal form transform was introduced by Guo and Nakanishi in~\cite{GN14} in order to show scattering for small, radial initial data in the energy space. It was subsequently used to advance the understanding of both the low regularity well-posedness and the long-time behavior, see~\cite{BGHN15,GNW13,GLNW14,G16,S22}.

The underlying idea is to use the resonance structure of the system to reveal subtle features of the nonlinear interactions. To be more precise, an integration by parts argument on the Fourier side transforms the equation in such a way that only low-high and high-high interactions of the quadratic nonlinearity remain. Consequently, derivatives only fall on the Schr{\"o}dinger component of the quadratic nonlinearity, showing that a loss of derivatives in the nonlinearity $\Re(v)u$ can be avoided.
We refer to Section~\ref{Sec-Normal-Form} for the details, where we perform the normal form transform for the random system~\eqref{equa-ranZak-bc}.

We point out that the previously discussed rescaling transforms, allowing us to analyze the stochastic Zakharov system~\eqref{equa-stoZak} pathwisely via the random system~\eqref{equa-ranZak-bc}, are the reason that we can employ analytical tools such as the normal form method, ultimately giving us access to the rather low regularity setting of the energy space.

For later reference we note that~\eqref{equa-ranZak-bc} after the normal form transform reads
\begin{equation*}
	\left\{\aligned
	(\partial_t-\imu \Delta) (u+\bdyop(v,u))&=
	   -\imu (vu)_{R} + \imu (b\cdot \na u + cu - \cT_t(W_2)u)
	   + \imu \bdyop(|\na||u|^2, u)
	   - \imu \bdyop(v,vu) \\
	   &\qquad + \imu \bdyop(v,b\cdot \na u + cu),  \\
	(\partial_t - \imu |\na|) v &= \imu |\na||u|^2,
	\endaligned
	\right.
\end{equation*}
where $\bdyop$ is a bilinear Fourier multiplier operator related to quadratic high-low interactions and $(vu)_R$ contains the possibly resonant interactions of $vu$,
see~\eqref{eq:DefR} and~\eqref{Omegab-def} below for the precise definitions. When working with the normal form transform, we also drop the real parts in~\eqref{equa-ranZak-bc} for convenience, see Remark~\ref{rem:DroppingRealPart}.

Another regularity issue is posed by the first order perturbation $b \cdot \na u$
on the right-hand side of~\eqref{equa-ranZak-bc} as $\nabla u$ only belongs to $L^2$
but $u$ must be controlled in $H^1$. This problem is new since these lower order perturbations,
generated by the noise, do not appear in the deterministic setting.
We deal with the $b \cdot \na u$ term by employing local smoothing estimates for the Schr{\"o}dinger equation.
To make optimal use of the local smoothing effect,
we adapt an approach developed in~\cite{BIKT11} for the Schr{\"o}dinger maps problem,
setting up a functional framework combining Strichartz and local smoothing estimates,
see Subsection~\ref{Subsec-Funct} below.

\medskip

{\bf (iii) Further reduction for noise regularization.}
The structural difference between the stochastic Zakharov systems driven by conservative
or non-conservative noise is revealed by the rescaling transform.

For the non-conservative noise in Theorem \ref{Thm-Noise-Reg},
we introduce a new rescaling transformation by
\begin{equation}
	\left\{\aligned
	 z &:= e^{\wh \mu t - W_1(t)} X,  \\
	 v &:= Y,
	\endaligned
	\right.
\end{equation}
where
\begin{align}
	\wh \mu := \frac 12 (|\phi_1^{(1)}|^2 - (\phi_1^{(1)})^2).
\end{align}
Note that
in the current non-conservative case we have
\begin{align}  \label{wtmu-Imphi2}
	 \Re \wh \mu
	 =  (\Im \phi_1^{(1)})^2 >0,
\end{align}
while $\Re \wh \mu = 0$ in the conservative case as in Theorem~\ref{Thm-LWP} and Theorem~\ref{Thm-GWP-Ground}.

The stochastic Zakharov system \eqref{equa-stoZak} can then be reduced to
\begin{equation}   \label{equa-ranZak-noncons}
	\left\{\aligned
	  \imu \partial_t z + \Delta z &= \Re(v)z,  \\
	 \imu \partial_t v + |\na |v  &= - h|\na||z|^2, \\
    (u(0), v(0)) &= (X_0, Y_0) \in H^1 \times H^1
	\endaligned
	\right.
\end{equation}
with $h$ being the geometric Brownian motion
\begin{align}  \label{h-W1-def}
	h(t):= e^{2 \Re (W_1(t)- \wh \mu t)}
    = e^{-2 \Im \phi_1^{(1)} \beta^{(1)}_1(t) - 2 (\Im \phi_1^{(1)})^2 t},
\end{align}
or equivalently in the normal form formulation
\begin{equation}   \label{equa-Zak-norm-noncons}
	\left\{\aligned
	  (\partial_t - \imu \Delta) ( z + \bdyop(v,z))
	&= - \imu (vz)_R  + \imu \bdyop(h|\na||z|^2, z)
	- \imu \bdyop (v,vz),  \\
	 (\partial_t - \imu |\na|) v &= \imu h|\na||z|^2.
	\endaligned
	\right.
\end{equation}

The key fact here is that,
because of the iterated logarithmic law of Brownian motion
\begin{align} \label{log-BM}
   \limsup\limits_{t\to \infty} \frac{\beta^{(1)}_1(t)}{\sqrt{2t \log\log t}} =1, \ \
    \liminf\limits_{t\to \infty} \frac{\beta^{(1)}_1(t)}{\sqrt{2t \log\log t}} = -1, \ \ \bbp\text{-a.s.},
\end{align}
the geometrical Brownian motion $h$ decays exponentially fast at infinity.
Heuristically,
we expect that it weakens the nonlinearity in the wave equation
and thus stabilizes the system.

In order to explore this damping effect,
it is important to
extract the free wave from
the Schr\"odinger equation.
More precisely, we
set
\begin{align} \label{eq:Decv1v2}
	v_1(t):= e^{\imu t|\na|} Y_0, \ \
	v_2:= v-v_1,
\end{align}
write~\eqref{equa-Zak-norm-noncons} in its mild formulation, and separate all free wave interactions which do not involve $v_2$. Writing $\cU$ and $\cI$ for the homogeneous and inhomogeneous flow operators of the free Schrödinger equation, respectively,  we obtain
\begin{equation}   \label{equa-z-Omegav2z-mild}
	\left\{\aligned
	 z(t) + \bdyop(v_1, z)(t) + \imu \cI_t(v_1 z)_R + \imu \cI_t \bdyop(v_1, v_1 z) &= \cU_t f - \bdyop(v_2, z)(t) - \imu \cI_t(v_2 z)_R + \imu \cI_t \bdyop(h |\nabla| |z|^2, z)\\
	&\qquad  - \imu \cI_t \bdyop(v_2, (v_1 + v_2) z) - \imu \cI_t \bdyop(v_1, v_2 z),  \\
	 v_2(t) &= \imu \int_0^t e^{\imu (t -s) |\nabla|} (h|\na||z|^2)(s) \dd s,
	\endaligned
	\right.
\end{equation}
where $f = X_0 + \bdyop(Y_0, X_0)$.
The key point is then to control solutions of the Schr\"odinger type equation with potential $v_1$ appearing on the left-hand side of the first equation in~\eqref{equa-z-Omegav2z-mild} in Strichartz spaces,
which is done in Lemma~\ref{Lem-Est-v1Pot}.

\section{Refined rescaling transformations}  \label{Sec-DS-transf}

In this section we perform the refined rescaling transformations,
which convert the original stochastic system into a random system,
allowing us to apply the normal form method in order to overcome the analytical difficulties inherent to the Zakharov system in the next section.

We start with the equivalence of the solvability of the two
systems~\eqref{equa-stoZak} and~\eqref{equa-ranZak}.

\subsection{Equivalence via rescaling transformations}

\begin{theorem} [Equivalence of the solvability via rescaling transformations]  \label{Thm-stocha-weak}

\begin{enumerate}
\item[]
\item \label{it:EquivRescStochToRandom} Let $(X,Y)$ be a solution to \eqref{equa-stoZak} on $[0,\tau]$
in the sense of Definition~\ref{Def-weak-sol},
where $\tau$ is an $\{\mathscr{F}_t\}$-stopping time
and $(X,Y) \in C([0,\tau]; H^1 \times L^2)$ $\bbp$-a.s.
Set $u: =e^{-W_1}X$ and $v:= Y - \cT_t(W_2)$.
Then,
$(u,v)$ is an analytically weak solution to~\eqref{equa-ranZak} on $[0,\tau]$
as equations in $H^{-1} \times H^{-1}$.

\item \label{it:EquivRescRandomToStoch} Let $(u,v)$ be an analytically weak solution to \eqref{equa-ranZak} on $[0,\tau]$
as equations in $H^{-1} \times H^{-1}$,
where $\tau$ is an $\{\mathscr{F}_t\}$-stopping time,
and $(u,v)$ is $\{\mathscr{F}_t\}$-adapted and continuous in $H^1\times L^2$.
Set $(X, Y): =(e^{W_1}u, v+\cT_t(W_2))$.
Then,
$(X, Y)$ is a solution of~\eqref{equa-stoZak} on $[0,\tau]$
in the sense of Definition~\ref{Def-weak-sol}.
\end{enumerate}
\end{theorem}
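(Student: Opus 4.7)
The plan is to verify that the two transformations are inverse to each other and that each converts a weak solution of one system into a weak solution of the other by a careful application of It\^o's formula. Since $W_1 = \sum_k \imu \phi_k^{(1)} \beta_k^{(1)}$ is purely imaginary (the $\phi_k^{(1)}$ are real), we have $|e^{W_1}|^2 = 1$ and the quadratic variation is $d[W_1, W_1] = -2\mu\, dt$. At a formal level this yields
\begin{equation*}
   d e^{-W_1} = -e^{-W_1}\, dW_1 - \mu e^{-W_1}\, dt,
\end{equation*}
so that the It\^o product rule combined with \eqref{equa-stoZak} and the cancellation of the martingale parts $\pm e^{-W_1}X\, dW_1$, together with the balance of the drift term $-\mu X\, dt$ and the cross-variation $-e^{-W_1}X \cdot d[W_1, W_1]$, produces the Schr\"odinger equation in~\eqref{equa-ranZak}. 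Substituting $Y = v + \cT_t(W_2)$ and using $|X|^2 = |u|^2$ gives exactly the potential $\Re(v) + \Re(\cT_t(W_2))$ on the right-hand side. For the wave component, $\cT_t(W_2)$ is a stochastic convolution satisfying $d\cT_t(W_2) = \imu|\nabla|\cT_t(W_2)\, dt - \imu\, dW_2$ by construction, so subtracting this from the wave equation in~\eqref{equa-stoZak} both removes the additive noise and replaces $Y$ with $v$ in the free propagation term, yielding the deterministic-looking wave equation for $v$.

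The main obstacle is making this It\^o computation rigorous: the solution $X$ is only $H^1$-valued, the nonlinearity lacks $C_b^2$-regularity, and the equation is posed in the dual space $H^{-1}$. To handle this, the plan is to test against $\varphi \in C_c^\infty(\bbr^3)$ and reduce to the real-valued semimartingale $t \mapsto \langle e^{-W_1(t,\cdot)}X(t), \varphi \rangle$, to which the classical (finite-dimensional) It\^o formula can be applied after expanding $e^{-W_1(t,x)}$ via its series representation. The summability hypothesis~\eqref{phik-condition} ensures that all cross-variations converge in the appropriate topologies. As an alternative (or complementary) device, one may truncate the noise to $W_1^{(N)} = \sum_{k=1}^N \imu \phi_k^{(1)} \beta_k^{(1)}$, apply It\^o in the resulting finite-dimensional setting, and pass to the limit $N \to \infty$ using the assumed regularity of $(X,Y)$ and the continuity of all terms in the equation. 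The $H^{-1}$ formulation of~\eqref{equa-stoZak-def} is crucial since it allows $\Delta X$ and $|\nabla| Y$ to be paired with the test function by duality.

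Part~\ref{it:EquivRescRandomToStoch} is then essentially symmetric: given an adapted pathwise continuous $(u,v)$ solving~\eqref{equa-ranZak} weakly, set $X = e^{W_1}u$ and $Y = v + \cT_t(W_2)$ and run the analogous It\^o computation. This time $d e^{W_1} = e^{W_1}\, dW_1 - \mu e^{W_1}\, dt$, so that the It\^o correction $-\mu X\, dt$ in~\eqref{equa-stoZak-def} is reproduced precisely by the second-order term (this is the usual It\^o-Stratonovich correspondence already noted in the introduction), while the wave-equation noise reappears via the defining relation for $\cT_t(W_2)$. Adaptedness of $(X,Y)$, respectively $(u,v)$, is immediate from the adaptedness of the processes involved and the adaptedness of $W_1$ and $\cT_t(W_2)$. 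Continuity in $H^1 \times L^2$ transfers across the transformation because $e^{\pm W_1}$ is a multiplier on $H^1$ (using $\phi_k^{(1)} \in H^3$) and $\cT_t(W_2) \in C([0,T]; L^2)$ under~\eqref{phik-condition}. Together, these two parts give the claimed equivalence and reduce all subsequent analysis to the pathwise random system~\eqref{equa-ranZak}.
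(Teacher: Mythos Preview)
Your heuristic computation is correct and you identify the right obstacle. The paper's implementation is close in spirit to your primary plan (pair against a test function and reduce to a scalar product rule), but it supplies a concrete regularization device that your sketch leaves implicit: it applies $J_\ve = (I-\ve\Delta)^{-1}$ to the Schr\"odinger equation, expands $\langle X_\ve, e^{W_1}\varphi\rangle = \sum_j \langle X_\ve, e_j\rangle\langle e_j, e^{W_1}\varphi\rangle$ along an orthonormal basis $\{e_j\}$ of $L^2$, and applies the one-dimensional It\^o product rule to each summand separately. The interchange of summation and stochastic integration is justified via the Burkholder--Davis--Gundy inequality and the $\ell^2$-summability in~\eqref{phik-condition}, after localizing with stopping times $\tau_M := \inf\{t:\|X(t)\|_{L^2}^2 > M\}\wedge\tau$. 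Only then does one let $\ve\to 0$. This avoids invoking any infinite-dimensional It\^o formula at all; each step is a scalar identity. Your proposal to ``expand $e^{-W_1}$ via its series representation'' and apply classical It\^o is not wrong in spirit, but does not by itself explain why the pairing $\langle X, e^{W_1}\varphi\rangle$ is a real semimartingale with the required decomposition when $X$ lives only in $H^1$ and the equation is posed in $H^{-1}$.

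Your noise-truncation alternative has a genuine gap as written. The solution $X$ is driven by the \emph{full} noise $W_1$, so replacing $e^{-W_1}$ by $e^{-W_1^{(N)}}$ while keeping $X$ fixed does not land you in a finite-dimensional It\^o setting: the cross-variation between $X$ and $e^{-W_1^{(N)}}$ still involves all modes of the noise through the stochastic integral in the equation for $X$. If instead you truncate the noise in the equation itself and produce approximate solutions $X^{(N)}$, you would need $X^{(N)}\to X$ in a topology strong enough to pass to the limit in every term, which is a separate (and nontrivial) stability result not supplied here. The paper's $J_\ve$-regularization sidesteps this entirely by smoothing in the spatial variable rather than in the noise, so that the approximating equation is still driven by the full $W_1$ and converges to the original one term by term using only dominated convergence.
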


\begin{proof}
It is not difficult to prove the statements for the wave component,
as $\cT_t(W_2)$ satisfies the equation
\begin{align}  \label{equa-cTW2}
	\dd \cT_t(W_2) = \imu |\na| \cT_t(W_2) \dd t - \imu \dd W_2(t).
\end{align}
Hence, we focus on the Schr\"odinger component $u$ and
prove the assertion in~\ref{it:EquivRescStochToRandom} as the one in~\ref{it:EquivRescRandomToStoch} can be proved similarly.

Let $\{e_j\}_{j=1}^\infty \subseteq \mathcal{S}$ be an orthonormal basis of $L^2$.
Let $J_\ve:= (I - \ve \Delta)^{-1}$,
$f_\ve := J_\ve (f)$ for any $f\in \mathcal{S}'$,
and set $\tau_M:= \inf\{t\in[0,T]: \|X(t)\|_{L^2}^2 > M\} \wedge \tau$,
$M\in \mathbb{N}$, with $T > 0$ from Definition~\ref{Def-weak-sol}.
Note that,
for $\bbp$-a.e. $\omega \in \Omega$, there exists $M$ large enough such that
$\tau(\omega) = \tau_M(\omega)$ since $X(\omega)\in C([0,\tau(\omega)]; H^1)$.
This yields that
\begin{align} \label{tauW-tau}
   \bigcup_{M\in \mathbb{N}} \{t\leq \tau_M(\omega)\}
   = \{t\leq \tau(\omega)\}
\end{align}
for $\bbp$-a.e. $\omega \in \Omega$.
For the sake of brevity, we omit the dependence on $\omega$ below.

Fix any $M\in \mathbb{N}$.
We apply the operator $J_\ve$ to both sides of the Schr\"odinger equation in  \eqref{equa-stoZak}
to get
\begin{align*}
   & \dd X_\ve = \imu \Delta X_\ve \dd t - \imu (X \Re Y)_\ve \dd t - (\mu X)_\ve \dd t + \imu \sum\limits_{k=1}^\infty (X\phi^{(1)}_k)_\ve \dd \beta^{(1)}_k(t),
   \ \ t\in [0,\tau_M],
\end{align*}
with $ X_\ve(0) = (X_0)_\ve$.
Pairing the above equation in $L^2$ with $e_j$,
we then obtain that
\begin{align} \label{equa-Xve-ej}
    \<X_\ve(t), e_j\>
    = \<(X_0)_\ve, e_j\>
      + \int_0^t \<\imu \Delta X_\ve - \imu (X \Re Y)_\ve - (\mu X)_\ve, e_j\> \dd s
      + \sum\limits_{k=1}^\infty \int_0^t \<\imu (X\phi^{(1)}_k)_\ve, e_j\>  \dd \beta^{(1)}_k(s)
\end{align}
for every $j \geq 1$ and $t \in [0, \tau_M]$.
Moreover, by It\^o's calculus we have
\begin{align*}
   e^{W_1(t)} = 1  - \int_0^t \mu e^{W_1(s)} \dd s
   + \sum\limits_{k=1}^\infty
     \int_0^t \imu \phi^{(1)}_k e^{W_1(s)} \dd \beta^{(1)}_k(s),
   \ \ t\in[0,\tau_M].
\end{align*}
This yields that for any $\vf \in \mathcal{S}$ and any $j\geq 1$
\begin{align} \label{equa-eW-ej}
   \<e_j, e^{W_1(t)} \vf\>
   = \<e_j, \vf\>
     - \int_0^t \<e_j, \mu e^{W_1} \vf\> \dd s
      +  \sum\limits_{k=1}^\infty \int_0^t \<e_j, \imu \phi^{(1)}_k e^{W_1} \vf\> \dd \beta^{(1)}_k(s),
       \ \ t\in[0,\tau_M],
\end{align}
where we also interchanged the summation over $k$ and the inner product,
which is justified due to
\begin{align*}
   \mathbb{E} \sup\limits_{t\in[0,\tau_M]}
    \bigg|\sum\limits_{k=1}^\infty \int_0^{t} \<e_j, \phi^{(1)}_ke^{W_1}\> \dd \beta^{(1)}_k \bigg|^2
  \lesssim   \mathbb{E} \sum\limits_{k=1}^\infty  \int_0^{\tau_M}
            \left| \<e_j, \phi^{(1)}_k e^{W_1}\>\right|^2 \dd s
  \lesssim \tau_M \|e_j\|_{L^2}^2
            \sum\limits_{k=1}^\infty \|\phi^{(1)}_k\|_{L^2}^2 <\infty.
\end{align*}

Hence, applying the product rule for scalar valued processes
and using \eqref{equa-Xve-ej} and \eqref{equa-eW-ej},
we compute for $t\in [0,\tau_M]$,
\begin{align}   \label{equa-Xve-ej-eW}
      \<X_\ve(t), e_j\> \<e_j, e^{W_1(t)} \vf\>
   =& \<(X_0)_\ve, e_j\> \<e_j, \vf\>
      + \int_0^t \<\imu\Delta X_\ve - \imu (X \Re Y)_\ve - (\mu X)_\ve, e_j\> \<e_j, e^{W_1} \vf\> \dd s \notag \\
    & - \int_0^t \<X_\ve, e_j\> \<e_j, \mu e^{W_1} \vf\> \dd s
    + \sum\limits_{k=1}^\infty \int_0^t \<\imu(X\phi^{(1)}_k)_\ve, e_j\> \<e_j, \imu\phi^{(1)}_k  e^{W_1} \vf\> \dd s   \notag  \\
    & + \sum\limits_{k=1}^\infty \int_0^t \<\imu(X\phi^{(1)}_k)_\ve, e_j\> \<e_j, e^{W_1} \vf\> \dd \beta^{(1)}_k(s)  \notag \\
    & +  \sum\limits_{k=1}^\infty \int_0^t \<X_\ve, e_j\> \<e_j, \imu \phi^{(1)}_k  e^{W_1} \vf\> \dd \beta^{(1)}_k(s).
\end{align}

Next, we sum both sides over $j$ in order
to derive the evolution formula of
the process  $\<X_\ve(t), e^{W_1(t)} \vf\>$.
Note that
the regularity
$(X,Y)\in C([0,\tau_M]; H^1) \times C([0,\tau_M]; L^2)$
and the summability $\{\|\phi^{(1)}_k\|_{L^\infty}\}\in \ell^2$,
implied by \eqref{phik-condition} and the Sobolev embedding $H^3\hookrightarrow L^\infty$ in three dimensions,
suffice to justify the application of Fubini's theorem
to interchange the sum with integration.
We take one stochastic integration as an example
to illustrate the arguments.
Actually,
by the Burkholder-Davis-Gundy inequality
and the Cauchy inequality,
\begin{align*}
   &\mathbb{E} \sup\limits_{s\in [0,\tau_M\wedge t]}
   \bigg|\sum\limits_{j=1}^\infty  \sum\limits_{k=1}^\infty  \int_0^{s}
    \<\imu (X\phi^{(1)}_k)_\ve, e_j\> \<e_j, e^{W_1} \vf\> \dd \beta^{(1)}_k(r) \bigg|^2 \notag \\
   &\lesssim \mathbb{E} \sum\limits_{j=1}^\infty  \sum\limits_{k=1}^\infty  \int_0^{\tau_M\wedge t}
   \bigg|\<\imu (X\phi^{(1)}_k)_\ve, e_j\> \<e_j, e^{W_1} \vf\>\bigg|^2 \dd s  \notag\\
   &\lesssim \sum\limits_{k=1}^\infty
   \mathbb{E} \int_0^{\tau_M \wedge t}  \|(X\phi^{(1)}_k)_\ve\|_{L^2}^2
             \|e^{W_1} \vf\|_{L^2}^2 \dd s  \notag \\
   &\lesssim t M \|\vf\|_{L^2}^2
               \sum\limits_{k=1}^\infty \|\phi^{(1)}_k\|_{L^\infty}^2
  <\infty,
\end{align*}
where we also used $|e^{W_1}|=1$
and
\begin{align*}
   \sup_{s\in[0,\tau_M\wedge t]}\|(X(s) \phi^{(1)}_k)_\ve\|_{L^2}^2
   \leq \sup_{s\in [0,\tau_M \wedge t]} \|X(s)\|_{L^2}^2 \|\phi^{(1)}_k\|_{L^\infty}^2
   \leq M  \|\phi^{(1)}_k\|_{L^\infty}^2
\end{align*}
in the last step.
We thus obtain
\begin{align*}
    \sum\limits_{j=1}^\infty \sum\limits_{k=1}^\infty \int_0^t \<\imu (X\phi^{(1)}_k)_\ve, e_j\> \<e_j, e^{W_1} \vf\> \dd \beta^{(1)}_k(s)
    =& \sum\limits_{k=1}^\infty \int_0^t  \sum\limits_{j=1}^\infty \<\imu (X\phi^{(1)}_k)_\ve, e_j\> \<e_j, e^{W_1} \vf\> \dd \beta^{(1)}_k(s)  \notag \\
    =&  \sum\limits_{k=1}^\infty \int_0^t  \<\imu (X\phi^{(1)}_k)_\ve,  e^{W_1} \vf\> \dd \beta^{(1)}_k(s)
\end{align*}
for all $t\in [0,\tau_M]$. The other terms in~\eqref{equa-Xve-ej-eW}
can be treated in a similar manner.

Hence, interchanging the summation over $j$
with the integration in time in~\eqref{equa-Xve-ej-eW}, we infer that
\begin{align}  \label{equa-Xve-eW}
   \<X_\ve(t), e^{W_1(t)} \vf\>
   =& \sum\limits_{j=1}^\infty
   \<X_\ve(t), e_j\> \<e_j, e^{W_1(t)} \vf\> \notag \\
   =& \<(X_0)_\ve, \vf\>
      + \int_0^t \<\imu \Delta X_\ve - \imu (X \Re Y)_\ve - (\mu X)_\ve, e^{W_1} \vf\> \dd s  \notag \\
    & - \int_0^t \<X_\ve, \mu e^{W_1} \vf\> \dd s
      + \sum\limits_{k=1}^\infty  \int_0^t \<(X\phi^{(1)}_k)_\ve, \phi^{(1)}_k e^{W_1}  \vf\> \dd s \notag \\
     & + \sum\limits_{k=1}^\infty \int_0^t \<\imu (X\phi^{(1)}_k)_\ve, e^{W_1} \vf\>  \dd \beta^{(1)}_k(s)
      -  \sum\limits_{k=1}^\infty  \int_0^t \<\imu \phi^{(1)}_k X_\ve, e^{W_1} \vf\>  \dd \beta^{(1)}_k(s)
\end{align}
for all $t\in [0,\tau_M]$.
Next we pass to the limit $\ve\to 0$ in \eqref{equa-Xve-eW} above.
Note that
the operator $J_\ve$ is contractive on the spaces $H^k$
and
$f_\ve \to f$ in $H^k$ as $\epsilon \rightarrow 0$ for every $k=-1,0,1$.
Taking into account the regularity of $(X, Y)$
and the $\ell^2$-summability of $\{\|\phi^{(1)}_k\|_{L^\infty}\}$,
we can interchange the limit with the sum and the integral.
For instance, with the Burkholder-Davis-Gundy inequality we get
for the stochastic integration
\begin{align*}
  & \mathbb{E} \sup\limits_{s\in[0,\tau_M\wedge t]} \bigg| \sum\limits_{k=1}^\infty \int_0^{\tau_M \wedge t}
    \<\imu (X\phi^{(1)}_k)_\ve - \imu X\phi^{(1)}_k, e^{W_1} \vf\> \dd \beta^{(1)}_k(r) \bigg|^2  \\
  &\lesssim \mathbb{E} \int_0^{\tau_M \wedge t}  \sum\limits_{k=1}^\infty
  \bigg|\<\imu (X\phi^{(1)}_k)_\ve - \imu X\phi^{(1)}_k, e^{W_1} \vf\> \bigg|^2 \dd s \\
  &\lesssim \| \vf\|_{L^2}^2
  \mathbb{E} \int_0^{\tau_M \wedge t}  \sum\limits_{k=1}^\infty
  \|(X\phi^{(1)}_k)_\ve - X\phi^{(1)}_k\|_{L^2}^2  \dd s.
\end{align*}
Since
$ \|(X\phi^{(1)}_k)_\ve - X\phi^{(1)}_k\|_{L^2}^2 \to 0$, $\dd \bbp \otimes \dd t$-a.e.
and $\sup_{s\in[0,\tau_M]} \|(X(s)\phi^{(1)}_k)_\ve\|_{L^2}^2 \leq  M \|\phi^{(1)}_k\|_{L^\infty}^2
\in L^1_\Omega L^1_t\ell_k^1$,
the dominated convergence theorem yields that
the above right-hand side converges to zero as $\ve\to 0$,
and thus there is a null sequence $(\ve_n)_{n \in \N}$ such that
\begin{align*}
   \sum\limits_{k=1}^\infty \int_0^t \<\imu (X\phi^{(1)}_k)_{\ve_n} , e^{W_1} \vf\> \dd \beta^{(1)}_k(s)
   \to  \sum\limits_{k=1}^\infty \int_0^t \<\imu X\phi^{(1)}_k , e^{W_1} \vf\> \dd \beta^{(1)}_k(s),
   \ \ {\rm as}\ n \to \infty,\ t\in [0,\tau_M],\ \bbp-a.s.
\end{align*}
Analogous arguments allow us to pass to the limit for the other terms in \eqref{equa-Xve-eW}
and thus we arrive at
\begin{align*}
   _{H^{-1}}\< X(t), e^{W_1(t)} \vf \>_{H^1}
   = {}_{H^{-1}}\<X_0, \vf\>_{H^{1}}
      + \int_0^t {}_{H^{-1}}\<\imu \Delta X - \imu X \Re Y, e^{W_1} \vf\>_{H^1} \dd s,
      \ \ t\in [0,\tau_M].
\end{align*}
Taking into account $\ol{e^{W_1}} = e^{-W_1}$
and interchanging the integration with the duality pairing,
we conclude
\begin{align*}
   {}_{H^{-1}}\<u(t), \vf\>_{H^1}
   =& {}_{H^{-1}}\<X_0 + \int_0^t  \imu e^{-W_1} \Delta (e^{W_1} u) - \imu u \Re Y \dd s , \vf\>_{H^1}  \notag \\
   =& {}_{H^{-1}}\<X_0
   + \int_0^t  \imu e^{-W_1}\Delta (e^{W_1} u) - \imu u \Re v - \imu u \Re \cT_s(W_2) \dd s, \vf\>_{H^1}
\end{align*}
for any $\vf\in \mathcal{S}$.
But
since $\mathcal{S}$ is dense in $H^{1}$,
it follows that that $u$ solves
the Schr\"odinger equation in~\eqref{equa-ranZak} in the space $H^{-1}$
on $[0,\tau_M]$, $\bbp$-a.s.

Finally, we deduce from~\eqref{tauW-tau} that
$u$ solves the Schr\"odinger equation \eqref{equa-ranZak} on the whole interval $[0,\tau]$, $\bbp$-a.s.,
completing the proof.
\end{proof}

\subsection{Refined rescaling transformations}

In the proof of the wellposedness theorem we will extend a solution of~\eqref{equa-ranZak} up to its maximal existence time. This requires to solve the system~\eqref{equa-ranZak} also on intervals $[\sigma, \sigma+\tau]$ away from zero.
In Proposition~\ref{Prop-usigma-vsigma-equa} below we use refined rescaling transforms in order to show that this problem can be reduced to solving an appropriate system on $[0,\tau]$.

\begin{proposition} [Refined rescaling transformations] \label{Prop-usigma-vsigma-equa}
Let $\sigma, \tau \colon \Omega \rightarrow [0,T]$ such that $\sigma+\tau\leq T$.
\begin{enumerate}
\item \label{it:RefResc0ToSig}
Let $(u_\sigma, v_\sigma) \in C([0,\tau]; H^1 \times L^2)$
be an analytically weak solution to the system
\begin{equation}   \label{equa-Zakha-sigma}
	\left\{\aligned
	  \partial_t u_\sigma(t) &=  \imu e^{-W_{1,\sigma}(t)} \Delta (e^{W_{1,\sigma}(t)} u_\sigma(t))
	- \imu u_\sigma(t) \Re v_\sigma(t)
	- \imu u_\sigma(t) \Re \cT_{\sigma+t, \sigma}(W_2),   \\
	  \partial_t v_\sigma(t) &=   \imu  |\na| v_\sigma(t) + \imu |\na| |u_\sigma(t)|^2,
	\endaligned
	\right.
\end{equation}
as equations in $H^{-1} \times H^{-1}$,
where $W_{1,\sigma}$ and $ \cT_{\sigma+t, \sigma} (W_2)$ are increments of the noise defined by
\begin{align}
   & W_{1,\sigma}(t):= W_1(\sigma+t) - W_1(\sigma),   \label{W-sigma-rescal}  \\
   & \cT_{\sigma+t, \sigma} (W_2)
     := -\imu \int_\sigma^{\sigma+t} e^{\imu (\sigma+t-s)|\na|} \dd W_2(s)  \label{Tsigmat-W2}
\end{align}
for all $t\in [0,\tau]$.
For any $t\in [\sigma, \sigma + \tau]$, we set
\begin{align}
	 u(t)
	&:= e^{-W_1(\sigma)} u _\sigma(t-\sigma),    \label{u-sigma-rescal} \\
     v(t) &:= v_\sigma(t-\sigma)- e^{\imu (t-\sigma) |\na|}\cT_\sigma(W_2).    \label{v-sigma-rescal}
\end{align}
Then, $(u, v)$ is an analytically weak solution of the system \eqref{equa-ranZak}
on $[\sigma, \sigma+\tau]$
with
\begin{align}  \label{usigma-vsigma-initial}
   (u(\sigma), v(\sigma)) = (e^{-W_1(\sigma)} u_\sigma(0), v_\sigma(0)-\cT_\sigma(W_2)).
\end{align}

\item \label{it:RefRescSigTo0} Conversely,
if $(u,v) \in C([\sigma, \sigma+\tau]; H^1\times L^2)$
is an analytically weak solution of the system \eqref{equa-ranZak} on $[\sigma, \sigma+\tau]$
as equations in $H^{-1}\times H^{-1}$,
then
\begin{align}
	\label{eq:u-sigma-v-sigma-rescal}
    ( u_\sigma(t), v_\sigma(t))
    :=( e^{W_1(\sigma)} u(\sigma+t), v(\sigma+t) + e^{\imu t|\na|}\cT_\sigma(W_2)),
    \ \ t\in [0,\tau],
\end{align}
is an analytically weak solution of the system \eqref{equa-Zakha-sigma} on $[0,\tau]$.
\end{enumerate}
\end{proposition}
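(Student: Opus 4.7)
My plan is a direct pathwise computation, exploiting the fact that both~\eqref{equa-ranZak} and~\eqref{equa-Zakha-sigma} are random (not stochastic) PDEs. Thus no It\^o calculus is involved, and everything will reduce to an algebraic change of variable in the weak integral formulation. Since the transformations in~\ref{it:RefResc0ToSig} and~\ref{it:RefRescSigTo0} are mutual inverses on the level of functions, it suffices to carry out the argument for~\ref{it:RefResc0ToSig} in detail; part~\ref{it:RefRescSigTo0} then follows by reading the same identities backwards.

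I would first record three algebraic identities that carry essentially all the content. From the additive decomposition $W_1(\sigma+t, x) - W_1(\sigma, x) = W_{1,\sigma}(t,x)$ we obtain
\begin{equation*}
	e^{W_1(\sigma+t)} u(\sigma+t) = e^{W_{1,\sigma}(t)} u_\sigma(t),
\end{equation*}
and since each $\phi_k^{(j)}$ is real while $W_j = \sum_k \imu \phi_k^{(j)}\beta_k^{(j)}$ is purely imaginary, $|e^{W_1(\sigma)}|=1$, giving $|u(\sigma+t)|^2 = |u_\sigma(t)|^2$. Splitting the Duhamel integral defining $\cT_{\sigma+t}(W_2)$ at $s = \sigma$ and factoring out the propagator yields
\begin{equation*}
	\cT_{\sigma+t}(W_2) = e^{\imu t|\na|}\cT_\sigma(W_2) + \cT_{\sigma+t,\sigma}(W_2),
\end{equation*}
so that~\eqref{v-sigma-rescal} rearranges to $v(\sigma+t) + \cT_{\sigma+t}(W_2) = v_\sigma(t) + \cT_{\sigma+t,\sigma}(W_2)$, which handles the Schr\"odinger nonlinearity under the substitution.

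With these identities in hand, I would test the two equations of~\eqref{equa-Zakha-sigma} against an arbitrary $\varphi \in H^1$, producing for each $t \in [0,\tau]$ integral equations of the form $\langle u_\sigma(t),\varphi\rangle - \langle u_\sigma(0),\varphi\rangle = \int_0^t \langle\cdots,\varphi\rangle\, ds$ and analogously for $v_\sigma$. The change of variable $s \mapsto s-\sigma$ combined with the identities above converts each integrand term by term into the corresponding integrand of~\eqref{equa-ranZak} evaluated at $(u,v)$, after moving the $\sigma$-dependent factor $e^{W_1(\sigma)}$ onto the test function using $\overline{e^{-W_1(\sigma)}}=e^{W_1(\sigma)}$. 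Since multiplication by $e^{W_1(\sigma)}$ is a bijection on $H^1$ (as $W_1(\sigma) \in H^3 \hookrightarrow W^{1,\infty}$ by~\eqref{phik-condition}), the resulting class of test functions is still dense, yielding that $(u,v)$ solves~\eqref{equa-ranZak} weakly on $[\sigma, \sigma+\tau]$; the initial condition~\eqref{usigma-vsigma-initial} is read off at $t = 0$.

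The main bookkeeping issue I anticipate is handling the conjugated Laplacian $e^{-W_1(\sigma+t)}\Delta\bigl(e^{W_1(\sigma+t)}u(\sigma+t)\bigr)$: by the first identity the quantity $e^{W_1(\sigma+t)}u(\sigma+t)$ already equals $e^{W_{1,\sigma}(t)}u_\sigma(t)$, while the prefactor splits as $e^{-W_1(\sigma+t)} = e^{-W_1(\sigma)} e^{-W_{1,\sigma}(t)}$ with $e^{-W_1(\sigma)}$ independent of $t$. Thus this term matches exactly the conjugated Laplacian appearing in~\eqref{equa-Zakha-sigma} up to the multiplier $e^{-W_1(\sigma)}$, which is then absorbed by the test function as above. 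Once this computation is placed carefully inside the duality pairing, and once the (easier) pointwise verification of the wave equation using $|u|^2 = |u_\sigma|^2$ and the decomposition of $\cT_{\sigma+t}(W_2)$ is performed, the remaining steps are routine algebra.
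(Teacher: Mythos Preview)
Your proposal is correct and follows essentially the same approach as the paper: a direct pathwise computation in the integral (weak) formulation, relying on exactly the three identities you isolate --- the decomposition $W_1(\sigma+t)=W_1(\sigma)+W_{1,\sigma}(t)$, the modulus-one property $|e^{W_1(\sigma)}|=1$, and the splitting $\cT_{\sigma+t}(W_2) = e^{\imu t|\na|}\cT_\sigma(W_2) + \cT_{\sigma+t,\sigma}(W_2)$. The only cosmetic difference is that the paper manipulates the integral equation directly as an identity in $H^{-1}$ rather than pairing against a test function and moving $e^{W_1(\sigma)}$ across the duality, and it writes out both directions~\ref{it:RefResc0ToSig} and~\ref{it:RefRescSigTo0} explicitly rather than invoking the inverse-transform symmetry.
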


\begin{proof}
\ref{it:RefResc0ToSig} Let us first consider the wave component.
By \eqref{v-sigma-rescal} and the equation for $v_\sigma$ in \eqref{equa-Zakha-sigma},
we get for any $t\in [\sigma, \sigma+\tau]$
\begin{align}  \label{v-vsigma-esti.1}
	v(t)
	&= v_\sigma(t-\sigma) - e^{\imu (t-\sigma)|\na|} \cT_\sigma(W_2)  \notag \\
    &= v_\sigma(0)
	   + \int_0^{t-\sigma} \imu |\na| v_\sigma(s) \dd s
	   + \int_0^{t-\sigma} \imu |\na| |u_\sigma(s)|^2 \dd s
	   - e^{\imu (t-\sigma)|\na|} \cT_\sigma(W_2).
\end{align}
Using \eqref{v-sigma-rescal} and
a change of variables, we derive
\begin{align}  \label{int-vsigma-esti}
	\int_0^{t-\sigma} \imu |\na| v_\sigma(s) \dd s
	&= \int_0^{t-\sigma} \imu |\na| (v(s+\sigma)+e^{\imu s|\na|}\cT_\sigma(W_2)) \dd s \notag \\
	&= \int_\sigma^t \imu |\na| (v(s) + e^{\imu (s-\sigma)|\na|}\cT_\sigma(W_2)) \dd s \notag \\
	&= \int_\sigma^t \imu |\na| v(s) \dd s
	   + (e^{\imu (t-\sigma)|\na|}-1) \cT_\sigma(W_2).
\end{align}
Moreover, we obtain from \eqref{u-sigma-rescal}
and $|e^{W_1}|=1$ that
\begin{align} \label{int-usigma-esti}
	\int_0^{t-\sigma} \imu |\na| |u_\sigma(s)|^2 \dd s
    =\int_0^{t-\sigma} \imu |\na| |u(\sigma+s)|^2 \dd s
	= \int_\sigma^t \imu |\na| |u(s)|^2 \dd s.
\end{align}
Hence, plugging \eqref{int-vsigma-esti} and \eqref{int-usigma-esti} into \eqref{v-vsigma-esti.1},
we conclude
\begin{align*}
	v(t)
    &= v_\sigma(0) - \cT_\sigma(W_2)
      +  \int_\sigma^t i|\na| v(s) \dd s
	+ \int_\sigma^t i|\na| |u(s)|^2 \dd s \notag \\
    &= v(\sigma) +  \int_\sigma^t i|\na| v(s)
	    +   i|\na| |u(s)|^2 \dd s,
\end{align*}
which yields that $v$ satisfies the wave equation in~\eqref{equa-ranZak} on $[\sigma, \sigma + \tau]$.

Regarding the Schr\"odinger component,
we infer from~\eqref{u-sigma-rescal} and the equation for $u_\sigma$ in~\eqref{equa-Zakha-sigma}
that
\begin{align*}
      u(\sigma+t)
   &= e^{-W_1(\sigma)} u_\sigma(t)  \\
   &=  e^{-W_1(\sigma)} u_\sigma(0)
      + e^{-W_1(\sigma)} \int_0^t \imu e^{-W_{1,\sigma}(s)}
	  \Delta ( e^{W_{1,\sigma}(s)} u_\sigma(s))
           - \imu u_\sigma(s)
		   \Re (v_\sigma(s) + \cT_{\sigma+s,\sigma}(W_2)) \dd s.
\end{align*}
 for any $t\in [0,\tau]$. The definition of $W_{1,\sigma}$ in~\eqref{W-sigma-rescal} then yields
\begin{align*}
   u(\sigma+t)
  &= e^{-W_1(\sigma)} u_\sigma(0)
      +\int_0^t \imu e^{-W_1(\sigma+s)} \Delta ( e^{W_1(\sigma+s)} e^{-W_1(\sigma)}u_\sigma(s)) \\
          &\hspace{14em}  - \imu e^{-W_1(\sigma)} u_\sigma(s)
		   \Re(v_\sigma(s) + \cT_{\sigma+s,\sigma}(W_2)) \dd s.
\end{align*}
Employing~\eqref{u-sigma-rescal} again,
the identity
\begin{align*}
	v_\sigma(s)+ \cT_{\sigma+s,\sigma}(W_2)
	&= v(\sigma+s) + e^{is|\na|} \cT_\sigma(W_2)
	+ \cT_{\sigma+s, \sigma}(W_2)  \notag \\
	&= v(\sigma+s) + \cT_{\sigma+s}(W_2),
\end{align*}
and a change of variables,
we infer
\begin{align*}
    u(\sigma+t)
  &= e^{-W_1(\sigma)} u_\sigma(0)
      +\int_0^t \imu e^{-W_1(\sigma+s)} \Delta ( e^{W_1(\sigma+s)}  u(\sigma+s))
           - \imu  u(\sigma+s)
		   \Re(v(\sigma+s) + \cT_{\sigma+s}(W_2)) \dd s \\
  &=  e^{-W_1(\sigma)} u_\sigma(0)
      +\int_\sigma^{\sigma+t} \imu e^{-W_1(s)} \Delta ( e^{W_1(s)}  u(s))
           - \imu   u(s) \Re v(s)
		   - \imu u(s)\Re \cT_s(W_2)\dd s
\end{align*}
as an equation in $H^{-1}$ for all $t\in [0,\tau]$.
We conclude that $u$ is an analytically weak solution of the Schr\"odinger equation in~\eqref{equa-ranZak}
on $[\sigma, \sigma+\tau]$.

\ref{it:RefRescSigTo0}
Using~\eqref{eq:u-sigma-v-sigma-rescal} and
the equation for the wave component $v$ in \eqref{equa-ranZak} on $[\sigma, \sigma+\tau]$,
we obtain
\begin{align*}
	v_\sigma(t)
	&= v(\sigma) + \int_\sigma^{\sigma+t} \imu |\na|v(s) \dd s
	 + \int_\sigma^{\sigma+t} \imu |\na||u(s)|^2 \dd s
	 + e^{\imu t|\na|} \cT_\sigma (W_2)
\end{align*}
for all $t\in [0,\tau]$.
Employing~\eqref{eq:u-sigma-v-sigma-rescal} once more, we derive with a change of variables
\begin{align*}
	\int_\sigma^{\sigma+t} \imu |\na|v(s) \dd s
	&= \int_\sigma^{\sigma+t} \imu |\na|(v_\sigma(s-\sigma)
	   - e^{\imu (s-\sigma)|\na|}\cT_\sigma(W_2)) \dd s  \\
	&= \int_0^{t} \imu |\na| v_\sigma(s) \dd s
	- \int_0^{t} \imu |\na| e^{\imu s|\na|}\cT_\sigma(W_2)) \dd s  \\
	&=  \int_0^{t} \imu |\na| v_\sigma(s) \dd s
	  - e^{\imu t|\na|} \cT_\sigma(W_2) + \cT_\sigma(W_2).
\end{align*}
Moreover,
since $|e^{W_1(\sigma)}|=1$,
\eqref{eq:u-sigma-v-sigma-rescal} and a change of variables also yield
\begin{align*}
	\int_\sigma^{\sigma+t} \imu |\na||u(s)|^2 \dd s
	= \int_0^t \imu |\na| |u(\sigma+s)|^2 \dd s
	= \int_0^t \imu |\na| |u_\sigma(s)|^2 \dd s.
\end{align*}
Combining the last three identities, we thus arrive at
\begin{align*}
	v_\sigma(t)
	&= v(\sigma) + \cT_\sigma(W_2)
	   +  \int_0^{t} \imu |\na| v_\sigma(s) \dd s
	   + \int_0^t \imu |\na| |u_\sigma(s)|^2 \dd s \\
	&= v_\sigma(0)
	   +  \int_0^{t} \imu |\na| v_\sigma(s)  \dd s
	   + \imu |\na| |u_\sigma(s)|^2 \dd s
\end{align*}
for all $t\in [0,\tau]$.
Consequently, $v_\sigma$ satisfies the wave equation in \eqref{equa-Zakha-sigma}
on $[0,\tau]$.

Concerning the Schr\"odinger component,
we first note that the equation for $u$ in~\eqref{equa-ranZak} on $[\sigma, \sigma+\tau]$
implies
\begin{align*}
  u_\sigma(t)
   &= e^{W_1(\sigma)} u(\sigma+t) \\
   &= e^{W_1(\sigma)} u(\sigma)
     +  e^{W_1(\sigma)} \int_\sigma^{\sigma+t} \imu e^{-W_1(s)}
	 \Delta(e^{W_1(s)} u(s)) - \imu u(s) \Re v(s)
	 - \imu u(s) \Re \cT_s(W_2) \dd s \\
   &= u_\sigma(0)
     +  e^{W_1(\sigma)} \int_0^{t} \imu e^{-W_1(\sigma+s)} \Delta(e^{W_1(\sigma+s)} u(\sigma+s))
            - \imu  u(\sigma+s) \Re v(\sigma+s) \\
		&\hspace{28em}	- \imu u(\sigma+s) \Re \cT_{\sigma+s}(W_2) \dd s
\end{align*}
for all $t\in [0,\tau]$.
Employing~\eqref{W-sigma-rescal} and~\eqref{eq:u-sigma-v-sigma-rescal}, we thus obtain
\begin{align*}
  u_\sigma(t)
   &= u_\sigma(0)
     +   \int_0^{t} \imu e^{-(W_1(\sigma+s) - W_1(\sigma))}
          \Delta(e^{W_1(\sigma+s)- W_1(\sigma)} e^{W_1(\sigma)} u(\sigma+s)) \dd s  \notag \\
    & \qquad \  - \int_0^{t} \imu  e^{W_1(\sigma)}u(\sigma+s)
			\Re(v(\sigma+s) + \cT_{\sigma+s}(W_2)) \dd s \\
   &=  u_\sigma(0)
     + \int_0^t \imu e^{-W_{1,\sigma}(s)} \Delta (e^{W_{1,\sigma}} u_\sigma(s))
            - \imu u_\sigma(s)
			\Re(v_\sigma(s) - e^{\imu s|\na|}\cT_\sigma(W_2) + \cT_{\sigma+s}(W_2)) \dd s
\end{align*}
for any $t\in [0,\tau]$,
where the equation is taken in the space $H^{-1}$.
Taking into account the identity
\begin{align*}
	- e^{\imu s|\na|}\cT_\sigma(W_2) + \cT_{\sigma+s}(W_2)
	= \cT_{\sigma+s, \sigma}(W_2),
\end{align*}
we arrive at
\begin{align}
	u_\sigma(t)
	= u_\sigma(0)
	+ \int_0^t \imu e^{-W_{1,\sigma}(s)} \Delta (e^{W_{1,\sigma}(s)} u_\sigma(s))
		   - \imu u_\sigma(s) \Re v_\sigma(s)
		   - \imu u_\sigma(s) \Re \cT_{\sigma+s, \sigma}(W_2) \dd s
\end{align}
 for all  $t\in [0,\tau]$. We conclude that $u_\sigma$ solves the Schr\"odinger equation in \eqref{equa-Zakha-sigma}
on $[0,\tau]$, finishing the proof.
\end{proof}

\subsection{Gluing solutions}

When we extend a solution of~\eqref{equa-ranZak} up to its maximal existence time, we have to concatenate the solutions provided by Proposition~\ref{Prop-usigma-vsigma-equa}. The next result explains how we can glue together these solutions.

\begin{proposition} [Gluing solutions] \label{Prop-u1-usigma-glue}
Let $(u_1,v_1)\in C([0,\sigma]; H^1) \times C([0,\sigma]; L^2)$
be an analytically weak solution to \eqref{equa-ranZak} on $[0,\sigma]$,
and let $(u_\sigma,v_\sigma)\in C([0,\tau]; H^1) \times C([0,\tau]; L^2)$
be an analytically weak solution to \eqref{equa-Zakha-sigma} on $[0,\tau]$
with the initial condition
\begin{align*}
   (u_\sigma(0), v_\sigma(0))
   := (e^{W_1(\sigma)} u_1(\sigma), v_1(\sigma) + \cT_\sigma(W_2)).
\end{align*}
For any $t\in [0,\sigma+\tau]$,
we set
\begin{align}
\label{u-u1-usigma}
   u(t):= \begin{cases}
   				u_1(t), \quad &\text{if } t \in [0,\sigma), \\
   				 e^{-W_1(\sigma)}u_\sigma(t-\sigma), &\text{if } t \in [\sigma, \sigma + \tau], 						\end{cases} \qquad
   v(t):= \begin{cases}
   				 v_1(t), \quad &\text{if } t \in [0,\sigma), \\
   				v_\sigma(t-\sigma) - e^{\imu (t-\sigma)|\na|}\cT_\sigma(W_2)), &\text{if } t \in [\sigma, \sigma + \tau].
   				\end{cases}
\end{align}
Then, $(u,v) \in C([0,\sigma+\tau]; H^1\times L^2)$
is an analytically weak solution to the system \eqref{equa-ranZak} on the time
interval $[0,\sigma+\tau]$.
\end{proposition}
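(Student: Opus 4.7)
My approach is to reduce the statement to a direct application of Proposition~\ref{Prop-usigma-vsigma-equa}\ref{it:RefResc0ToSig} followed by a concatenation argument; no new analytical estimates are required. The key observation is that the prescribed initial data $(u_\sigma(0),v_\sigma(0))=(e^{W_1(\sigma)}u_1(\sigma),\,v_1(\sigma)+\cT_\sigma(W_2))$ has been chosen precisely so that the inverse rescaling at time $\sigma$ recovers $(u_1(\sigma),v_1(\sigma))$, making the two branches in~\eqref{u-u1-usigma} match continuously at $t=\sigma$.

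First I would apply Proposition~\ref{Prop-usigma-vsigma-equa}\ref{it:RefResc0ToSig} to $(u_\sigma,v_\sigma)$. This produces an analytically weak solution $(\tilde u,\tilde v)\in C([\sigma,\sigma+\tau];H^1\times L^2)$ of~\eqref{equa-ranZak} on $[\sigma,\sigma+\tau]$ defined via~\eqref{u-sigma-rescal}--\eqref{v-sigma-rescal}, which coincides exactly with the second branch of the definition~\eqref{u-u1-usigma}. Furthermore, by~\eqref{usigma-vsigma-initial},
\[
(\tilde u(\sigma),\tilde v(\sigma))=\bigl(e^{-W_1(\sigma)}u_\sigma(0),\,v_\sigma(0)-\cT_\sigma(W_2)\bigr)=\bigl(u_1(\sigma),\,v_1(\sigma)\bigr),
\]
so the two branches of~\eqref{u-u1-usigma} agree at $t=\sigma$. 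Combined with the continuity of $(u_1,v_1)$ on $[0,\sigma]$ and of $(\tilde u,\tilde v)$ on $[\sigma,\sigma+\tau]$, this immediately yields $(u,v)\in C([0,\sigma+\tau];H^1\times L^2)$.

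Second, I would verify the analytically weak form of~\eqref{equa-ranZak} on the entire interval. On $[0,\sigma]$ it holds by assumption since $(u,v)=(u_1,v_1)$. For $t\in[\sigma,\sigma+\tau]$, I would split the time integral at $\sigma$ and invoke the weak formulations of $(u_1,v_1)$ on $[0,\sigma]$ and of $(\tilde u,\tilde v)$ on $[\sigma,\sigma+\tau]$, obtaining
\begin{align*}
u(t)=\tilde u(t)&=\tilde u(\sigma)+\int_\sigma^t\bigl[\imu e^{-W_1(s)}\Delta(e^{W_1(s)}u(s))-\imu u(s)\,\Re v(s)-\imu u(s)\,\Re\cT_s(W_2)\bigr]\,ds\\
&=u_1(\sigma)+\int_\sigma^t[\cdots]\,ds=X_0+\int_0^\sigma[\cdots]\,ds+\int_\sigma^t[\cdots]\,ds=X_0+\int_0^t[\cdots]\,ds
\end{align*}
as an identity in $H^{-1}$, and analogously for the wave component. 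Here I use that $(u,v)=(u_1,v_1)$ on $[0,\sigma]$ and $(u,v)=(\tilde u,\tilde v)$ on $[\sigma,\sigma+\tau]$, so the integrands on each subinterval agree with those coming from the respective solution.

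The only step requiring care is the matching at $t=\sigma$, but this is essentially a tautology once~\eqref{usigma-vsigma-initial} is invoked: the initial condition for $(u_\sigma,v_\sigma)$ posited in the hypothesis is constructed exactly so that the rescaling from Proposition~\ref{Prop-usigma-vsigma-equa} produces a solution on $[\sigma,\sigma+\tau]$ whose value at $\sigma$ equals $(u_1(\sigma),v_1(\sigma))$. I therefore do not anticipate any substantive analytical obstacle; the proof amounts to careful bookkeeping with the refined rescaling formulas.
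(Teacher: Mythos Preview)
Your proposal is correct and follows essentially the same approach as the paper: apply Proposition~\ref{Prop-usigma-vsigma-equa}\ref{it:RefResc0ToSig} to $(u_\sigma,v_\sigma)$ to obtain a solution of~\eqref{equa-ranZak} on $[\sigma,\sigma+\tau]$ whose value at $\sigma$ matches $(u_1(\sigma),v_1(\sigma))$, and then concatenate the two integral formulations by splitting the time integral at $\sigma$. The paper carries out exactly this argument, writing out the Schr\"odinger and wave components separately; your identification of the matching at $t=\sigma$ as the only delicate point (resolved by~\eqref{usigma-vsigma-initial}) is also how the paper handles it.
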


\begin{proof}
By \eqref{u-u1-usigma}, $(u,v)$ solves~\eqref{equa-ranZak}
on $[0,\sigma]$.
We thus focus on the case $t\in [\sigma, \sigma+\tau]$ in the following.

Proposition~\ref{Prop-usigma-vsigma-equa}~\ref{it:RefResc0ToSig}
yields that for every $t\in [\sigma, \sigma+\tau]$,
\begin{align*}
   u(t)  = u(\sigma)
    + \int_\sigma^t \imu e^{-W_1(s)} \Delta (e^{W_1(s)} u(s)) - \imu u(s) \Re v(s) - \imu u(s) \Re \mathcal{T}_s(W_2) \dd s.
\end{align*}
Since $u(\sigma) = e^{-W_1(\sigma)} u_\sigma(0) = u_1(\sigma)$
and $u_1$ solves the Schr\"odinger equation in~\eqref{equa-ranZak} on $[0,\sigma]$,
we also have
\begin{align*}
   u(\sigma)
   =  u_1(\sigma)
   = X_0 + \int_0^\sigma \imu e^{-W_1(s)} \Delta (e^{W_1(s)} u(s)) - \imu u(s) \Re v(s) - \imu u(s) \Re\mathcal{T}_s(W_2) \dd s.
\end{align*}
Therefore, combining the previous two equations,
we arrive at
\begin{align*}
    u(t)
   = X_0
    + \int_0^t \imu e^{-W_1(s)} \Delta (e^{W_1(s)} u(s)) - \imu u(s) \Re v(s)  - \imu u(s) \Re \mathcal{T}_s(W_2) \dd s
\end{align*}
for all $t\in [\sigma, \sigma+\tau]$, which yields that $u$ solves the Schr\"odinger equation in~\eqref{equa-ranZak} on $[\sigma, \sigma+\tau]$.

Turning to the wave component,
we apply Proposition~\ref{Prop-usigma-vsigma-equa}~\ref{it:RefResc0ToSig} again
in order to derive that for every $t\in [\sigma, \sigma+\tau]$,
\begin{align*}
   v(t)
   &= v(\sigma) +  \int_\sigma^{t} \imu |\na| v(s) + \imu |\na| |u(s)|^2 \dd s\\
   &= v_\sigma(0) -\cT_\sigma(W_2)
      + \int_\sigma^{t} \imu |\na| v(s) + \imu |\na| |u(s)|^2 \dd s.
\end{align*}
Since $v_1$ solves the wave equation in~\eqref{equa-ranZak} on $[0,\sigma]$, we moreover get
\begin{align*}
  v_\sigma(0) - \cT_\sigma(W_2) = v_1(\sigma)
  = Y_0 + \int_0^\sigma \imu |\na| v(s) + \imu |\na| |u(s)|^2 \dd s,
\end{align*}
which implies
\begin{align*}
   v(t) = Y_0 + \int_0^t \imu |\na||u(s)|^2
       +  \imu |\na| v(s) \dd s
\end{align*}
for all $t\in [\sigma, \sigma+\tau]$. Consequently, $v$ also satisfies the wave equation in~\eqref{equa-ranZak} on $[\sigma, \sigma+\tau]$, completing the proof.
\end{proof}

%%%%%%%%%%%%%%%%%%

\section{Normal form reduction}   \label{Sec-Normal-Form}

In this section we perform the normal form transform for the stochastic Zakharov system in the formulation~\eqref{equa-ranZak-bc}. It is based on an integration by parts on the Fourier side exploiting the resonance structure of the Zakharov system. It transforms the system into a form which can be solved by a fixed point argument based on Strichartz and local smoothing estimates.
 	
We closely follow~\cite{GN14}, where the normal form transform for the Zakharov system was introduced. However, we note that in~\cite{GN14} and the subsequent works~\cite{GNW13, GLNW14, G16} both the Schr{\"o}dinger and the wave part of the Zakharov system were transformed. For our purposes here, it is sufficient to only transform the Sch{\"o}dinger part as in~\cite{S22} so that we content ourselves with this simpler form.

We first fix some notation.
Take an even function $\eta_0 \in C_c^\infty(\R)$ such that $0 \leq \eta_0 \leq 1$, $\eta_0(x) = 1$ for $|x| \leq \frac{5}{4}$, and $\eta_0(x) = 0$ for $|x| \geq \frac{8}{5}$. For every dyadic number $N \in 2^\Z$ we set
\begin{align*}
	\chi_N(\xi) = \eta_0(|\xi|/N) - \eta_0(2|\xi|/N), \qquad \chi_{\leq N}(\xi) = \eta_0(|\xi|/N)
\end{align*}
for all $\xi \in \R^3$.
To each of these symbols we associate the Littlewood-Paley projectors
\begin{align*}
	P_N f = \cF^{-1}(\chi_N \hat{f}), \qquad P_{\leq N} f = \cF^{-1}(\chi_{\leq N} \hat{f}),
\end{align*}
where $\hat{f} = \cF f$ denotes the Fourier transform of $f$.

We next define different frequency interactions by
 	\begin{align}
    \label{eq:DefParaproduct}
     (f g)_{LH} &:= \sum_{N \in 2^\Z} P_{\leq K^{-1}N} f \, P_N g, \hspace{1.5em} (f g)_{HL} := (g f)_{LH}, \nonumber\\
     (f g)_{HH} &:=  \sum_{\substack{N_1/N_2, N_2/N_1 < K, \\ N_1, N_2 \in 2^\Z}} P_{N_1} f \, P_{N_2} g,
    \end{align}
    where $K \in 2^\Z$ with $K \geq 2^5$ is a dyadic integer which will be fixed in the contraction argument. To distinguish resonant interactions, we also introduce
    \begin{align}
    \label{eq:DefXLalphaL}
     &(f g)_{1 L} := \sum_{\substack{2^{-1} \leq N \leq 2, \\ N \in 2^\Z}} P_N f \, P_{\leq K^{-1}N} g, \qquad (f g)_{XL} := \sum_{\substack{|\log_2 N| > 1, \\ N \in 2^\Z}} P_N f \, P_{\leq K^{-1} N} g.
    \end{align}
    Note that
    \begin{align*}
    	f g = (f g)_{HL} + (f g)_{HH} + (f g)_{LH} = (f g)_{X L} + (f g)_{1L} + (f g)_{HH} + (f g)_{LH}.
    \end{align*}
    For later use we set
    \begin{equation}
    	\label{eq:DefR}
    	(f g)_R := (f g)_{1L + HH + LH}.
\end{equation}
    Finally, we write $\cP_*$ for the symbol of the bilinear operator with index $*$, i.e.,
    \begin{align*}
    	\cF(fg)_*(\xi) := \int_{\R^3} \cP_*(\xi-\eta,\eta) \hat{f}(\xi - \eta) \hat{g}(\eta) \dd \eta
    \end{align*}
    with $* \in \{1L, XL, HH, LH, R\}$.

\begin{remark}
	\label{rem:DroppingRealPart}
	Since $\Re(v) = \frac{1}{2}( v + \overline{v})$, the nonlinearity $\Re(v) u$ in~\eqref{equa-ranZak-bc} can be written as $\frac{1}{2} v u + \frac{1}{2} \overline{v} u$. The term $\overline{v} u$ can be treated in the same way as $v u$ so that we replace $\Re(v) u$ by $v u$ in~\eqref{equa-ranZak-bc} for simplicity. Moreover, our arguments work for $\Re(\cT_{\cdot}(W_2))$ in the same way as for $\cT_{\cdot}(W_2)$, so that we also drop the real part here for notational convenience.
\end{remark}

    We write~\eqref{equa-ranZak-bc} with initial data $(u_0, v_0) \in H^1 \times L^2$ in the Duhamel formulation
    \begin{equation} \label{eq:RZakharovDuhamel}
	\left\{\aligned
	 u(t) &= e^{\imu t \Delta} u_0
                  - \imu \int_0^t e^{\imu (t-s)\Delta} (vu  - b \cdot \nabla u - c u + \mathcal{T}_\cdot(W_2)u)(s) \dd s, \\
     v(t) &= e^{\imu t |\nabla|} v_0 + \imu \int_0^t e^{\imu (t-s)|\nabla|} (|\nabla| |u|^2)(s) \dd s .
	\endaligned
	\right.
\end{equation}
	The normal form transform (for the Schr{\"o}dinger part) builds on the fact that the resonance function
	\begin{align*}
 		\resf(\xi-\eta,\eta) := |\xi|^2 + |\xi - \eta| - |\eta|^2
 	\end{align*} 	
 	does not vanish on the support of $\cP_{XL}$, since $1 \nsim |\xi - \eta| \sim |\xi| \gg |\eta|$ for $(\xi - \eta, \eta) \in \supp \cP_{XL}$. Taking the Fourier transform on both sides of \eqref{eq:RZakharovDuhamel}, we obtain
    	\begin{align*}
         \hat{u}(t) &= e^{-\imu t |\xi|^2} \hat{u}_0 - \imu \int_0^t e^{-\imu (t-s)|\xi|^2} \cF(vu)(s) \dd s
         + \imu \int_0^t e^{-\imu (t-s)|\xi|^2} \cF(b \cdot \nabla u + c u - \mathcal{T}_{\cdot}(W_2) u)(s) \dd s, \\
         \hat{v}(t) &= e^{\imu t |\xi|} \hat{v}_0 + \imu \int_0^t e^{\imu (t-s)|\xi|} |\xi|| \cF(|u|^2)(s) \dd s,
        \end{align*}
        and thus
        \begin{equation} \label{pt-wtu-wtv}
	\left\{\aligned
	& \partial_t(e^{\imu t |\xi|^2} \hat{u}(t,\xi)) = -\imu e^{\imu t |\xi|^2}\cF(v u)(t,\xi)
           + \imu e^{\imu t |\xi|^2}\cF(b \cdot \nabla u + c u - \mathcal{T}_{\cdot}(W_2)u)(t,\xi),  \\
	& \partial_t(e^{-\imu t |\xi|} \hat{v}(t,\xi)) = \imu e^{-\imu t |\xi|} |\xi| \cF(|u|^2)(t,\xi).
	\endaligned
	\right.
\end{equation}
 	Writing
 	\begin{align}
 	\label{eq:FTSchroedinger}
 		\hat{u}(t) &= e^{-\imu t |\xi|^2} \hat{u}_0 - \imu \int_0^t e^{-\imu (t-s)|\xi|^2} \cF(vu)_{XL}(s) \dd s - \imu \int_0^t e^{-\imu (t-s)|\xi|^2} \cF(vu)_{R}(s) \dd s \nonumber\\
 		&\qquad + \imu \int_0^t e^{-\imu (t-s)|\xi|^2}\cF(b \cdot \nabla u + c u - \mathcal{T}_{\cdot}(W_2)u)(s) \dd s =: I + II + III + IV,
 	\end{align}
 	we obtain
 	\begin{align*}
 		II &= - \imu \int_0^t \int_{\R^3} e^{-\imu (t-s)|\xi|^2} \cP_{XL}(\xi-\eta, \eta) \hat{v}(s,\xi-\eta) \hat{u}(s,\eta) \dd \eta \dd s \\
 		&= - \imu  e^{-\imu t|\xi|^2} \int_0^t \int_{\R^3} e^{\imu s (|\xi|^2 + |\xi - \eta| - |\eta|^2)} \cP_{XL}(\xi-\eta, \eta) [e^{-\imu s |\xi - \eta|}\hat{v}(s,\xi-\eta)] [e^{\imu s |\eta|^2} \hat{u}(s,\eta)] \dd \eta \dd s \\
 		&= - e^{-\imu t|\xi|^2} \int_0^t \int_{\R^3} \frac{\cP_{XL}(\xi-\eta, \eta)}{\resf(\xi-\eta,\eta)} \partial_s e^{\imu s \resf(\xi-\eta,\eta)} [e^{-\imu s |\xi - \eta|}\hat{v}(s,\xi-\eta)] [e^{\imu s |\eta|^2} \hat{u}(s,\eta)] \dd \eta \dd s.
 	\end{align*}
 	Integrating by parts, using \eqref{eq:RZakharovDuhamel} and writing $\bdyop$ for the bilinear operator
 	\begin{align}   \label{Omegab-def}
 		\bdyop(f,g) := \cF^{-1} \int_{\R^3} \frac{\cP_{XL}(\xi-\eta, \eta)}{\resf(\xi-\eta,\eta)} \hat{f}(\xi - \eta) \hat{g}(\eta) \dd \eta,
 	\end{align}
 	we get
 	\begin{align}  \label{II-normform}
 		II =& - \cF \bdyop(v,u)(t,\xi) + e^{-\imu t |\xi|^2} \cF\bdyop(v_0,u_0)(\xi)  \notag  \\
 			&+ e^{-\imu t|\xi|^2} \int_0^t \int_{\R^3} \frac{\cP_{XL}(\xi-\eta, \eta)}{\resf(\xi-\eta,\eta)} e^{\imu s \resf(\xi-\eta,\eta)} [e^{-\imu s |\xi - \eta|}\imu |\xi - \eta|\cF(|u|^2)(s,\xi - \eta)] [e^{\imu s |\eta|^2} \hat{u}(s,\eta)] \dd \eta \dd s  \notag  \\
 			&+ e^{-\imu t|\xi|^2} \int_0^t \int_{\R^3} \frac{\cP_{XL}(\xi-\eta, \eta)}{\resf(\xi-\eta,\eta)} e^{\imu s \resf(\xi-\eta,\eta)} [e^{-\imu s |\xi - \eta|}\hat{v}(s,\xi-\eta)]  \notag  \\
 			&\hspace{18em} \cdot [e^{\imu s |\eta|^2} (-\imu)\cF(v u - b \cdot \nabla u - cu + \mathcal{T}_{\cdot}(W_2)u)(s,\eta)] \dd \eta \dd s  \notag  \\
 			= & - \cF \bdyop(v,u)(t,\xi) + e^{-\imu t |\xi|^2} \cF\bdyop(v_0,u_0)(\xi)
 					+ \imu \cF \Big(\int_0^t e^{\imu (t-s)\Delta} \bdyop(|\nabla| |u|^2, u)(s) \dd s\Big)(\xi)  \notag  \\
 			&+ \imu \cF\Big(\int_0^t e^{\imu(t-s)\Delta} \bdyop(v, -v u + b \cdot \nabla u + cu - \mathcal{T}_{\cdot}(W_2)u)(s) \dd s \Big)(\xi).
 	\end{align}
 	Inserting this formula into~\eqref{eq:FTSchroedinger}, we have transformed system~\eqref{eq:RZakharovDuhamel} into
 	 \begin{equation}
    	\label{eq:RZakharovNormalForm}
    	\begin{aligned}
         u(t) &= e^{\imu t \Delta} u_0 + e^{\imu t \Delta} \bdyop(v_0,u_0) -\bdyop(v,u)(t)  - \imu \int_0^t e^{\imu (t-s)\Delta} (vu)_{R}(s) \dd s \\
         &\qquad + \imu \int_0^t e^{\imu (t-s)\Delta}(b \cdot \nabla u + c u - \mathcal{T}_{\cdot}(W_2)u)(s) \dd s
          + \imu \int_0^t e^{\imu (t-s)\Delta} \bdyop(|\nabla| |u|^2, u)(s) \dd s\\
         &\qquad
         + \imu \int_0^t e^{\imu(t-s)\Delta} \bdyop(v, -vu + b \cdot \nabla u + cu -\mathcal{T}_{\cdot}(W_2)u)(s) \dd s, \\
         v(t) &= e^{\imu t |\nabla|} v_0 + \imu \int_0^t e^{\imu (t-s)|\nabla|} (|\nabla| |u|^2)(s) \dd s .
        \end{aligned}
    \end{equation}

Correspondingly we also have
\begin{equation*}
	\left\{\aligned
	&  (\partial_t- \imu \Delta) (u+\bdyop(v,u))
	  = -\imu (vu)_{R} + \imu (b\cdot \na u + cu - \mathcal{T}_{\cdot}(W_2)u)
	   + \imu \bdyop(|\na||u|^2, u)  \\
	  &\qquad  \qquad \qquad \qquad \qquad \ \ \ \ \ + \imu \bdyop(v, - vu + b\cdot \na u + cu - \mathcal{T}_{\cdot}(W_2)u),  \\
	& (\partial_t - \imu |\na|) v = \imu |\na||u|^2.
	\endaligned
	\right.
\end{equation*}

\begin{remark}
\begin{enumerate}[leftmargin=*]
\item The above computations in \eqref{II-normform} also show that
\begin{align} \label{equa-Omega-vu}
	(\partial_t - \imu \Delta) \bdyop(v,u)
	= \imu (vu)_{XL} + \bdyop((\partial_t - \imu|\na|)v, u)
	  + \bdyop (v, (\partial_t -\imu \Delta)u).
\end{align}
\item Note that also~\eqref{equa-Zakha-sigma} is of the form~\eqref{equa-ranZak-bc} with $\cT_t(W_2)$ replaced by $\cT_{\sigma + t, \sigma}(W_2)$. In particular, the normal form for~\eqref{equa-Zakha-sigma} is given by~\eqref{eq:RZakharovNormalForm} with  $\cT_t(W_2)$ replaced by $\cT_{\sigma + t, \sigma}(W_2)$.
\item While we did not specify the regularity of the involved functions in the derivation above and one might think of sufficiently smooth solutions at first, we rigorously prove in Proposition~\ref{Prop-Equiv-Mild-Norm} in the appendix that the normal form~\eqref{eq:RZakharovNormalForm} is equivalent to the Duhamel formulation~\eqref{equa-ranZak-bc} in the energy space.
\end{enumerate}
\end{remark}

\section{Multilinear estimates}   \label{Sec-Multi-Esti}
 	
In this section we prove the key multilinear estimates
which we will use to solve the Zakharov system in its normal form formulation~\eqref{eq:RZakharovNormalForm}. We first introduce our functional framework and then estimate the boundary terms, the quadratic interactions, and the cubic interactions on the right-hand side of~\eqref{eq:RZakharovNormalForm}.

Throughout this section, $I$ denotes a bounded subinterval of $[0,\infty)$ and $t_0:=\inf I$.

\subsection{Functional setting}   \label{Subsec-Funct}
Our functional framework combines Strichartz and local smoothing norms. We start by introducing the lateral spaces we will use to capture the local smoothing effect of the Schr{\"o}dinger flow.
 	
 	Let  $\textbf{e} \in \Sp^{2}$ and $\mathcal{P}_{\vece}=\{\xi \in \R^{3} \, |\, \xi \cdot \textbf{e}=0 \}$ with the induced Euclidean measure.
For $p, q \in [1,\infty]$, we define
\begin{equation} \label{Lepq-def}
\| f \|_{L_{\textbf{e}}^{p,q}(I \times \R^3)}
: = \Bigl( \int_{\R} \Bigl( \int_{I \times \mathcal{P}_{\textbf{e}}} |f(t, r \textbf{e} + y)|^q \dd t \dd y \Bigr)^{p/q} \dd r \Bigr)^{1/p}
\end{equation}
with the usual adaptions if $p = \infty$ or $q = \infty$.

We fix a nonnegative and symmetric function $\phi \in C_0^\infty(\R)$ such that $\phi(r)=0$ if $|r|\leq \frac{1}{8}$ or $|r|>4$ and $\phi(r)=1$ if $\frac{1}{4} \leq |r| \leq 2$, and set $\phi_N(r)=\phi(r/N)$. Consequently, we have
\begin{equation} \label{eq:DecompositionIdentity}
\prod_{j=1}^{3}(1-\phi_N(\xi_j))=0
\end{equation}
for all $\xi \in \R^3$ with $N/2 < |\xi| < 2 N$.
We finally define  $P_{N,\vece} :=\cF_{x}^{-1} \phi_N(\xi \cdot \vece) \cF_{x}$.
Note that~\eqref{eq:DecompositionIdentity} implies
 \begin{equation}\label{eq:dec-pe}
		P_N f = \sum_{j=1}^{3} P_{N,\vece_j} \Big[\prod_{l=1}^{j-1} (1-P_{N,\vece_l})\Big]P_N f,
	\end{equation}
	where $\vece_1, \vece_2, \vece_3$ denotes the standard basis of $\R^3$.

	We introduce the following norms. For every $N \in 2^\Z$ we set
\begin{align} \label{X-def}
		\|f\|_{\X(I)} := \|f\|_{L^\infty(I; H^1_x)} + \Big(\sum_{N \in 2^\Z} \|P_N f\|_{\X_N(I)}^2 \Big)^{\frac{1}{2}}
	\end{align}
with
	\begin{align*}
		\|f\|_{\X_N(I)} := \langle N \rangle \|f\|_{L^2(I;L^6_x)}
        +  \sum_{j = 1}^3 N^{\frac{3}{2}} \|P_{N, \vece_j} f\|_{L^{\infty,2}_{\vece_j}(I\times \bbr^3)},
	\end{align*}
	and
	\begin{align*}
       \|f\|_{\Y(I)} := \|f\|_{L^\infty(I; L^2)}.
\end{align*}
	
In order to estimate the nonlinear terms, we introduce
	\begin{align}  \label{G-def}
		\|g\|_{\G(I)} := \Big(\sum_{N \in 2^Z} \|P_N g\|_{\G_N(I)}^2\Big)^{\frac{1}{2}}
	\end{align}
with 	
\begin{align*}
		\|g\|_{\G_N(I)} := \inf_{g = g_1 + g_2 + g_3}\Big( \langle N \rangle \|g_1\|_{L^1(I;L^2_x)}
     + \langle N \rangle \|g_2\|_{L^{\frac{8}{5}}(I;L^{\frac{4}{3}}_x)}
      + \sum_{j = 1}^3 N^{\frac{1}{2}} \|g_3\|_{L^{1,2}_{\vece_j}(I\times \bbr^3)} \Big), \ \ N \in 2^{\Z}.
	\end{align*}
We further set	
\begin{align} \label{eq:DefXYG}
	\X(I) &:= \{f \in C(I; H^1_x) \colon \|f\|_{\X(I)} < \infty\}, \qquad \Y(I) := L^\infty(I; L^2_x), \qquad
	\G(I) := \{g \in L^1(I; L^2_x) \colon \|g\|_{\G(I)} < \infty\}.
\end{align}
	
We next provide the estimates for the linear flow of the Schr{\"o}dinger equation which we will use in the following. Part~\ref{it:LinStrichartz} contains the classical Strichartz estimate, while the local smoothing estimate in part~\ref{it:LinLocalSmoothing} follows from~(4.18) in~\cite{IK07}.

\begin{lemma} [Strichartz and local smoothing estimates] \label{lem:LinearEstimates}
Let $N \in 2^\Z$, $\vece \in \Sp^{2}$, and $(q,p)$ be Schr{\"o}dinger admissible,
i.e., $p,q\in [2,\infty]$ with $\frac{2}{q} + \frac{3}{p}= \frac{3}{2}$.
Then, for any $f \in L^2$ we have
\begin{enumerate}
  \item \label{it:LinStrichartz} Strichartz estimate:
				\begin{align*}
					\|e^{\imu (\cdot-t_0) \Delta} P_N f \|_{L^q(I;L^p_x)} \lesssim \|P_N f\|_{L^2}.
				\end{align*}
  \item \label{it:LinLocalSmoothing} Local smoothing estimate:
				\begin{align*}
					\|e^{\imu (\cdot-t_0) \Delta} P_{N, \vece} f\|_{L^{\infty,2}_{\vece}(I\times \bbr^d)} \lesssim N^{-\frac{1}{2}} \|f\|_{L^2}.
				\end{align*}
\end{enumerate}
\end{lemma}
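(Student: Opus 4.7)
The plan is to handle the two estimates independently, since both are classical Fourier-analytic results; I expect no substantive obstacle.

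For the Strichartz estimate in~\ref{it:LinStrichartz}, I would first observe that $P_N$ commutes with $e^{\imu t\Delta}$, so it suffices to prove the unlocalized bound $\|e^{\imu(\cdot-t_0)\Delta}g\|_{L^q(I;L^p_x)}\lesssim\|g\|_{L^2_x}$ for every admissible pair. This follows from the standard template: the dispersive estimate $\|e^{\imu t\Delta}f\|_{L^\infty_x}\lesssim|t|^{-3/2}\|f\|_{L^1_x}$ combined with $L^2$-conservation and interpolation yields $\|e^{\imu t\Delta}f\|_{L^p_x}\lesssim|t|^{-3(1/2-1/p)}\|f\|_{L^{p'}_x}$, after which a $TT^*$ argument together with Hardy--Littlewood--Sobolev handles the non-endpoint pairs; the endpoint $(q,p)=(2,6)$ is supplied by the Keel--Tao theorem. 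Since this estimate is valid globally in time, it holds a fortiori on the bounded interval $I$.

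For the local smoothing estimate in~\ref{it:LinLocalSmoothing}, by rotation invariance I would take $\vece=\vece_1$. Decomposing $x=r\vece_1+y$ with $y\in\R^2$ and extending the $t$-integration from $I$ to $\R$ as an upper bound, Plancherel in $y$ for each fixed $r$ reduces the quantity to be controlled to
\begin{equation*}
  \sup_{r\in\R}\int_\R\!\int_{\R^2}\Big|\int_\R e^{\imu r\xi_1-\imu t\xi_1^2}\,\phi_N(\xi_1)\,\hat f(\xi_1,\eta)\dd\xi_1\Big|^2\dd\eta\dd t,
\end{equation*}
where the unit-modulus factor $e^{-\imu t|\eta|^2}$ has dropped out. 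For fixed $\eta$ and $r$, Plancherel in $t$ combined with the change of variables $\sigma=\xi_1^2$, whose Jacobian is $\sim N^{-1}$ on $\supp\phi_N$, gains a factor $N^{-1}$; integrating in $\eta$ and applying Plancherel in $\xi$ and then taking a square root yields the desired $N^{-1/2}\|f\|_{L^2}$ bound. This is essentially the computation behind~(4.18) of~\cite{IK07}, which I would cite directly.

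The only delicate point, rather than a true obstacle, is to verify that the directional projector $P_{N,\vece}$, defined through the one-dimensional symbol $\phi_N(\xi\cdot\vece)$, interacts cleanly with the above Fourier computation; once $\xi_1$ is identified with $\xi\cdot\vece$ the Jacobian argument proceeds unchanged, and this is the only place where the specific form of $P_{N,\vece}$ (as opposed to, say, an isotropic $P_N$) actually enters.
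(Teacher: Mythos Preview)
Your proposal is correct and aligns with the paper's treatment: the paper does not give a detailed proof of this lemma but simply records part~\ref{it:LinStrichartz} as the classical Strichartz estimate and refers part~\ref{it:LinLocalSmoothing} to~(4.18) of~\cite{IK07}. Your outline supplies exactly the standard arguments behind these citations, so there is nothing to add or correct.
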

Here and in the following we write $A \lesssim B$ if there is a constant $C > 0$ such that $A \leq C B$.
Note that Lemma~\ref{lem:LinearEstimates} immediately implies
\begin{equation}
	\label{eq:HomEstimate}
	\|e^{\imu (\cdot-t_0) \Delta} f \|_{\X(I)} \lesssim \|f\|_{H^1_x}
\end{equation}
for all $f \in H^1$. We also need to estimate the inhomogeneous terms in our functions spaces.
\begin{lemma}[Inhomogeneous estimate]
	\label{lem:InhomEstimate}
For any $g\in \G(I)$ we have
  \begin{align}
	\label{eq:EstDuhamel}
		\Big\| \int_{t_0}^\cdot e^{\imu (\cdot-s) \Delta} g(s) \dd s \Big\|_{\X(I)} \lesssim \|g\|_{\G(I)}.
	\end{align}
\end{lemma}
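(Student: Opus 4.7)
My plan is to reduce \eqref{eq:EstDuhamel} to a frequency-localized bound via the Littlewood-Paley square function and then treat each of the three pieces in the near-optimal decomposition of $P_N g$ from $\|P_N g\|_{\G_N(I)}$ by a $TT^{*}$ argument based on the linear estimates of Lemma~\ref{lem:LinearEstimates}. Since
\begin{align*}
\|f\|_{L^\infty(I;H^1_x)}^2 \lesssim \sum_{N\in 2^{\Z}} \langle N\rangle^2 \|P_N f\|_{L^\infty(I;L^2_x)}^2
\end{align*}
and the Duhamel operator commutes with $P_N$, it suffices to prove, for each dyadic $N$, the frequency-localized bound
\begin{align*}
\langle N\rangle \Big\|P_N \int_{t_0}^\cdot e^{\imu(\cdot-s)\Delta} g(s)\dd s\Big\|_{L^\infty(I;L^2_x)} + \Big\|P_N \int_{t_0}^\cdot e^{\imu(\cdot-s)\Delta} g(s)\dd s\Big\|_{\X_N(I)} \lesssim \|P_N g\|_{\G_N(I)},
\end{align*}
and then to square-sum in $N$ using almost orthogonality.

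For the frequency-localized bound fix $N$ and pick a near-optimal decomposition $P_N g=g_1+g_2+g_3$. For $g_1 \in L^1(I;L^2_x)$ the $L^\infty L^2$-bound is the energy identity, the $\langle N\rangle\|\cdot\|_{L^2 L^6}$-bound follows from Lemma~\ref{lem:LinearEstimates}\ref{it:LinStrichartz} via $TT^{*}$ together with the Christ-Kiselev lemma (needed to handle the causal truncation $\int_{t_0}^t$), and the lateral bound is obtained by coupling Lemma~\ref{lem:LinearEstimates}\ref{it:LinLocalSmoothing} with Strichartz through $TT^{*}$; the resulting $N^{-1/2}$ absorbs the weight $N^{3/2}$ in $\X_N$ into the $\langle N\rangle\|g_1\|_{L^1 L^2}$ factor of $\G_N$. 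The same strategy applies to $g_2 \in L^{8/5}(I;L^{4/3}_x)$, since the pair $(8/3,4)$ is Schr\"odinger admissible in $d=3$ ($2\cdot 3/8+3/4=3/2$); dual Strichartz simply replaces the energy identity. For $g_3 \in L^{1,2}_{\vece_j}$ the starting point is the $TT^{*}$-dual of local smoothing,
\begin{align*}
\Big\|\int_I e^{-\imu s\Delta} P_{N,\vece_j} g_3(s)\dd s\Big\|_{L^2_x} \lesssim N^{-1/2}\|g_3\|_{L^{1,2}_{\vece_j}(I\times \bbr^3)},
\end{align*}
which when composed with Strichartz or with local smoothing yields the $L^\infty L^2$, $L^2 L^6$, and $L^{\infty,2}_{\vece_j}$ norms of $P_N\int_{t_0}^{\cdot} e^{\imu(\cdot-s)\Delta} g_3\dd s$ with scalings $N^{-1/2}$, $N^{-1/2}$, and $N^{-1}$ respectively; combined with the weights $\langle N\rangle$ and $N^{3/2}$ in $\X_N$, these produce the factor $N^{1/2}\|g_3\|_{L^{1,2}_{\vece_j}}$ matching $\G_N$. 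The decomposition \eqref{eq:dec-pe} will be used throughout to split the dyadic block $P_N$ into the three lateral blocks $P_{N,\vece_j}$, so that each piece can be estimated in its natural direction.

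The main technical obstacle I anticipate is the Christ-Kiselev truncation inside the anisotropic lateral spaces $L^{p,q}_{\vece}$, which fall outside the classical $L^q_t L^p_x$ mixed-norm setting where the lemma is most commonly applied. The strict inequality of the relevant time exponents is satisfied in each of the cases above, but some care is required to pass to a target norm such as $L^{\infty,2}_{\vece_j}$ in which the outer $L^\infty$ is taken over a spatial variable rather than time; this step is reminiscent of the analogous arguments developed in~\cite{BIKT11} for the Schr\"odinger map problem, which the paper already invokes.
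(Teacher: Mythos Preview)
Your proposal is correct and follows essentially the same approach as the paper: reduce to frequency-localized bounds, handle the Strichartz-type inputs $g_1,g_2$ via dual Strichartz combined with the Christ--Kiselev lemma, and handle the lateral input $g_3$ via the dual of local smoothing. The paper's proof is simply more terse: for the Strichartz pieces it invokes Lemma~\ref{lem:LinearEstimates}, the dual Strichartz estimate, and the Christ--Kiselev lemma in the form of \cite[Lemma~B.3]{WHH09}, and for the lateral piece it directly cites \cite[Proposition~3.8]{BIKT11}, which packages exactly the $L^{1,2}_{\vece_j}\to\X_N$ bound (including the truncated-integral versions in the anisotropic norms) that you identify as the main technical obstacle. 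In other words, the Christ--Kiselev issue in lateral spaces that you flag is not re-derived in the paper but delegated to the existing machinery of \cite{BIKT11}.
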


\begin{proof}
	Note that it is enough to prove for any $N \in 2^\Z$
	\begin{align*}
		\Big\| \int_{t_0}^\cdot e^{\imu (\cdot-s) \Delta} P_N g(s) \dd s \Big\|_{\X_N(I)} &\lesssim \langle N \rangle \|P_N g\|_{L^{q'}(I; L^{p'}_x)}, \\
		\Big\| \int_{t_0}^\cdot e^{\imu (\cdot-s) \Delta} P_N g(s) \dd s \Big\|_{\X_N(I)} &\lesssim \sum_{j = 1}^3 N^{\frac{1}{2}} \|P_N g\|_{L^{1,2}_{\vece_j}(I\times \bbr^3)}
	\end{align*}
	for $(q',p') = (1,2)$ and for $(q',p') = (\frac{8}{5}, \frac{4}{3})$. The first estimate (both for $(q',p') = (1,2)$ and for $(q',p') = (\frac{8}{5}, \frac{4}{3})$) follows from Lemma~\ref{lem:LinearEstimates}, the dual Strichartz estimate, and the Christ-Kiselev lemma in the form of Lemma~B.3 in~\cite{WHH09}. The second estimate follows as Proposition~3.8 in~\cite{BIKT11}.
\end{proof}

\subsection{Boundary estimates}
We next derive the multilinear estimates for the terms appearing on the right-hand side of~\eqref{eq:RZakharovNormalForm}. We start with the boundary terms from the integration by parts argument. To that purpose, we first introduce the bilinear operators
\begin{equation}
	\label{eq:DefTm}
	T_m(f,g)(x) = \int_{\R^6} m(\xi,\eta) \hat{f}(\xi) \hat{g}(\eta) e^{\imu x (\xi + \eta)} \dd \xi \dd \eta,\ \ x \in \R^3,
\end{equation}
for $m \in L^{\infty}(\R^6)$ and $f,g \in \Schw(\R^3)$.

A crucial tool in estimating the boundary terms is the following
Coifman-Meyer-type bilinear multiplier estimate, which was proven in~\cite[Lemma~3.5]{GN14}.

\begin{lemma}[Bilinear multiplier estimate]
	\label{lem:BilinearMultiplierEstimate}
	Let  $m \in C^{\infty}(\R^{6})$ be bounded and satisfy
	\begin{equation*}
		|\partial^\alpha_\xi \partial^\beta_\eta m(\xi,\eta)| \lesssim_{\alpha,\beta} |\xi|^{-|\alpha|} |\eta|^{-|\beta|}
	\end{equation*}
	for all $\xi,\eta \in \R^3$ and $\alpha,\beta \in \N_0^3$. Take $p,q,r \in [1,\infty]$ with $\frac{1}{r} = \frac{1}{p} + \frac{1}{q}$.
	Then, for any  $f \in L^p(\R^3)$, $g \in L^q(\R^3)$, and $N_1, N_2 \in 2^\Z$, we have
	\begin{equation*}
		\|T_m(P_{N_1} f, P_{N_2} g) \|_{L^r} \lesssim \|P_{N_1} f\|_{L^p} \|P_{N_2} g\|_{L^q},
	\end{equation*}
	where $T_m$ is the operator defined in~\eqref{eq:DefTm}.
\end{lemma}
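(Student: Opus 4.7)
The plan is to follow the classical Coifman--Meyer strategy: reduce the boundedness of $T_m$ to the decay of Fourier coefficients of a suitably truncated and rescaled version of the symbol $m$. The key observation is that the Mihlin-type hypothesis is scale invariant, so after rescaling the frequency annuli to unit scale, the symbol becomes smooth, compactly supported, and has derivatives bounded uniformly in $N_1,N_2$, which is precisely what makes its Fourier series rapidly convergent.

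First I would fix a bump function $\widetilde\chi\in C_c^\infty(\R^3)$ equal to $1$ on the support of $\chi_1$ and replace $m$ in the definition of $T_m(P_{N_1}f,P_{N_2}g)$ by the truncation
\begin{equation*}
m_{N_1,N_2}(\xi,\eta) := m(\xi,\eta)\,\widetilde\chi(\xi/N_1)\,\widetilde\chi(\eta/N_2),
\end{equation*}
which is legitimate since $\widehat{P_{N_1}f}$ and $\widehat{P_{N_2}g}$ are supported in $|\xi|\sim N_1$ and $|\eta|\sim N_2$, respectively. The product rule combined with the hypothesis on $m$ yields $|\partial_\xi^\alpha \partial_\eta^\beta m_{N_1,N_2}(\xi,\eta)|\lesssim_{\alpha,\beta} N_1^{-|\alpha|}N_2^{-|\beta|}$ uniformly in $N_1,N_2$. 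Rescaling via $\widetilde m(\xi,\eta):=m_{N_1,N_2}(N_1\xi,N_2\eta)$ then produces a smooth symbol supported in a fixed annulus contained in some cube $[-R,R]^6$, with all derivatives uniformly bounded in $N_1,N_2$. Expanding $\widetilde m$ in a Fourier series on $[-R,R]^6$ and integrating by parts repeatedly yields
\begin{equation*}
\widetilde m(\xi,\eta) = \sum_{(k,\ell)\in\Z^6} c_{k,\ell}\, e^{\imu\pi(k\cdot\xi + \ell\cdot\eta)/R}, \qquad |c_{k,\ell}|\lesssim_M (1+|k|+|\ell|)^{-M}
\end{equation*}
for every $M>0$.

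Unscaling this expansion and substituting into the definition of $T_m$, one may exchange summation with the Fourier integral (justified by absolute convergence on the compact frequency supports) and recognize each term as a product of translates:
\begin{equation*}
T_m(P_{N_1}f,P_{N_2}g)(x) = \sum_{(k,\ell)\in\Z^6} c_{k,\ell}\,(P_{N_1}f)\!\left(x+\tfrac{\pi k}{N_1 R}\right)(P_{N_2}g)\!\left(x+\tfrac{\pi\ell}{N_2 R}\right).
\end{equation*}
An application of H\"older's inequality with $1/r=1/p+1/q$ combined with the translation invariance of the $L^p$ and $L^q$ norms yields
\begin{equation*}
\|T_m(P_{N_1}f,P_{N_2}g)\|_{L^r}\leq \Big(\sum_{(k,\ell)\in\Z^6}|c_{k,\ell}|\Big)\,\|P_{N_1}f\|_{L^p}\,\|P_{N_2}g\|_{L^q},
\end{equation*}
and the prefactor is finite thanks to the rapid decay of $(c_{k,\ell})$. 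The only genuinely delicate point is the uniform-in-$(N_1,N_2)$ control of all derivatives of $\widetilde m$; this is the step that depends crucially on the scaling-homogeneous form of the Mihlin hypothesis and is the only place where the assumption on $m$ enters quantitatively. Everything else is routine bookkeeping, so I do not anticipate a substantive obstacle beyond setting up the truncation and the rescaling correctly.
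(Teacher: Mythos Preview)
Your argument is correct and is exactly the standard Coifman--Meyer kernel expansion: truncate the symbol to the relevant dyadic annuli, rescale to unit scale, expand the resulting compactly supported smooth function in a Fourier series, and exploit the rapid decay of the coefficients together with H\"older's inequality and translation invariance of $L^p$. The paper itself does not give a proof of this lemma but simply cites \cite[Lemma~3.5]{GN14}, where the same argument is carried out; so your proposal is essentially a self-contained write-up of the proof the paper defers to the literature.
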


We will use the above lemma to estimate the bilinear operator $\bdyop$ given by \eqref{Omegab-def}.
Roughly speaking, Lemma~\ref{lem:BilinearMultiplierEstimate} yields that $\bdyop$ acts like
\begin{align} \label{Omegab-na}
	 \bdyop(f,g) \sim |\nabla|^{-1} \langle \nabla \rangle^{-1} (fg)_{XL}.
\end{align}

In fact, following \cite{GN14, S22}, we will always apply Lemma~\ref{lem:BilinearMultiplierEstimate} to $|\nabla| \langle \nabla \rangle \bdyop(\cdot, \cdot)$, which is a bilinear Fourier multiplier with symbol
\begin{align*}
			m(\xi,\eta) = \frac{|\xi + \eta| \langle \xi + \eta \rangle \sum_{|\log_2 N| > 1} \chi_N(\xi) \chi_{\leq K^{-1} N}(\eta)}{|\xi + \eta|^2 + |\xi| - |\eta|^2}.
		\end{align*}
		We note that within the support of $m$, the numerator and the denominator of this symbol are both of size $N$ in the case $N < \frac{1}{2}$ and both of size $N^2$ in the case $N > 2$. A short computation then shows that $m$ satisfies the assumptions of Lemma~\ref{lem:BilinearMultiplierEstimate}.

We obtain the following estimates for the boundary terms.
\begin{lemma} [Boundary estimates]   \label{lem:Bdy}
We have
	\begin{align}
		& \|e^{\imu (\cdot-t_0) \Delta} \bdyop(v_0,u_0)\|_{\X(I)} \lesssim K^{-1} \|v_0\|_{L^2} \|u_0\|_{H^1}, \label{eq:EstBdyInitial} \\
		& \|\bdyop(v,u)\|_{\X(I)} \lesssim (K^{-1} + |I|^{\frac{1}{8}})\|v\|_{\Y(I)} \|u\|_{\X(I)}.    \label{eq:EstBdyTime}
	\end{align}
\end{lemma}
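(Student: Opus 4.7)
The proof will rely on the Coifman-Meyer-type estimate in Lemma~\ref{lem:BilinearMultiplierEstimate} applied to the symbol $m := \mathcal{P}_{XL}/\resf$ of $\bdyop$. The key observation is that on the support of $\mathcal{P}_{XL}$ one has $|\xi - \eta| \sim |\xi| \gg |\eta|$, so the resonance function satisfies
\begin{equation*}
\bigl| \resf(\xi - \eta, \eta) \bigr| = \bigl| |\xi|^2 + |\xi - \eta| - |\eta|^2 \bigr| \sim |\xi - \eta| \langle \xi - \eta \rangle,
\end{equation*}
where the lower-order term $|\xi-\eta|$ dominates when $|\xi-\eta| \leq 1/2$ and $|\xi|^2$ dominates when $|\xi-\eta| \geq 2$. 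Hence, after rescaling the dyadic pieces, the multiplier factors as $1/(N \langle N \rangle)$ times a bounded Coifman-Meyer symbol, and Lemma~\ref{lem:BilinearMultiplierEstimate} yields
\begin{equation*}
\|P_N \bdyop(v, u)\|_{L^r_x} \lesssim \frac{1}{N \langle N \rangle} \|P_{\sim N} v\|_{L^p_x} \|P_{\leq K^{-1} N} u\|_{L^q_x}
\end{equation*}
for $1/r = 1/p + 1/q$, where the output frequency is supported in a dyadic shell of size $\sim N$.

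For \eqref{eq:EstBdyInitial}, the plan is to use \eqref{eq:HomEstimate} to reduce to proving $\|\bdyop(v_0,u_0)\|_{H^1} \lesssim K^{-1} \|v_0\|_{L^2} \|u_0\|_{H^1}$. Choosing $(p,q,r) = (2, \infty, 2)$, Bernstein together with Sobolev embedding gives
\begin{equation*}
\|P_{\leq K^{-1}N} u_0\|_{L^\infty} \lesssim
\begin{cases}
(K^{-1}N)^{1/2} \|u_0\|_{H^1}, & N \geq K, \\
(K^{-1}N)^{3/2} \|u_0\|_{L^2}, & N < K.
\end{cases}
\end{equation*}
Multiplying by $\langle N \rangle$ and square-summing over dyadic $N$ with $|\log_2 N| > 1$, the Littlewood-Paley orthogonality $\sum_N \|P_{\sim N} v_0\|_{L^2}^2 \lesssim \|v_0\|_{L^2}^2$ combined with the geometric sums $\sum_{N \geq K} N^{-1} \sim K^{-1}$ and $\sum_{N \leq K} N \sim K$ produces the required $K^{-2}$ factor in the squared $H^1$ norm, hence $K^{-1}$ in $H^1$.

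For \eqref{eq:EstBdyTime}, the plan is to split the $\X(I)$ norm of $\bdyop(v,u)$ into the $L^\infty_t H^1_x$ part and the dyadic $\X_N(I)$ sum. The $L^\infty_t H^1_x$ contribution is handled by applying the argument for \eqref{eq:EstBdyInitial} pointwise in $t$ and taking the supremum, which yields the $K^{-1}\|v\|_\Y \|u\|_\X$ term. For the Strichartz component $\langle N \rangle \|P_N \bdyop(v,u)\|_{L^2_t L^6_x}$, I apply the bilinear estimate at fixed time, place $v \in L^\infty_t L^2_x$ via Bernstein, and control $u$ using the Strichartz and $H^1$-pieces of $\X(I)$; a time H\"older inequality of the form $\|g\|_{L^2(I)} \leq |I|^{1/8} \|g\|_{L^{8/3}(I)}$, tailored to the Schr\"odinger-admissible pair $(8/3, 4)$, extracts the $|I|^{1/8}$ factor. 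The local-smoothing components $N^{3/2} \|P_{N, \vece_j} \bdyop(v,u)\|_{L^{\infty,2}_{\vece_j}}$ are treated analogously via the lateral H\"older inequality $\|vu\|_{L^{\infty,2}_{\vece_j}} \leq \|v\|_{L^\infty_{t,x}} \|u\|_{L^{\infty,2}_{\vece_j}}$, combined with Bernstein on $v$ (controlled via $\Y(I)$) and the $\X_N$ local-smoothing piece of $u$, again paying a suitable positive power of $|I|$ from a time H\"older step.

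The main technical obstacle is verifying that the Coifman-Meyer output-frequency structure is compatible with the anisotropic lateral norms $L^{\infty,2}_{\vece_j}$: since $\bdyop$ is a bilinear Fourier multiplier rather than a pointwise product, some care is required to propagate the output-frequency localization through the multiplier, which is handled by the decomposition~\eqref{eq:dec-pe} expressing $P_N$ in terms of directional projections $P_{N, \vece_j}$. A secondary subtlety is the case split $N \gtrless K$ in the dyadic summation, which is needed to convert the Bernstein-Sobolev losses $(K^{-1}N)^{1/2}$ and $(K^{-1}N)^{3/2}$ into the final gain of $K^{-1}$ after square summation.
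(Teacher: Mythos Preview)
Your treatment of~\eqref{eq:EstBdyInitial} and of the $L^\infty_t H^1_x$ and $L^2_t L^6_x$ components of~\eqref{eq:EstBdyTime} is essentially the paper's argument; the case split $N\gtrless K$ is a harmless bookkeeping variant of the Young-for-series step the paper uses, and both routes yield the same $K^{-1}$ gain.

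The gap is in the local-smoothing component of~\eqref{eq:EstBdyTime}. Two issues make your proposed route fail as stated. First, the ``lateral H\"older'' inequality $\|vu\|_{L^{\infty,2}_{\vece_j}}\leq \|v\|_{L^\infty_{t,x}}\|u\|_{L^{\infty,2}_{\vece_j}}$ applies to pointwise products, but $\bdyop$ is a bilinear Fourier multiplier; Lemma~\ref{lem:BilinearMultiplierEstimate} is stated for isotropic Lebesgue spaces and does not immediately transfer to the anisotropic lateral norms, and the decomposition~\eqref{eq:dec-pe} only rewrites the \emph{output} projector $P_N$ in directional pieces --- it does not turn the multiplier into a pointwise product. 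Second, and more fundamentally, there is no source for a positive power of $|I|$: you place $P_N v$ in $L^\infty_{t,x}$ and the low-frequency part of $u$ in the local-smoothing piece $L^{\infty,2}_{\vece_j}$, whose inner time exponent is already $L^2$; since the target is also $L^2$ in time, there is no H\"older room left. Any time gain in $\X$ comes from the Strichartz component, not from local smoothing.

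The paper's device is different and avoids both problems: for the output piece $P_N P_{N,\vece_j}\bdyop(v,u)$ it first applies Minkowski and one-dimensional Bernstein in the $\vece_j$-direction to the \emph{output} (which is frequency-localized at scale $N$), obtaining
\[
N^{3/2}\|P_{N,\vece_j}\bdyop(P_N v,P_{\leq K^{-1}N}u)\|_{L^{\infty,2}_{\vece_j}}
\lesssim N^{2}\|\bdyop(P_N v,P_{\leq K^{-1}N}u)\|_{L^2(I\times\bbr^3)}.
\]
This reduces the estimate to an isotropic $L^2_{t,x}$ bound, where Lemma~\ref{lem:BilinearMultiplierEstimate} applies directly with $(p,q,r)=(2,\infty,2)$. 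From there one proceeds exactly as in the $L^2_tL^6_x$ case: Bernstein $\|P_{N_1}u\|_{L^\infty_x}\lesssim N_1^{3/4}\|P_{N_1}u\|_{L^4_x}$, sum in $N_1$, and extract $|I|^{1/8}$ via $\|\langle\nabla\rangle u\|_{L^2(I;L^4_x)}\leq |I|^{1/8}\|\langle\nabla\rangle u\|_{L^{8/3}(I;L^4_x)}$. In particular, the local-smoothing part of the $\X$-norm of $u$ is never used here.
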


\begin{proof}
By~\eqref{eq:HomEstimate}, it is enough to show
\begin{equation}
\label{eq:ReductionBdyEstInitial}
	\| \bdyop(v_0, u_0)\|_{H^1} \lesssim K^{-1} \|v_0\|_{L^2} \|u_0\|_{H^1}
\end{equation}
in order to obtain~\eqref{eq:EstBdyInitial}.
To this end, employing Lemma~\ref{lem:BilinearMultiplierEstimate}, we infer
\begin{align}
\label{eq:EstBdyOpInitial}
	\| \bdyop(v_0, u_0)\|_{H^1} &\lesssim \Big(\sum_{N \in 2^\Z} \|\langle \nabla \rangle \bdyop(P_N v_0, P_{\leq K^{-1} N} u_0)\|_{L^2}^2 \Big)^{\frac{1}{2}} \nonumber \\
	&\lesssim \Big(\sum_{N \in 2^\Z} \Big(\sum_{N_1 \leq K^{-1} N} \||\nabla| \langle \nabla \rangle \bdyop(N^{-1} P_N v_0, P_{N_1} u_0)\|_{L^2}\Big)^2 \Big)^{\frac{1}{2}} \nonumber\\
	&\lesssim \Big(\sum_{N \in 2^\Z} \Big(\sum_{N_1 \leq K^{-1} N} N^{-1} \| P_N v_0 \|_{L^2} \|P_{N_1} u_0\|_{L^\infty}\Big)^2 \Big)^{\frac{1}{2}} \nonumber\\
	&\lesssim \|v_0\|_{L^2} \Big(\sum_{N \in 2^\Z} \Big(\sum_{N_1 \leq K^{-1} N} N^{-1}N_1 \|N_1^{\frac{1}{2}} P_{N_1} u_0\|_{L^2}\Big)^2 \Big)^{\frac{1}{2}} \notag \\
    & \lesssim K^{-1} \|v_0\|_{L^2} \|u_0\|_{H^1},
\end{align}
where we used Young's inequality for series in the last step.

In order to prove~\eqref{eq:EstBdyTime}, we first note that arguing as in~\eqref{eq:EstBdyOpInitial} we get
\begin{equation}
	\label{eq:EstBdyOpTimeH1}
	\| \bdyop(v,u)\|_{L^\infty(I;H^1_x)}
     \lesssim K^{-1} \|v\|_{L^\infty(I;L^2_x)} \|u\|_{L^\infty(I;H^1_x)} \lesssim K^{-1} \|v\|_{\Y(I)} \|u\|_{\X(I)}.
\end{equation}
To control the second component of the $\X(I)$-norm, we use Lemma~\ref{lem:BilinearMultiplierEstimate} again to derive
\begin{align}
	\| \langle \nabla \rangle \bdyop(v,u)\|_{L^2(I;\dot{B}^0_{6,2})}
	&\lesssim \Big\|\Big(\sum_{N \in 2^\Z} \Big(\sum_{N_1 \leq K^{-1} N} \||\nabla| \langle \nabla \rangle \bdyop(N^{-1} P_N v, P_{N_1} u)\|_{L^6_x}\Big)^2  \Big)^{\frac{1}{2}} \Big\|_{L^2(I)} \nonumber\\
	&\lesssim \Big\|\Big(\sum_{N \in 2^\Z} \Big(\sum_{N_1 \leq K^{-1} N} \|N^{-1} P_N v\|_{L^6_x} \|P_{N_1} u\|_{L^\infty_x}\Big)^2  \Big)^{\frac{1}{2}} \Big\|_{L^2(I)} \nonumber\\
	&\lesssim \Big\|\Big(\sum_{N \in 2^\Z} \| P_N v\|_{L^2_x}^2 \Big(\sum_{N_1 \leq K^{-1} N} N_1^{\frac{3}{4}} \|P_{N_1} u\|_{L^4_x}\Big)^2  \Big)^{\frac{1}{2}} \Big\|_{L^2(I)}  \nonumber\\
	&\lesssim \|v\|_{\Y(I)} \|\langle \nabla \rangle u\|_{L^2(I;L^4_x)} \nonumber\\
	 &\lesssim |I|^{\frac{1}{8}} \|v\|_{\Y(I)} \|u\|_{\X(I)}, \label{eq:EstBdyOpTimeL2L6}
\end{align}
where we exploited that $\sum_{N_1 \in 2^\Z} N_1^{\frac{3}{4}} \|P_{N_1} u \|_{L^4_x} \lesssim \|\langle \nabla \rangle u\|_{L^4_x}$
and $\dot{B}^0_{4,2} \hookrightarrow L^4$.
It remains to control the local smoothing component of the $\X(I)$-norm.
For this purpose, we first note that $\mathcal{F}_{x_j} ((P_{N,\vece_j} g)(x'))$ is supported in an interval of length $4N$ for any $g \in L^2(\R^3)$, where $x' = (x_2, x_3)$, $x' = (x_1, x_3)$, respectively $x' = (x_1, x_2)$ if $j = 1, 2,3$. Fix $j \in \{1,2,3\}$. Using Minkowski's inequality and Bernstein's inequality in one dimension, we obtain
\begin{align*}
	&\Big(\sum_{N \in 2^\Z} (N^{\frac{3}{2}} \|P_N P_{N,\vece_j} \bdyop(v,u)\|_{L^{\infty,2}_{\vece_j}(I\times \bbr^3)})^2 \Big)^{\frac{1}{2}} \\
	&\lesssim \Big(\sum_{N \in 2^\Z}( N^{\frac{3}{2}} \|P_{N,\vece_j} \bdyop(P_N v,P_{\leq K^{-1}N} u)\|_{L^2_{t,x'} L^\infty_{x_j}})^2 \Big)^{\frac{1}{2}} \\
	&\lesssim \Big(\sum_{N \in 2^\Z} ( N^{2} \| \bdyop(P_N v,P_{\leq K^{-1}N} u)\|_{L^2(I\times \bbr^3)}\Big)^2 \Big)^{\frac{1}{2}} \\
	&\lesssim \Big\|\Big(\sum_{N \in 2^\Z} \Big(\sum_{N_1 \leq K^{-1}N} N^{2} \| \bdyop(P_N v,P_{N_1} u)\|_{L^2}\Big)^2 \Big)^{\frac{1}{2}}\Big\|_{L^2(I)}.
\end{align*}
Applying Lemma~\ref{lem:BilinearMultiplierEstimate} once more and arguing as in~\eqref{eq:EstBdyOpTimeL2L6},
we infer that the right-hand side is bounded by
\begin{align}
	 & \Big\|\Big(\sum_{N \in 2^\Z} \Big(\sum_{N_1 \leq K^{-1} N} \|P_N v\|_{L^2_x} \|P_{N_1} u\|_{L^\infty}\Big)^2 \Big)^{\frac{1}{2}}\Big\|_{L^2(I)} \nonumber\\
	&\lesssim \Big\|\Big(\sum_{N \in 2^\Z} \|P_N v\|_{L^2}^2
       \Big(\sum_{N_1 \leq K^{-1} N} N_1^{\frac{3}{4}} \|P_{N_1} u\|_{L^4}\Big)^2 \Big)^{\frac{1}{2}}\Big\|_{L^2(I)} \notag\\
     &\lesssim |I|^{\frac{1}{8}} \|v\|_{\Y(I)} \|u\|_{\X(I)}. \label{eq:EstBdyOpLocSmooth}
\end{align}
Combining~\eqref{eq:EstBdyOpTimeH1}, \eqref{eq:EstBdyOpTimeL2L6}, and~\eqref{eq:EstBdyOpLocSmooth}, we conclude~\eqref{eq:EstBdyTime}.
\end{proof}
 	
 	\subsection{Bilinear estimates}

We first derive the bilinear estimates for the Schr\"odinger operator.

 	\begin{lemma}
 		\label{lem:Quadratic}
   We have
 		\begin{align}
 			&\Big\|\int_{t_0}^\cdot e^{\imu(\cdot-s)\Delta} (v u)_R(s) \dd s \Big\|_{\X(I)}
               \lesssim  |I|^{\frac{1}{4}} K \log_2 K \|v\|_{\Y(I)} \|u\|_{\X(I)}, \label{eq:EstQuadraticR}\\
 			&\Big\|\int_{t_0}^\cdot e^{\imu(\cdot-s) \Delta} (b \cdot \nabla u)(s) \dd s \Big\|_{\X(I)}
              \lesssim \Big(|I|^{\frac{1}{4}} \|b\|_{L^\infty(I;H^1_x)}
               + \sum_{j = 1}^3 \|b\|_{L^{1,\infty}_{\vece_j}(I\times \bbr^3)} \Big)\|u\|_{\X(I)}, \label{eq:EstQuadraticbNabu}\\
 			&\Big\|\int_{t_0}^\cdot e^{\imu(\cdot-s) \Delta} (c u)(s) \dd s \Big\|_{\X(I)}
             \lesssim |I|^{\frac{1}{4}} \|c\|_{L^\infty(I; H^1_x)} \|u\|_{\X(I)}, \label{eq:EstQuadraticcu} \\
            & \Big\|\int_{t_0}^\cdot e^{\imu(\cdot-s) \Delta} (\mathcal{T}_{\cdot}(W_2)u)(s) \dd s \Big\|_{\X(I)}
            \lesssim |I|^{\frac{1}{4}} \|\mathcal{T}_{\cdot}(W_2) \|_{L^\infty(I; H^1_x)} \|u\|_{\X(I)}.   \label{eq:EstQuadraticNaTWu}
 		\end{align}
 	\end{lemma}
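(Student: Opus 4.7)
The plan is to apply Lemma~\ref{lem:InhomEstimate} to each of the four estimates, which reduces everything to bounding the corresponding right-hand side in the $\G(I)$-norm. For each term I will choose an appropriate decomposition $g = g_1 + g_2 + g_3$ in the definition of $\|\cdot\|_{\G_N}$: the scalar pieces $g_1 \in L^1_t L^2_x$ and $g_2 \in L^{8/5}_t L^{4/3}_x$ will suffice for the ``derivative-safe'' terms~\eqref{eq:EstQuadraticR}, \eqref{eq:EstQuadraticcu}, and~\eqref{eq:EstQuadraticNaTWu}, while the lateral piece $g_3 \in L^{1,2}_{\vece_j}$ is reserved for the high-frequency part of the first-order perturbation~\eqref{eq:EstQuadraticbNabu}, where it will absorb the derivative via the local smoothing effect. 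The $|I|^{1/4}$ prefactor will in each case come from H{\"o}lder in time, trading $L^\infty_t$ for $L^4_t$, combined with the Schr{\"o}dinger-admissible pair $(8/3,4)$ obtained by interpolating between the $L^\infty_t H^1_x$ and $L^2_t \dot B^1_{6,2}$ components of $\X(I)$.

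For~\eqref{eq:EstQuadraticR}, I place $(vu)_R$ into $g_2$ and use H{\"o}lder $\|P_N(vu)_R\|_{L^{4/3}_x}\lesssim \|v\|_{L^2_x}\|u\|_{L^4_x}$. For the $LH$ piece the output frequency is forced to be $\sim N$, so the Littlewood--Paley square sum in $N$ with weight $\langle N\rangle$ is immediate; the $1L$ piece lives at finitely many output frequencies $N\lesssim 1$. For the $HH$ piece, the constraint $N_1\sim N_2$ in~\eqref{eq:DefParaproduct} couples the two high input scales within a range of $\log_2 K$ dyadic values, while the redistribution of the output weight $\langle N\rangle$ (which can be as small as $1$ while the two inputs reach scales up to $KN$) contributes the additional factor $K$, producing the claimed $|I|^{1/4}K\log_2 K$. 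The estimates~\eqref{eq:EstQuadraticcu} and~\eqref{eq:EstQuadraticNaTWu} follow the same strategy: a paraproduct split together with Sobolev's embedding $H^1\hookrightarrow L^4\cap L^6$ on $\R^3$ gives $\|\langle\nabla\rangle(cu)\|_{L^{4/3}_x}\lesssim \|c\|_{H^1_x}\|u\|_{H^1_x}+\|c\|_{L^4_x}\|\nabla u\|_{L^2_x}$, and similarly for $\cT_\cdot(W_2)u$; placing the result into $g_2$ and applying H{\"o}lder in time yields the bound.

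The principal obstacle is~\eqref{eq:EstQuadraticbNabu}: a naive H{\"o}lder estimate with $b\in L^\infty H^1$ and $\nabla u\in L^\infty L^2$ would require controlling $u$ in $H^2$, which is unavailable. I resolve this with a paraproduct split $b\cdot\nabla u = (b\cdot\nabla u)_{HL+HH} + (b\cdot\nabla u)_{LH}$. In the first piece $b$ sits at frequency $\gtrsim N$, so the derivative may be moved onto $b$; placing the contribution into $g_1$ and estimating by H{\"o}lder with Sobolev on $b\in H^1$ yields the summand $|I|^{1/4}\|b\|_{L^\infty H^1}\|u\|_\X$. For $(b\cdot\nabla u)_{LH}$ with $b$ at frequency $\ll N$ and $\nabla u$ carrying the output frequency $N$, I put the contribution into $g_3$. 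The decomposition~\eqref{eq:dec-pe} writes $P_N = \sum_j P_{N,\vece_j}\bigl[\prod_{l<j}(1-P_{N,\vece_l})\bigr]P_N$, after which a lateral H{\"o}lder inequality $L^{1,\infty}_{\vece_j}\cdot L^{\infty,2}_{\vece_j}\hookrightarrow L^{1,2}_{\vece_j}$ reduces the bound to $\sum_{N_1\ll N}\|P_{N_1}b\|_{L^{1,\infty}_{\vece_j}}\cdot\|P_{N,\vece_j}\nabla u\|_{L^{\infty,2}_{\vece_j}}$. The local smoothing component of $\X_N$ supplies $\|P_{N,\vece_j}\nabla u\|_{L^{\infty,2}_{\vece_j}}\lesssim N\cdot N^{-3/2}\|u\|_{\X_N}=N^{-1/2}\|u\|_{\X_N}$, which precisely cancels the $N^{1/2}$ weight attached to $g_3$ in $\|\cdot\|_{\G_N}$; summing the low-frequency indices $N_1\ll N$ of $\|P_{N_1}b\|_{L^{1,\infty}_{\vece_j}}$ yields $\|b\|_{L^{1,\infty}_{\vece_j}}$, producing the second summand in~\eqref{eq:EstQuadraticbNabu}.

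The hard part is thus the absorption of the derivative on $u$ in the first-order perturbation $b\cdot\nabla u$, which is invisible in the deterministic theory but unavoidable after the rescaling transformations have converted the multiplicative noise into a random lower order term. The remedy --- pairing the local smoothing component of $\X_N$ with the lateral spatial norm $L^{1,\infty}_{\vece_j}$ of the coefficient, so that the weights match dimensionally --- is an adaptation of the framework developed in~\cite{BIKT11} for Schr{\"o}dinger maps, and it is exactly what allows the energy-space fixed-point argument to proceed in the presence of the noise-induced first-order perturbation.
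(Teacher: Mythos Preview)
Your approach is essentially the same as the paper's: reduce via Lemma~\ref{lem:InhomEstimate} to the $\G(I)$-norm, place $(vu)_R$, $cu$, and $\cT_\cdot(W_2)u$ into the $L^{8/5}_t L^{4/3}_x$ slot, and split $b\cdot\nabla u$ paraproduct-wise, routing the $LH$ piece through the lateral $L^{1,2}_{\vece_j}$ slot so that the $N^{1/2}$ weight cancels against the local smoothing gain $N^{-3/2}$.

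There is one slip. For the $HL+HH$ piece of $b\cdot\nabla u$ you propose to use the $g_1$ slot, i.e.\ $\langle N\rangle\|\,\cdot\,\|_{L^1_t L^2_x}$. This does not close with only $b\in L^\infty_t H^1_x$: any H{\"o}lder split $\|P_N b\|_{L^p_x}\|P_{\leq K^{-1}N}\nabla u\|_{L^q_x}$ with $\tfrac1p+\tfrac1q=\tfrac12$ either forces $\nabla u\in L^\infty_x$ (unavailable at $H^1$ regularity) or, after Bernstein, requires $b\in H^{3/2}$. The paper instead places $HL+HH$ into the $g_2$ slot $\langle N\rangle\|\,\cdot\,\|_{L^{8/5}_t L^{4/3}_x}$, where H{\"o}lder gives $\|P_N b\|_{L^2_x}\|\nabla u\|_{L^4_x}$; the weight $\langle N\rangle$ then lands squarely on $b\in H^1$, and $\|\nabla u\|_{L^{8/3}_t L^4_x}\lesssim\|u\|_{\X(I)}$ by the interpolation you already identified, yielding the stated $|I|^{1/4}\|b\|_{L^\infty H^1}\|u\|_{\X}$. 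With this correction your plan matches the paper's proof.
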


\begin{proof}
 		We start by proving~\eqref{eq:EstQuadraticR}. By~\eqref{eq:EstDuhamel} and the definition of $\|\cdot\|_{\G(I)}$ we obtain
 		\begin{align*}
 			\Big\|\int_{t_0}^\cdot e^{\imu(\cdot-s)\Delta} (v u)_R(s) \dd s \Big\|_{\X(I)}
            \lesssim \|\|\langle \nabla \rangle P_N(vu)_R\|_{L^\frac 85(I;L^\frac 43_x)}\|_{l^2_N}
            \lesssim \|\langle \nabla \rangle (vu)_{R}\|_{L^{\frac{8}{5}}(I;\dot{B}^0_{\frac{4}{3},2})},
 		\end{align*}
 		where we applied Minkowski's inequality in the last step.

In order to estimate the right-hand side above,
we start with the $LH$-component of $(vu)_R$. Employing H{\"o}lder's inequality, we derive
 		\begin{align} \label{vu-LH-esti}
 			\|\langle \nabla \rangle (vu)_{LH}\|_{L^{\frac{8}{5}}(I;\dot{B}^0_{\frac{4}{3},2})}
 			&\lesssim \Big\|\Big(\sum_{N \in 2^{\Z}} \langle N \rangle^2
                \|P_{\leq K^{-1} N} v P_N u\|_{L^{\frac{4}{3}}_x}^2 \Big)^{\frac{1}{2}} \Big\|_{L^{\frac{8}{5}}(I)} \notag \\
 			&\lesssim \Big\|\Big(\sum_{N \in 2^{\Z}} \langle N \rangle^2 \|P_{\leq K^{-1} N} v\|_{L^2_x}^2
            \|P_N u\|_{L^{4}_x}^2 \Big)^{\frac{1}{2}} \Big\|_{L^{\frac{8}{5}}(I)}  \notag  \\
            &\lesssim |I|^{\frac{1}{4}} \|v\|_{\Y(I)} \|u\|_{\X(I)}.
 		\end{align}
Similarly, we obtain for the $HH$- and $1L$-components
 		\begin{align} \label{vu-HH-esti}
 			\|\langle \nabla \rangle (vu)_{HH}\|_{L^{\frac{8}{5}}(I;\dot{B}^0_{\frac{4}{3},2})}
 			&\lesssim \|\langle \nabla \rangle (vu)_{HH}\|_{L^{\frac{8}{5}}(I;L^{\frac{4}{3}}_x)} \notag \\
 			&\lesssim \Big\|\sum_{N \in 2^{\Z}} \sum_{\substack{N_1 \in 2^{\Z} \\
           K^{-1}N < N_1 < K N}} \langle \nabla \rangle (P_{N_1} v P_N u) \Big\|_{L^{\frac{8}{5}}(I; L^{\frac{4}{3}}_x)} \nonumber\\
 			&\lesssim \Big\|\sum_{\substack{M \in 2^{\Z} \\
              K^{-1} < M < K }} \sum_{N \in 2^{\Z}}  \langle M N + N \rangle \|P_{MN} v\|_{L^2_x} \| P_N u\|_{L^4_x} \Big\|_{L^{\frac{8}{5}}(I)}  \notag \\
             &\lesssim |I|^{\frac{1}{4}} K \log_2 K \|v\|_{\Y(I)} \|u\|_{\X(I)},
        \end{align}
and
        \begin{align} \label{vu-1L-esti}
 			\|\langle \nabla \rangle (vu)_{1L}\|_{L^{\frac{8}{5}}(I;\dot{B}^0_{\frac{4}{3},2})}
 			&\lesssim \Big\| \sum_{\substack{2^{-1} \leq N \leq 2  \\ N \in 2^{\Z}}} \langle
            \nabla \rangle (P_{N} v P_{\leq K^{-1}N} u) \Big\|_{L^{\frac{8}{5}}(I;L^{\frac{4}{3}}_x)} \notag \\
            &\lesssim |I|^{\frac{1}{4}} \|v\|_{\Y(I)} \|u\|_{L^{\frac{8}{3}}(I; L^{4}_x)} \nonumber\\
 			&\lesssim |I|^{\frac{1}{4}} \|v\|_{\Y(I)} \|u\|_{\X(I)}.
 		\end{align}
Estimate \eqref{eq:EstQuadraticR} then follows immediately from \eqref{vu-LH-esti}, \eqref{vu-HH-esti}, and \eqref{vu-1L-esti}.
 		
		We next prove~\eqref{eq:EstQuadraticbNabu}. To that purpose, we decompose
		\begin{align*}
			b \cdot \nabla u = (b \cdot \nabla u)_{HL} + (b \cdot \nabla u)_{HH} + (b \cdot \nabla u)_{LH}
		\end{align*}
		with $K = 2^5$. 		
		Estimate~\eqref{eq:EstDuhamel} and the definition of $\|\cdot\|_{\G(I)}$, again combined with Minkowski's inequality, yield
		\begin{align} \label{eq:EstQuadraticbNablauDuhamel}
			 \Big\|\int_0^t e^{\imu(t-s) \Delta} (b \cdot \nabla u)(s) \dd s \Big\|_{\X(I)}
			&\lesssim \| \langle \nabla \rangle (b \cdot \nabla u)_{HL + HH}\|_{L^{\frac{8}{5}}(I;\dot{B}^0_{\frac{4}{3},2})}   \nonumber\\
			&\qquad + \sum_{j = 1}^3 \Big(\sum_{N \in 2^\Z} N \|P_N (b \cdot \nabla u)_{LH}\|_{L^{1,2}_{\vece_j}(I\times \bbr^d}^2 \Big)^{\frac{1}{2}}.
		\end{align}
		We start with the first summand on the right-hand side. We infer
		\begin{align}
			\| \langle \nabla \rangle (b \cdot \nabla u)_{HL}\|_{L^{\frac{8}{5}}(I;\dot{B}^0_{\frac{4}{3},2})}
            &\lesssim \Big\| \Big(\sum_{N \in 2^\Z} \langle N \rangle^2 \|P_N b \|_{L^2_x}^2 \|P_{\leq K^{-1}N} (\nabla u)\|_{L^{4}_x}^2 \Big)^{\frac{1}{2}} \Big\|_{L^{\frac{8}{5}}(I)}  \nonumber \\
			&\lesssim \|b\|_{L^\infty(I;H^1_x)} \|\nabla u\|_{L^{\frac{8}{5}}(I;L^4_x)} \nonumber\\
			&\lesssim |I|^{\frac{1}{4}} \|b\|_{L^\infty(I;H^1_x)} \|\langle \nabla \rangle u\|_{L^{\frac{8}{3}}(I;\dot{B}^0_{4,2})}   \nonumber \\
            &\lesssim |I|^{\frac{1}{4}} \|b\|_{L^\infty(I;H^1_x)} \|u\|_{\X(I)}. \label{eq:EstQuadraticbNablauHL}
		\end{align}
		With the same adaptions as in~\eqref{vu-HH-esti}, we also obtain
		\begin{align}
			\| \langle \nabla \rangle (b \cdot \nabla u)_{HH}\|_{L^{\frac{8}{5}}(I;\dot{B}^0_{\frac{4}{3},2})}
           \lesssim |I|^{\frac{1}{4}} \|b\|_{L^\infty(I;H^1_x)} \|u\|_{\X(I)}. \label{eq:EstQuadraticbNablauHH}
		\end{align}
		It remains to estimate the sum on the right-hand side of~\eqref{eq:EstQuadraticbNablauDuhamel}. Fix $j \in \{1,2,3\}$. Using the decomposition~\eqref{eq:dec-pe}, we then derive
		\begin{align}
			 &\Big(\sum_{N \in 2^\Z} N \|P_N (b \cdot \nabla u)_{LH}\|_{L^{1,2}_{\vece_j}(I\times \bbr^3)}^2 \Big)^{\frac{1}{2}}
             \lesssim \Big(\sum_{N \in 2^\Z} N \|P_{\leq K^{-1} N} b P_N(\nabla u)\|_{L^{1,2}_{\vece_j}(I\times \bbr^3)}^2 \Big)^{\frac{1}{2}} \nonumber\\
			&\lesssim \Big(\sum_{N \in 2^\Z} N \| |P_{\leq K^{-1} N} b|^{\frac{1}{2}} \|_{L^{2,\infty}_{\vece_j}(I\times \bbr^3)}^2
               \Big\||P_{\leq K^{-1} N} b|^{\frac{1}{2}} \Big|\sum_{m=1}^{3} P_{N,\vece_m} \Big[\prod_{l=1}^{m-1} (1-P_{N,\vece_l})\Big]P_N(\nabla u)\Big|\Big\|_{L^{2}_{t,x}(I\times \bbr^3)}^2 \Big)^{\frac{1}{2}}   \nonumber\\
			&\lesssim \|b\|_{L^{1,\infty}_{\vece_j}(I\times \bbr^3)}^{\frac{1}{2}}
             \sum_{m = 1}^3 \Big(\sum_{N \in 2^\Z} N  \Big\||P_{\leq K^{-1} N} b|^{\frac{1}{2}} \Big|P_{N,\vece_m} \Big[\prod_{l=1}^{m-1} (1-P_{N,\vece_l})\Big]P_N(\nabla u)\Big|\Big\|_{L^{2}_{t,x}(I\times \bbr^3)}^2 \Big)^{\frac{1}{2}} \nonumber\\
             &\lesssim \|b\|_{L^{1,\infty}_{\vece_j}(I\times \bbr^3)}^{\frac{1}{2}}
             \sum_{m = 1}^3 \Big(\sum_{N \in 2^\Z} N \| |P_{\leq K^{-1} N} b|^{\frac{1}{2}} \|_{L^{2,\infty}_{\vece_m}}^2 \Big\| P_{N,\vece_m} \Big[\prod_{l=1}^{m-1} (1-P_{N,\vece_l})\Big]P_N(\nabla u)\Big\|_{L^{\infty,2}_{\vece_m}(I\times \bbr^3)}^2 \Big)^{\frac{1}{2}} \nonumber\\
             &\lesssim \|b\|_{L^{1,\infty}_{\vece_j}(I\times \bbr^3)}^{\frac{1}{2}}
             \sum_{m = 1}^3 \|b\|_{L^{1,\infty}_{\vece_m}(I\times \bbr^3)}^{\frac{1}{2}}\Big(\sum_{N \in 2^\Z} N^3  \| P_{N,\vece_m}P_N u \|_{L^{\infty,2}_{\vece_m}(I\times \bbr^3)}^2 \Big)^{\frac{1}{2}} \lesssim \Big( \sum_{j = 1}^3 \|b\|_{L^{1,\infty}_{\vece_j}(I\times \bbr^3)} \Big) \|u\|_{\X(I)}. \label{eq:EstQuadraticbNablauLH}
		\end{align}
		
 		Inserting~\eqref{eq:EstQuadraticbNablauHL}, \eqref{eq:EstQuadraticbNablauHH}, and~\eqref{eq:EstQuadraticbNablauLH} into~\eqref{eq:EstQuadraticbNablauDuhamel}, we conclude~\eqref{eq:EstQuadraticbNabu}. 		
 		
 		Finally, to get~\eqref{eq:EstQuadraticcu} we estimate as above
 		\begin{align*}
 			\Big\|\int_{t_0}^\cdot e^{\imu(\cdot-s) \Delta} (c u)(s) \dd s \Big\|_{\X(I)}
        &\lesssim \| \langle \nabla \rangle (cu) \|_{L^\frac{8}{5}(I;\dot{B}^0_{\frac 43,2})}
        \lesssim |I|^{\frac{1}{4}} \|\langle \nabla \rangle c\|_{L^\infty(I; L^2_x)} \| \langle \nabla \rangle u \|_{L^{\frac{8}{3}}(I;L^4_x)} \\
 		&\lesssim |I|^{\frac{1}{4}} \|c\|_{L^\infty(I; H^1_x)} \| u \|_{\X(I)}.
 		\end{align*}
Estimate \eqref{eq:EstQuadraticNaTWu} is shown in the same way as~\eqref{eq:EstQuadraticcu} by replacing $c$ with $\mathcal{T}_{\cdot}(W_2)u$.
Therefore, the proof is complete.
\end{proof}
 	
Regarding the wave operator,
we have the following bilinear estimate.
\begin{lemma}  \label{lem:QuadraticWave}
We have
 		\begin{align}
 			&\Big\|\int_{t_0}^\cdot e^{\imu(\cdot-s)|\nabla|} |\nabla| (u_1 u_2)(s) \dd s \Big\|_{\Y(I)}
             \lesssim |I|^{\frac 14} \|u_1\|_{\X(I)} \|u_2\|_{\X(I)}.     \label{eq:EstQuadraticWave}
 		\end{align}
 	\end{lemma}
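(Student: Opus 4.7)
The plan is to first reduce the claim to a bound on $\||\nabla|(u_1 u_2)\|_{L^1(I; L^2_x)}$ by exploiting that $\{e^{\imu t|\nabla|}\}_{t\in\R}$ is a group of isometries on $L^2_x$. Minkowski's inequality applied to the Duhamel integral gives
\begin{align*}
\Big\|\int_{t_0}^\cdot e^{\imu(\cdot - s)|\nabla|} |\nabla|(u_1 u_2)(s) \dd s\Big\|_{\Y(I)}
\leq \int_I \||\nabla|(u_1 u_2)(s)\|_{L^2_x} \dd s,
\end{align*}
so it suffices to control the right-hand side by $|I|^{1/4}\|u_1\|_{\X(I)}\|u_2\|_{\X(I)}$.

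Next, I would use the identity $\||\nabla|f\|_{L^2_x} = \|\nabla f\|_{L^2_x}$ (by Plancherel) together with the Leibniz rule and Hölder $L^4_x \cdot L^4_x \hookrightarrow L^2_x$ to obtain
\begin{align*}
\||\nabla|(u_1 u_2)(s)\|_{L^2_x} \lesssim \|\nabla u_1(s)\|_{L^4_x}\|u_2(s)\|_{L^4_x} + \|u_1(s)\|_{L^4_x}\|\nabla u_2(s)\|_{L^4_x}.
\end{align*}
The task is then to bound the spatial $L^4_x$ factors in an appropriate time norm. From the definition of $\|\cdot\|_{\X(I)}$ one has $\|\langle\nabla\rangle u\|_{L^\infty(I; L^2_x)} \lesssim \|u\|_{\X(I)}$, and, via Minkowski $\ell^2 \hookrightarrow L^6$ combined with the Littlewood--Paley square-function inequality in $L^6$, also $\|\langle\nabla\rangle u\|_{L^2(I; L^6_x)} \lesssim \|u\|_{\X(I)}$. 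The pointwise interpolation $\|f\|_{L^4_x} \leq \|f\|_{L^2_x}^{1/4}\|f\|_{L^6_x}^{3/4}$ (valid in $\R^3$) applied inside the time integral then yields
\begin{align*}
\|\langle\nabla\rangle u\|_{L^{8/3}(I; L^4_x)} \leq \|\langle\nabla\rangle u\|_{L^\infty(I; L^2_x)}^{1/4}\|\langle\nabla\rangle u\|_{L^2(I; L^6_x)}^{3/4} \lesssim \|u\|_{\X(I)}.
\end{align*}

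The final step is H\"older in time with exponents $(4, 8/3, 8/3)$, whose reciprocals sum to $1$, extracting exactly the factor $|I|^{1/4}$:
\begin{align*}
\int_I \|\nabla u_1\|_{L^4_x}\|u_2\|_{L^4_x} \dd s
\leq |I|^{1/4}\|\nabla u_1\|_{L^{8/3}(I; L^4_x)}\|u_2\|_{L^{8/3}(I; L^4_x)}
\lesssim |I|^{1/4}\|u_1\|_{\X(I)}\|u_2\|_{\X(I)},
\end{align*}
with the symmetric term $\|u_1\|_{L^4_x}\|\nabla u_2\|_{L^4_x}$ handled identically. There is no serious obstacle in this argument; the only technical point is the Littlewood--Paley equivalence $\|\langle\nabla\rangle u\|_{L^2(I; L^6_x)} \lesssim \|u\|_{\X(I)}$, which rests on the boundedness of the square function in $L^6$ and on Minkowski (using $6 \geq 2$ to push the $\ell^2$-sum in $N$ inside $L^6$).
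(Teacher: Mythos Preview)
Your proof is correct and takes essentially the same approach as the paper: reduce to $\||\nabla|(u_1 u_2)\|_{L^1(I;L^2_x)}$ via unitarity of the wave group, distribute the derivative, and interpolate between the $L^\infty H^1$ and $L^2 L^6$ components of $\X(I)$. The paper uses the asymmetric H\"older splitting $L^2_t L^6_x \times L^4_t L^3_x$ (in Besov form $\dot B^0_{6,2}\times\dot B^0_{3,2}$) instead of your symmetric $L^{8/3}_t L^4_x \times L^{8/3}_t L^4_x$, but this is cosmetic.
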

 	
 	\begin{proof}
 		The energy-estimate, dyadic decomposition and interpolation yield
 		\begin{align*}
 			\Big\|\int_{t_0}^\cdot e^{\imu(\cdot-s)|\nabla|} |\nabla| (u_1 u_2)(s) \dd s \Big\|_{\Y(I)}
            &\lesssim \|  |\nabla| (u_1 u_2)\|_{L^1(I;L^2_x)}  \\
 			 &\lesssim  |I|^{\frac 14} \|\langle \nabla \rangle u_1\|_{L^2(I;\dot{B}^0_{6,2})}
              \|\langle \nabla \rangle u_2\|_{L^4(I;\dot{B}^0_{3,2})} \\
 			&\lesssim   |I|^{\frac 14}  \|u_1\|_{\X(I)} \|u_2\|_{\X(I)}.
 		\end{align*}
 	\end{proof}
 	
 	\subsection{Trilinear estimates}
 	
\begin{lemma} [Trilinear estimates]  \label{lem:Cubic}
We have
 \begin{align}
 		 &\Big\|\int_{t_0}^\cdot e^{\imu (\cdot-s)\Delta}
 \bdyop( |\nabla| (u_1 u_2), u_3)(s) \dd s \Big\|_{\X(I)}
       \lesssim |I|^\frac 14 \|u_1\|_{\X(I)} \|u_2\|_{\X(I)} \|u_3\|_{\X(I)}, \label{eq:EstCubicuuu}\\
         &\Big\|\int_{t_0}^\cdot e^{\imu(\cdot-s)\Delta} \bdyop(v_1, v_2 u)(s) \dd s \Big\|_{\X(I)}
          \lesssim |I|^{\frac{1}{8}} \|v_1\|_{\Y(I)} \|v_2\|_{\Y(I)} \|u\|_{\X(I)}, \label{eq:EstCubicvvu}\\
         &\Big\|\int_{t_0}^\cdot e^{\imu(\cdot-s)\Delta} \bdyop(v, b \cdot \nabla u + cu - \mathcal{T}_{\cdot}(W_2)u)(s) \dd s \Big\|_{\X(I)}
         \lesssim |I|^{\frac{1}{8}} \bigg(\|b\|_{L^\infty(I;H^1_x)} + \|c\|_{\Y(I)} \notag \\
             &\qquad \qquad \qquad \qquad \qquad \qquad \qquad \qquad \qquad \qquad \qquad \qquad \
             \ \ + \|\mathcal{T}_{\cdot}(W_2)\|_{\Y(I)} \bigg)\|v\|_{\Y(I)} \|u\|_{\X(I)}.    \label{eq:EstCubicLowerOrderTerms}
 \end{align}
\end{lemma}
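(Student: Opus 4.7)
The plan is to handle all three estimates by a common template: first apply Lemma~\ref{lem:InhomEstimate} to reduce each bound to an estimate of the $\G(I)$-norm of the integrand, then exploit the smoothing structure of $\bdyop$. By~\eqref{Omegab-def} together with the pointwise lower bound $|\resf(\xi-\eta,\eta)| \gtrsim |\xi - \eta|\langle \xi - \eta\rangle$ on the support of $\cP_{XL}$, the multiplier of $\bdyop$ is controlled by $|\nabla|^{-1}\langle\nabla\rangle^{-1}$ times a Coifman--Meyer symbol. Combined with Lemma~\ref{lem:BilinearMultiplierEstimate}, this yields the pointwise-in-time estimate
\begin{equation*}
\|\bdyop(P_N f, P_{N_1} g)\|_{L^r_x} \lesssim N^{-1}\langle N\rangle^{-1}\|P_N f\|_{L^p_x}\|P_{N_1} g\|_{L^q_x}
\end{equation*}
for $N_1 \leq K^{-1}N$ and H\"older exponents $1/p+1/q=1/r$, with the output essentially localized at frequency $\sim N$. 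This serves as input for a dyadic summation over the output frequency, combined with H\"older, Sobolev, Bernstein, and Young's inequality for convolution of dyadic sequences, exactly in the spirit of the proofs of Lemma~\ref{lem:Bdy} and Lemma~\ref{lem:Quadratic}.

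For~\eqref{eq:EstCubicuuu}, the $|\nabla|$ inside the first argument of $\bdyop$ cancels the $|\nabla|^{-1}$ smoothing, leaving only a net $\langle\nabla\rangle^{-1}$ gain. I would place the output in the $L^{8/5}(I;L^{4/3}_x)$-component of $\G_N$, split $L^{4/3}_x$ as the H\"older product $L^2_x\cdot L^4_x$, and estimate $P_N(u_1 u_2)$ in $L^\infty_t L^2_x$ using the Sobolev embedding $H^1_x \hookrightarrow L^4_x$ on the two factors $u_1,u_2$, while $P_{\leq K^{-1}N}u_3$ goes into the admissible Strichartz norm $L^{8/3}(I;L^4_x)$, the $|I|^{1/4}$-gain coming from the temporal embedding $L^{8/3}(I)\supset L^{8/5}(I)$. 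For~\eqref{eq:EstCubicvvu}, the full smoothing $|\nabla|^{-1}\langle\nabla\rangle^{-1}$ is available; I would proceed analogously, placing $P_N v_1$ in $\Y=L^\infty(I;L^2_x)$ and writing $\|v_2 u\|_{L^p_t L^q_x}\lesssim \|v_2\|_\Y\|u\|_{L^p_t L^r_x}$ with $1/q=1/2+1/r$ for the low-frequency factor, collecting $|I|^{1/8}$ from temporal H\"older. For~\eqref{eq:EstCubicLowerOrderTerms}, the terms $cu$ and $\mathcal{T}_{\cdot}(W_2)u$ are handled exactly as $v_2 u$ in~\eqref{eq:EstCubicvvu}, since $c$ and $\mathcal{T}_{\cdot}(W_2)$ both lie in $\Y$.

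The main obstacle is the first-order perturbation $b\cdot\nabla u$ in~\eqref{eq:EstCubicLowerOrderTerms}, where $\nabla u$ only belongs to $L^2_x$. In Lemma~\ref{lem:Quadratic} the analogous difficulty for~\eqref{eq:EstQuadraticbNabu} was resolved via the local smoothing component of $\G$; in the cubic setting, however, the extra $\langle\nabla\rangle^{-1}$-smoothing supplied by $\bdyop$ makes it possible to avoid the local smoothing norm entirely. The key bound is
\begin{equation*}
\|b\cdot\nabla u\|_{L^2(I;L^2_x)} \lesssim \|b\|_{L^\infty(I;H^1_x)}\|\nabla u\|_{L^2(I;L^6_x)} \lesssim \|b\|_{L^\infty(I;H^1_x)}\|u\|_{\X(I)},
\end{equation*}
obtained via H\"older with the Sobolev embedding $H^1_x\hookrightarrow L^3_x$ and the admissible Strichartz pair $(2,6)$. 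The low-frequency factor $P_{\leq K^{-1}N}(b\cdot\nabla u)$ is then transferred to $L^4_x$ via Bernstein at the cost of a harmless $(K^{-1}N)^{3/4}$ factor, after which the estimate closes as in~\eqref{eq:EstCubicvvu}. The most delicate piece of the bookkeeping is to ensure the dyadic $\ell^2$-summation converges both at small output frequencies $N$, where only $|\nabla|^{-1}$-smoothing is available and part of the output must be placed in the $L^1(I;L^2_x)$-component of $\G_N$ with Bernstein used to gain a positive power of $N$, and at large $N$, where the full smoothing together with the temporal H\"older inclusion closes the sum directly. As in the final step of the proof of~\eqref{eq:EstBdyInitial}, Young's inequality for convolution of dyadic sequences is used to collect the bound.
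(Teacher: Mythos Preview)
Your overall strategy matches the paper's: reduce via Lemma~\ref{lem:InhomEstimate} to the $\G(I)$-norm and exploit that $\bdyop$ behaves like $|\nabla|^{-1}\langle\nabla\rangle^{-1}$ through Lemma~\ref{lem:BilinearMultiplierEstimate}. For~\eqref{eq:EstCubicuuu} the paper places the output in $L^1(I;H^1_x)$ rather than your $L^{8/5}(I;L^{4/3}_x)$, but either choice works.

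The gap is in your treatment of $b\cdot\nabla u$ in~\eqref{eq:EstCubicLowerOrderTerms}. Your H\"older split $\|b\cdot\nabla u\|_{L^2(I;L^2_x)}\lesssim\|b\|_{L^\infty H^1}\|\nabla u\|_{L^2 L^6}$ followed by Bernstein from $L^2_x$ to $L^4_x$ gains only $N_1^{3/4}$ on the low-frequency factor, which does not cancel the $N^{-1}$ coming from $\bdyop$. This is precisely what forces your frequency split, and routing the low-frequency piece through the $L^1(I;L^2_x)$ component of $\G_N$ then produces a factor $|I|^{1/2}$ (from $\|b\cdot\nabla u\|_{L^1_t L^2_x}\le |I|^{1/2}\|b\cdot\nabla u\|_{L^2_t L^2_x}$), not the claimed $|I|^{1/8}$. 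The paper avoids any split by choosing exponents so that Bernstein yields exactly $N_1^{1}$: after reducing, as in the proof of~\eqref{eq:EstCubicvvu}, to $\|b\cdot\nabla u\|_{L^{8/5}(I;L^{12/7}_x)}$, it uses
\[
\|b\cdot\nabla u\|_{L^{8/5}(I;L^{12/7}_x)}\lesssim\|b\|_{L^8(I;L^{12/5}_x)}\|\langle\nabla\rangle u\|_{L^2(I;L^6_x)}\lesssim|I|^{1/8}\|b\|_{L^\infty(I;H^1_x)}\|u\|_{\X(I)},
\]
via $H^1\hookrightarrow L^{12/5}$. The Bernstein step $\|P_{N_1}(\cdot)\|_{L^4_x}\lesssim N_1\|P_{N_1}(\cdot)\|_{L^{12/7}_x}$ then gives $N^{-1}N_1\le K^{-1}$ uniformly in $N$, and Young's inequality closes the $\ell^2$-sum over \emph{all} dyadic $N$ at once. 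The same $L^{12/7}$ reduction is what drives the paper's proof of~\eqref{eq:EstCubicvvu}, so no low/high dichotomy is needed anywhere.
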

 	
 	\begin{proof}
 		We start by proving~\eqref{eq:EstCubicuuu}. Using estimate~\eqref{eq:EstDuhamel} and Lemma~\ref{lem:BilinearMultiplierEstimate}, we derive
 		\begin{align*}
 			 \Big\|\int_{t_0}^\cdot e^{\imu (\cdot-s)\Delta} \bdyop(|\nabla| (u_1 u_2), u_3)(s) \dd s \Big\|_{\X(I)}
        &\lesssim \| \bdyop( |\nabla| (u_1 u_2), u_3)\|_{L^1(I;H^1_x)} \\
 		&\lesssim \Big\| \Big(\sum_{N \in 2^\Z}
        \Big( \sum_{N_1 \leq K^{-1} N}\| P_N (u_1 u_2)\|_{L^2_x} \| P_{N_1} u_3 \|_{L^\infty_x}\Big)^2 \Big)^{\frac{1}{2}} \Big\|_{L^1(I)}   \\
 		&\lesssim |I|^\frac 14 \| \|u_1 u_2\|_{L^2_x} \|\langle \nabla \rangle u_3\|_{L^4_x} \|_{L^\frac 43(I)}  \\
        &\lesssim |I|^\frac 14 \|u_1\|_{\Y(I)} \|\langle \nabla \rangle u_2\|_{L^\frac{8}{3}(I;L^4_x)}
          \|\langle \nabla \rangle u_3\|_{L^\frac{8}{3}(I; L^4_x)} \\
        &\lesssim |I|^\frac 14 \|u_1\|_{\X(I)} \|u_2\|_{\X(I)} \|u_3\|_{\X(I)},
 		\end{align*}
 		where we once more used that, via Bernstein's inequality,
 		\begin{align*}
 			\sum_{N_1 \leq K^{-1} N} \| P_{N_1} u \|_{L^\infty_x} \lesssim \sum_{N_1 \in 2^{\Z}} N_1^{\frac{3}{4}} \| P_{N_1} u \|_{L^4_x} \lesssim \|\langle \nabla \rangle u\|_{L^4_x}.
 		\end{align*}

 		We proceed in the same way to show~\eqref{eq:EstCubicvvu}. Estimate~\eqref{eq:EstDuhamel} and Lemma~\ref{lem:BilinearMultiplierEstimate} imply
 		\begin{align*}
 			 \Big\|\int_{t_0}^\cdot e^{\imu(\cdot-s)\Delta} \bdyop(v_1, v_2 u)(s) \dd s \Big\|_{\X(I)}
         &\lesssim \| \langle \nabla \rangle \bdyop(v_1,v_2u)\|_{L^{\frac{8}{5}}(I; \dot{B}^{0}_{\frac{4}{3},2})} \\
 			&\lesssim \Big\|\Big(\sum_{N \in 2^\Z}
        \Big(\sum_{N_1 \leq K^{-1} N} N^{-1} \|P_N v_1 \|_{L^2_x} \|P_{N_1}(v_2 u)\|_{L^{4}_x} \Big)^2 \Big)^{\frac{1}{2}} \Big\|_{L^{\frac{8}{5}}(I)} \\
 			 &\lesssim \|v_1 \|_{\Y(I)}\Big\|
         \Big(\sum_{N \in 2^\Z} \Big(\sum_{N_1 \leq K^{-1} N} N^{-1} N_1  \|P_{N_1}(v_2u)\|_{L^{\frac{12}{7}}_x} \Big)^2 \Big)^{\frac{1}{2}} \Big\|_{L^{\frac{8}{5}}(I)}.
 		\end{align*}
 		By Young's inequality for series and the embedding $L^{\frac{12}{7}} \hookrightarrow \dot{B}^0_{\frac{12}{7},2}$ we thus conclude
 		\begin{align}
 			\Big\|\int_{t_0}^\cdot e^{\imu(\cdot-s)\Delta} \bdyop(v_1, v_2 u)(s) \dd s \Big\|_{\X(I)}
 			\lesssim& \|v_1\|_{\Y(I)} \|v_2 u\|_{L^{\frac{8}{5}}(I; L^{\frac{12}{7}}_x)} \notag \\
            \lesssim& \|v_1\|_{\Y(I)} \|v_2\|_{L^\infty(I; L^2_x)} \|u\|_{L^{\frac{8}{5}}(I; L^{12}_x)}  \nonumber\\
 			  \lesssim& |I|^{\frac{1}{8}} \|v_1\|_{\Y(I)} \|v_2\|_{\Y(I)} \|u\|_{\X(I)}. \label{eq:EstDuhamelBdyvvu}
 		\end{align}

 		Replacing $v_2$ in the second component of $\bdyop$ by $c-\mathcal{T}_{\cdot}(W_2)$, we obtain from~\eqref{eq:EstDuhamelBdyvvu}
 		\begin{align}
 			\label{eq:EstDuhamelBdyvcu}
 			&\Big\|\int_{t_0}^\cdot e^{\imu(\cdot-s)\Delta} \bdyop(v, (c-\cT_\cdot(W_2)) u)(s) \dd s \Big\|_{\X(I)}
 			\lesssim  |I|^{\frac{1}{8}} \|v\|_{\Y(I)}
                     (\|c\|_{\Y(I)} + \|\mathcal{T}_{\cdot}(W_2)\|_{\Y(I)})  \|u\|_{\X(I)}.
 		\end{align}
 		Similarly, replacing $v_2 u$ by $b \cdot \nabla u$,
        using Sobolev's embedding $H^1 \hookrightarrow L^{\frac{12}{5}}$
        and arguing as in the proof of~\eqref{eq:EstDuhamelBdyvvu}, we obtain
 		\begin{align}
 			\Big\|\int_{t_0}^\cdot e^{\imu(\cdot-s)\Delta} \bdyop(v, b \cdot \nabla u)(s) \dd s \Big\|_{\X(I)}
 			\lesssim& \|v\|_{L^\infty(I;L^2_x)} \|b \cdot \nabla u\|_{L^{\frac{8}{5}}(I;L^{\frac{12}{7}}_x)} \nonumber\\
 			\lesssim& \|v\|_{L^\infty(I;L^2_x)} \|b\|_{L^{8}(I;L^{\frac{12}{5}}_x)} \|\langle \nabla \rangle u\|_{L^2(I;L^6_x)}   \notag \\
 			\lesssim& |I|^{\frac{1}{8}} \|b\|_{L^{\infty}(I;H^1_x)} \|v\|_{\Y(I)} \|u\|_{\X(I)}. \label{eq:EstDuhamelBdyvbnablau}
 		\end{align}
Finally, estimate \eqref{eq:EstCubicLowerOrderTerms} follows from estimates~\eqref{eq:EstDuhamelBdyvcu} and~\eqref{eq:EstDuhamelBdyvbnablau}.
 	\end{proof}

\section{Well-posedness up to the maximal existence time}  \label{Sec-WP-Max-Time}

We prepare the proof of the well-posedness theorem by showing several properties of the noise which we use in the construction of the solutions.

\begin{lemma}  \label{Lem-bc-HolderCont}
Let $T\in (0,\infty)$ and $\kappa\in (0,1/2)$.
Then, $W_1$ is $C^\kappa$-H\"older continuous in $H^3$ and
$W_2$ and the process $t \mapsto \int_0^t e^{\imu(t -s)|\na|} \dd W_2(s)$ are $C^\kappa$-H\"older
continuous in $H^1$.
Moreover, for every $j=1,2,3$ and
for $\bbp$-a.e. $\omega\in \Omega$,
there exists a sequence $(n_l(\omega))_{l \in \N}$ in $\N$ with $n_l(\omega) \rightarrow \infty$ as $l \rightarrow \infty$ such that
\begin{align}  \label{phikbetak-limit-0}
   \sum\limits_{k = n_l}^\infty \int  \sup_{y\in \bbr^2} |\na \phi^{(1)}_k(r \vece_j+y)| \dd r
   \sup\limits_{t\in [0,T]} |\beta^{(1)}_k(t,\omega)|
  \longrightarrow 0,\ \ \text{as}\ l \to \infty.
\end{align}
\end{lemma}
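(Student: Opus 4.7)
The plan is to deduce the three H\"older continuity statements from Kolmogorov's continuity theorem combined with the It\^o isometry and Gaussian hypercontractivity, and to obtain \eqref{phikbetak-limit-0} from the a.s.\ summability of an explicit series.

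For $W_1$ and $W_2$ themselves, the It\^o isometry on the Hilbert spaces $H^3$ and $H^1$ gives
\[
   \bbe \|W_1(t) - W_1(s)\|_{H^3}^2 = |t-s| \sum_{k=1}^\infty \|\phi_k^{(1)}\|_{H^3}^2, \qquad
   \bbe \|W_2(t) - W_2(s)\|_{H^1}^2 = |t-s| \sum_{k=1}^\infty \|\phi_k^{(2)}\|_{H^1}^2,
\]
both sums being finite by~\eqref{phik-condition}. Since $W_j(t)-W_j(s)$ is centered Gaussian in the respective Hilbert space, Gaussian hypercontractivity upgrades this to $\bbe\|W_j(t)-W_j(s)\|_X^{2p}\lesssim_p |t-s|^p$ for every $p\geq 1$. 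Choosing $p$ large and applying Kolmogorov's continuity theorem yields $C^\kappa$ H\"older continuity of $W_j$ in the respective Hilbert space for every $\kappa<1/2$.

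For the stochastic convolution $Z(t):=\int_0^t e^{\imu(t-s)|\na|}\dd W_2(s)$, a naive splitting
$Z(t)-Z(s)=(e^{\imu(t-s)|\na|}-I)Z(s)+\int_s^t e^{\imu(t-r)|\na|}\dd W_2(r)$
controls the stochastic term at the $\sqrt{t-s}$ rate, but the boundary term $(e^{\imu(t-s)|\na|}-I)Z(s)$ has no quantitative $H^1$-rate at the given regularity of $\{\phi_k^{(2)}\}$. To avoid this, I will use the factorization method: for $\alpha\in(0,1/2)$ the auxiliary process
\[
   Y_\alpha(s) := \int_0^s (s-r)^{-\alpha} e^{\imu(s-r)|\na|} \dd W_2(r)
\]
has variance $\bbe\|Y_\alpha(s)\|_{H^1}^2 \leq \frac{s^{1-2\alpha}}{1-2\alpha} \sum_k \|\phi_k^{(2)}\|_{H^1}^2$, which is finite since $e^{\imu r|\na|}$ is an isometry on $H^1$. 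By Gaussian hypercontractivity and Fubini, $Y_\alpha\in L^{2p}(0,T;H^1)$ $\bbp$-a.s.\ for every $p\geq 1$. The factorization identity
\[
   Z(t) = \frac{\sin(\pi\alpha)}{\pi} \int_0^t (t-s)^{\alpha-1} e^{\imu(t-s)|\na|} Y_\alpha(s) \dd s,
\]
together with the boundedness of the deterministic convolution operator on the right from $L^{2p}(0,T;H^1)$ into $C^{\alpha-\frac{1}{2p}}([0,T];H^1)$ (using only that the group is isometric), then yields $Z\in C^\kappa([0,T];H^1)$ for every $\kappa<1/2$ after choosing $\alpha$ close to $1/2$ and $p$ large.

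For \eqref{phikbetak-limit-0}, set $a_k^{(j)} := \int \sup_{y\in \bbr^2}|\phi_k^{(1)}(r\vece_j+y)|\dd r$ and $M_k := \sup_{t\in[0,T]}|\beta_k^{(1)}(t)|$. Doob's maximal inequality and $\bbe|\beta_k^{(1)}(T)|^2 = T$ give $\bbe M_k \lesssim \sqrt{T}$ uniformly in $k$, and \eqref{phik-condition} ensures $\sum_k a_k^{(j)}<\infty$. Fubini-Tonelli then yields
\[
   \bbe \sum_{k=1}^\infty a_k^{(j)} M_k \lesssim \sqrt{T} \sum_{k=1}^\infty a_k^{(j)} < \infty,
\]
so $\sum_{k=1}^\infty a_k^{(j)} M_k < \infty$ $\bbp$-a.s. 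Consequently its tails vanish as $n\to\infty$ $\bbp$-a.s., and one may take $n_l(\omega)=l$. The main technical point is the factorization argument for $Z$; the remaining steps are standard Gaussian computations.
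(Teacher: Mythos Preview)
Your treatment of $W_1$, $W_2$, and the tail convergence \eqref{phikbetak-limit-0} is correct and matches the paper's approach; your tail argument is in fact slightly sharper, since finiteness of $\bbe\sum_k a_k^{(j)}M_k$ gives a.s.\ vanishing of the tails along the full sequence $n_l=l$, whereas the paper only shows convergence in probability of the tails and extracts an a.s.\ convergent subsequence.

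There is, however, a gap in your argument for the stochastic convolution $Z$. The claim that the deterministic operator $f\mapsto\int_0^t(t-s)^{\alpha-1}e^{\imu(t-s)|\na|}f(s)\,ds$ maps $L^{2p}(0,T;H^1)$ into $C^{\alpha-\frac{1}{2p}}([0,T];H^1)$ \emph{using only that the group is isometric} is not correct. Writing $S(t)=e^{\imu t|\na|}$, this operator factors as $S(t)\,I_\alpha\big(S(-\cdot)f\big)(t)$ with $I_\alpha$ the scalar Riemann--Liouville fractional integral; its increment therefore contains the term $(S(t)-S(s))\,I_\alpha\big(S(-\cdot)f\big)(s)$, which by strong continuity is only $o(1)$ and carries no quantitative rate without additional spatial regularity. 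The H\"older mapping property you invoke is standard for \emph{analytic} semigroups, where smoothing absorbs this term, but fails for unitary groups: the boundary-term obstruction you correctly identified in the naive splitting is not removed by the factorization. A route that does go through is to work with the martingale $\wt Z(t):=\int_0^t e^{-\imu s|\na|}\,\dd W_2(s)$, for which the It\^o isometry and unitarity give $\bbe\|\wt Z(t)-\wt Z(s)\|_{H^1}^2=(t-s)\sum_k\|\phi_k^{(2)}\|_{H^1}^2$, so Kolmogorov applies directly; since $\int_s^t e^{\imu(t-r)|\na|}\,\dd W_2(r)=e^{\imu t|\na|}\big(\wt Z(t)-\wt Z(s)\big)$, this yields precisely the increment bound $\|\cT_{\sigma+t,\sigma}(W_2)\|_{H^1}\lesssim t^\kappa$ used in Step~4 of the proof of Theorem~\ref{Thm-LWP}. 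The paper itself only says the argument for $Z$ is ``similar'' to that for $W_1$, invoking unitarity; read in this light it is the H\"older continuity of $\wt Z$ that is both provable and needed.
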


\begin{proof}
We first note that under condition \eqref{phik-condition},
$W_1$ is a continuous and $\{\mathscr{F}_t\}$-adapted Wiener process in $H^3$
(see, e.g., \cite{PR07}).
Moreover, for any $p\geq 1$,
using the Gaussianity, we obtain for $0\leq s<t\leq T$
\begin{align*}
   \bbe \|W_1(t)- W_1(s)\|_{H^3}^p
   \lesssim (\bbe \|W_1(t)- W_1(s)\|_{H^3}^2)^{\frac p2}
   \lesssim \Big(\sum\limits_{k=1}^\infty \|\phi^{(1)}_k\|_{H^3}^2\Big)^\frac p2 (t-s)^{\frac p2}.
\end{align*}
The Kolmogorov continuity criterion thus implies the $C^\kappa$-H\"older continuity of $W_1$ in $H^3$.

The assertion for $W_2$
and $\int_0^\cdot e^{\imu (\cdot -s)|\na|} \dd W_2(s)$ can be proved similarly using condition~\eqref{phik-condition}
and the fact that $e^{\imu t |\na|}$ is unitary in $H^1$.

In order to prove \eqref{phikbetak-limit-0},
we note that
\begin{align*}
   & \bbe  \sum\limits_{k=n}^\infty \int  \sup_{y\in \bbr^2} |\na \phi^{(1)}_k(r \vece_j+y)| \dd r \sup\limits_{t\in [0,T]} |\beta^{(1)}_k(t)|  \notag \\
   &=    \sum\limits_{k=n}^\infty \int  \sup_{y\in \bbr^2} |\na \phi^{(1)}_k(r\vece_j+y)|\dd r \, \bbe \sup\limits_{t\in [0,T]}|\beta^{(1)}_k(t) |
   \lesssim   T^\frac 12  \sum\limits_{k=n}^\infty \int  \sup_{y\in \bbr^2} |\phi^{(1)}_k(r \vece_j+y)| \dd r,
\end{align*}
which tends to zero as $n \to \infty$ due to the summability in~\eqref{phik-condition}.
We infer that
\begin{align*}
   \sum\limits_{k=n}^\infty \int  \sup_{y\in \bbr^2} |\na \phi^{(1)}_k(r \vece_j+y)| \dd r \sup\limits_{t\in [0,T]} |\beta^{(1)}_k(t)|
   \longrightarrow 0 \ \ \text{in\ probability as}\ n \to \infty,
\end{align*}
which yields the almost sure convergence up to some subsequence $(n_l)_{l \in \N}$.
\end{proof}

We are now ready to prove the local well-posedness result in Theorem \ref{Thm-LWP}.
\medskip

\paragraph{\bf Proof of Theorem \ref{Thm-LWP}.}
Fix any $T\in (0,\infty)$.
Write $\X(0,\tau):= \X([0,\tau])$ and $\Y(0,\tau):= \Y([0,\tau])$
for $\tau \in (0,\infty)$.
The proof proceeds in four steps.

{\bf Step $1$.}
We define the fixed point operator $\Phi = (\Phi_1, \Phi_2)$
by the right-hand side of \eqref{eq:RZakharovNormalForm}.
More precisely,
let $\cU_t$ and $\cI_t$ denote the homogeneous and inhomogeneous flow operators,
respectively, for the linear Schr\"odinger equation.
In view of \eqref{eq:RZakharovNormalForm}, we define
\begin{align}  \label{Phi1-def}
  \Phi_1(u,v)(t) &:= \cU_t(u_0 + \bdyop(v_0,u_0))  -\bdyop(v,u) - \imu  \cI_t(v u)_{R}
         + \imu  \cI_t(b \cdot \nabla u + c u - \mathcal{T}_{\cdot}(W_2)u) \notag \\
           &\qquad + \imu \cI_t \bdyop(|\nabla| |u|^2, u)
          - \imu \cI_t \bdyop(v, vu)
          + \imu \cI_t \bdyop(v,  b \cdot \nabla u + cu -\mathcal{T}_{\cdot}(W_2)u)  \notag \\
          &=: \cU_t(u_0 + \bdyop(v_0,u_0)) + \sum\limits_{j=1}^6 \Phi_{1,j}(v,u)(t),
\end{align}
and
\begin{align}    \label{v(u)-def}
    \Phi_2(u,v)(t) :=  \cV_t v_0 + \imu  \int_0^t e^{\imu (t-s) |\nabla|} (|\nabla| |u|^2)(s) \dd s
\end{align}
for $t \in [0,T]$, with $\cV_t = e^{\imu t |\na|}$ and initial data
\begin{align}
    (u_0,v_0):= (X_0, Y_0).
\end{align}

First, we note that Lemma~\ref{lem:QuadraticWave} yields
\begin{align}  \label{v(u)-esti-fp}
   \| \Phi_2(u,v)\|_{\Y(0,\tau)}
   \leq \|Y_0\|_{L^2} + C \tau^\frac 14 \|u\|_{\X(0,\tau)}^2,
\end{align}
where $C$ is the implicit constant from~\eqref{eq:EstQuadraticWave}.

For the first component of the fixed point operator, the multilinear estimates in Section~\ref{Sec-Multi-Esti} imply
\begin{align}  \label{Phi11-esti.0}
  \|\Phi_1(u,v)\|_{\X(0,\tau)}
  &\lesssim  (1+ K^{-1} \|Y_0\|_{L^2}) \|X_0\|_{H^1}
             + (K^{-1} + \tau^{\frac 18})\|v\|_{\Y(0,\tau)}\|u\|_{\X(0,\tau)} \notag \\
            &\qquad  + \tau^\frac 14 K \log_2K \|v\|_{\Y(0,\tau)}\|u\|_{\X(0,\tau)}
            + \Big(\tau^\frac 14\|b\|_{L^\infty((0,\tau),H^1_x)}
             +  \sum\limits_{j=1}^3\|b\|_{L^{1,\infty}_{\vece_j}((0,\tau)\times \bbr^3)}  \notag \\
            &\qquad \qquad + \tau^\frac 14 \|c\|_{L^\infty((0,\tau),H^1_x)}
            + \tau^{\frac 14}  \|\mathcal{T}_{\cdot}(W_2)\|_{L^\infty((0,\tau),H^1_x)} \Big) \|u\|_{\X(0,\tau)}  \notag \\
          &\qquad + \tau^\frac 14 \|u\|_{\X(0,\tau)}^3
            + \tau^\frac 18 \|v\|^2_{\Y(0,\tau)}\|u\|_{\X(0,\tau)} \notag \\
          &\qquad + \tau^\frac 18  (\|b\|_{L^\infty((0,\tau);H^1_x)}
           + \|c\|_{\Y(0,\tau)} +  \|\mathcal{T}_{\cdot}(W_2)\|_{\Y(0,\tau)} ) \|v\|_{\Y(0,\tau)}\|u\|_{\X(0,\tau)}.
\end{align}
We note that by \eqref{b-def} and \eqref{c-def}, we have
\begin{align}
   & \|b(t)\|_{H^1}
      +  \sum\limits_{j=1}^3\|b\|_{L^{1,\infty}_{\vece_j}((0,\tau)\times \bbr^3)} 
               + \|c(t)\|_{H^1} +  \|\mathcal{T}_{t}(W_2)\|_{H^1}  \notag \\
  &\lesssim \|\na W_1(t)\|_{H^1}
            + \sum\limits_{j=1}^3 \sum\limits_{k=1}^\infty \int  \sup_{y\in \bbr^2}
           |\na \phi^{(1)}_k(r \vece_j+y)| \dd r  \sup\limits_{s\in[0,t]}  |\beta^{(1)}_k(s)|
            +  \|W_1(t)\|_{H^3}^2 + \|W_1(t)\|_{H^3} \notag  \\
  &\qquad +   \|\mathcal{T}_{t}(W_2)\|_{H^1}   =: W^*(t).
\end{align}
Recall that $K$ is a dyadic integer with $K \geq 2^5$. We thus obtain for $\tau\leq 1$
\begin{align} \label{Phi1-esti}
  \|\Phi_1(u,v)\|_{\X(0,\tau)}
  &\lesssim  (1+ K^{-1} \|Y_0\|_{L^2}) \|X_0\|_{H^1}
             + (K^{-1} + \tau^{\frac 18} K \log_2K  + \|W^*\|_{C([0,\tau])} ) \|v\|_{\Y(0,\tau)}\|u\|_{\X(0,\tau)}  \notag \\
          &\qquad  +  \tau^\frac 18 \|v\|^2_{\Y(0,\tau)}\|u\|_{\X(0,\tau)}
              + \tau^\frac 18 \|u\|^3_{\X(0,\tau)}
\end{align}
where the implicit constant is independent of $\tau, K, \|X_0\|_{H^1}$, and $\|Y_0\|_{L^2}$.

We next define
\begin{align*}
	M := \max\{10 C_0 \|X_0\|_{H^1}, 10 \|Y_0\|_{L^2}, 1\},
\end{align*}
where $C_0$ is the maximum of the implicit constant in \eqref{Phi1-esti} and the constant in~\eqref{v(u)-esti-fp}, as well as
\begin{align*}
	B_{\X(0,\tau) \times \Y(0,\tau)}(M) := \{(u,v) \in \X(0,\tau) \times \Y(0,\tau) \colon \|u\|_{\X(0,\tau)} +  \|v\|_{\Y(0,\tau)} \leq M\}.
\end{align*}
We then fix a dyadic integer $K = K(\|X_0\|_{H^1}, \|Y_0\|_{L^2}) \geq 2^5$ large enough such that $K^{-1}\|Y_0\|_{L^2} <\frac{1}{10}$ and $K^{-1} C_0 M < \frac{1}{10}$,
and subsequently define the $\{\mathscr{F}_t\}$-stopping time
\begin{align*}
   \tau:=\inf \left\{t\in [0,T] \colon t^\frac 18 C_0 (K\log_2(K)M
           +  M^2) \geq \frac{1}{10},  \
            W^*(t)  C_0 M \geq \frac{1}{10} \right\} \wedge T.
\end{align*}
Employing Lemma~\ref{phikbetak-limit-0}, we note that $W^*(t) \rightarrow 0$ as $t \rightarrow 0$ $\bbp$-a.s., so that $\bbp$-a.s. $\tau > 0$.
The estimates in~\eqref{Phi1-esti} and~\eqref{v(u)-esti-fp} thus yield
\begin{align*}
   \Phi(B_{\X(0,\tau) \times \Y(0,\tau)}(M)) \subseteq B_{\X(0,\tau) \times \Y(0,\tau)}(M).
\end{align*}

\medskip
{\bf Step $2$.}
The contraction of the operator $\Phi$ can be proved similarly.
Actually, take any $(u_1,v_1),(u_2,v_2)\in B_{\X(0,\tau) \times \Y(0,\tau)}(M)$.
We then have
\begin{align*}
   \|u_j\|_{\X(0,\tau)} + \|v_j\|_{\Y(0,\tau)} \leq M = C(\|X_0\|_{H^1}, \|Y_0\|_{L^2}) \quad \text{for } j = 1,2.
\end{align*}
Lemma~\ref{lem:QuadraticWave} yields
\begin{align}  \label{vu1-vu2-diff}
   \|\Phi_2(u_1,v_1) - \Phi_2(u_2,v_2)\|_{\Y(0,\tau)}
   &\leq \left\|\int_0^t e^{\imu (t-s) |\nabla|} (|\na||u_1|^2 - |\na||u_2|^2)(s) \dd s \right\|_{\Y(0,\tau)} \notag \\
   &\leq C_0 \tau^\frac 14 (\|u_1\|_{\X(0,\tau)}+\|u_2\|_{\X(0,\tau)}) \|u_1-u_2\|_{\X(0,\tau)} \notag \\
   &\leq \tau^\frac 18 \|u_1-u_2\|_{\X(0,\tau)},
\end{align}
where we used that $C_0 \tau^\frac 18 M \leq 1/10$ in the last step,
due to the definition of $\tau$.

In the following we will frequently use the multilinear estimates from Section~\ref{Sec-Multi-Esti} and that
$\|u_j\|_{\X(0,\tau)} + \|v_j\|_{\Y(0,\tau)} \leq C(\|X_0\|_{H^1}, \|Y_0\|_{L^2})$.
Hence, the implicit constants below may depend on $\|X_0\|_{H^1}$ and $\|Y_0\|_{L^2}$.

Applying Lemma \ref{lem:Bdy}, we get
\begin{align*}
   \|\Phi_{1,1}(u_1,v_1) - \Phi_{1,1}(u_2,v_2)\|_{\X(0,\tau)}
   &= \|\bdyop(v_1-v_2, u_1) + \bdyop(v_2, u_1-u_2)\|_{\X(0,\tau)} \notag \\
   &\lesssim (K^{-1} + \tau^\frac 18) (\|v_1-v_2\|_{\Y(0,\tau)}\|u_1\|_{\X(0,\tau)} + \|v_2\|_{\Y(0,\tau)}\|u_1 - u_2\|_{\X(0,\tau)}) \notag \\
   &\lesssim (K^{-1} + \tau^\frac 18) \|(u_1,v_1) - (u_2,v_2)\|_{\X(0,\tau) \times \Y(0,\tau)}.
\end{align*}
In the same way, Lemma \ref{lem:Quadratic} yields
\begin{align*}
   \|\Phi_{1,2}(u_1,v_1) -\Phi_{1,2}(u_2,v_2)\|_{\X(0,\tau)}
   &\leq \|\cI_t((v_1 - v_2)u_1)_{R}\|_{\X(0,\tau)}
            +  \|\cI_t(v_2 (u_1-u_2))_{R}  \|_{\X(0,\tau)}  \notag \\
   &\lesssim \tau^\frac 14 K\log_2K \|(u_1,v_1) - (u_2,v_2)\|_{\X(0,\tau) \times \Y(0,\tau)},
\end{align*}
and
\begin{align*}
   \|\Phi_{1,3}(u_1,v_1) -\Phi_{1,3}(u_2,v_2)\|_{\X(0,\tau)}
    \lesssim  \|W^*\|_{C([0,\tau])}  \|u_1-u_2\|_{\X(0,\tau)} .
\end{align*}
We also derive via Lemma \ref{lem:Cubic} that
\begin{align*}
   \|\Phi_{1,4}(u_1,v_1) -\Phi_{1,4}(u_2,v_2)\|_{\X(0,\tau)}
   \lesssim \tau^\frac 14 (\|u_1\|^2_{\X(0,\tau)} + \|u_2\|^2_{\X(0,\tau)} ) \|u_1-u_2\|_{\X(0,\tau)}
   \lesssim \tau^\frac 14 \|u_1-u_2\|_{\X(0,\tau)}
\end{align*}
and
\begin{align*}
   \|\Phi_{1,5}(u_1,v_1) -\Phi_{1,5}(u_2,v_2)\|_{\X(0,\tau)}
   &\lesssim \|\cI_\cdot \bdyop(v_1-v_2, v_1 u_1)\|_{\X(0,\tau)}
            +  \|\cI_\cdot\bdyop(v_2, (v_1-v_2)u_1)\|_{\X(0,\tau)}   \notag \\
           &\qquad +   \|\cI_\cdot\bdyop(v_2, v_2(u_1-u_2))\|_{\X(0,\tau)}  \notag \\
   &\lesssim \tau^\frac 18 \big( \|v_1-v_2\|_{\Y(0,\tau)} \|v_1\|_{\Y(0,\tau)} \|u_1\|_{\X(0,\tau)}  \notag \\
           &\qquad \qquad +  \|v_2\|_{\Y(0,\tau)} \|v_1-v_2\|_{\Y(0,\tau)} \|u_1\|_{\X(0,\tau)}   \notag \\
           &\qquad \qquad +   \|v_2\|_{\Y(0,\tau)}  \|v_2\|_{\Y(0,\tau)} \|u_1-u_2\|_{\X(0,\tau)} \big)   \notag \\
   &\lesssim \tau^\frac 18 \|(u_1,v_1) - (u_2,v_2)\|_{\X(0,\tau) \times \Y(0,\tau)}.
\end{align*}
Regarding the last term $\Phi_{1,6}$, we obtain
\begin{align*}
   \|\Phi_{1,6}(u_1,v_1) -\Phi_{1,6}(u_2,v_2)\|_{\X(0,\tau)}
   &\lesssim \|\cI_\cdot \bdyop(v_1-v_2,  b \cdot \nabla u_1 + c u_1 -\mathcal{T}_{\cdot}(W_2)u_1 ) \|_{\X(0,\tau)}  \notag \\
           &\qquad +  \|\cI_\cdot \bdyop(v_2,  b \cdot \nabla (u_1-u_2) + c(u_1-u_2) -\mathcal{T}_{\cdot}(W_2) (u_1-u_2) ) \|_{\X(0,\tau)} \notag \\
   &\lesssim \tau^\frac 18 \|v_1 - v_2\|_{\Y(0,\tau)}\|W^*\|_{C([0,\tau])} \|u_1\|_{\X(0,\tau)}  \notag \\
           &\qquad  + \tau^\frac 18 \|v_2\|_{\Y(0,\tau)}\|W^*\|_{C([0,\tau])} \|u_1 - u_2\|_{\X(0,\tau)} \notag \\
   &\lesssim \tau^\frac 18 \|(u_1,v_1) - (u_2,v_2)\|_{\X(0,\tau) \times \Y(0,\tau)},
\end{align*}
where we also used the fact that $\|W^*\|_{C([0,\tau])} \lesssim 1$ in the last step.

We thus showed that
\begin{align*}
   &\|\Phi(u_1,v_1) -\Phi(u_2,v_2)\|_{\X(0,\tau) \times \Y(0,\tau)} \\
   &\leq C'(\|X_0\|_{H^1}, \|Y_0\|_{L^2})
         (K^{-1} + \tau^\frac 18 + \tau^\frac 14 K \log_2K
           + \|W^*\|_{C([0,\tau])}) \|(u_1,v_1) - (u_2,v_2)\|_{\X(0,\tau) \times \Y(0,\tau)}.
\end{align*}
Taking $K=K(\|X_0\|_{H^1}, \|Y_0\|_{L^2})$ possibly larger such that
$K^{-1} C'(\|X_0\|_{H^1}, \|Y_0\|_{L^2}) < \frac{1}{10}$, updating the definition of $\tau$,
and defining the $\{\mathscr{F}_t\}$-stopping time
\begin{align}
   \tau_1:= \inf\{t\in [0,T] \colon
    C'(\|X_0\|_{H^1}, \|Y_0\|_{H^1}) (t^\frac 18 K \log_2K + W^*(t)) \geq \frac{1}{10} \} \wedge \tau,
\end{align}
we infer that
$\Phi$ is a contractive selfmapping on $B_{\X(0,\tau_1) \times \Y(0,\tau_1)}(M)$. Note that $\tau_1 > 0$ $\bbp$-a.s. as $W^*(t) \rightarrow 0$ for $t \rightarrow 0$ $\bbp$-a.s.

In conclusion, we obtain that there exists a
small constant $\ve_*(\|X_0\|_{H^1}, \|Y_0\|_{L^2}) > 0$
which is decreasing with respect to its arguments
such that
$\Phi$ is a contractive selfmapping on a closed ball in $\X(0,\tau_1) \times \Y(0,\tau_1)$,
where
\begin{align}
   \tau_1 = \inf\{t\in [0,T]: t\geq \ve_*(\|X_0\|_{H^1}, \|Y_0\|_{L^2}),
   W^*(t) \geq \ve_*(\|X_0\|_{H^1}, \|Y_0\|_{L^2}) \}
    \wedge T
\end{align}
is an $\{\mathscr{F}_t\}$-stopping time.
Hence, there exists
$(\wt u_1, \wt v_1) \in C([0,\tau_1]; H^1 \times L^2) \cap (\X(0,\tau_1) \times \Y(0,\tau_1))$
such that $\Phi_1(\wt u_1, \wt v_1) = (\wt u_1, \wt v_1)$.
Letting $(u_1(t), v_1(t)):= (\wt u_1(t\wedge \tau_1), \wt v_1(t\wedge \tau_1))$ for all $t\in [0,T]$,
we infer that $(u_1, v_1)$ is an $\{\mathscr{F}_t\}$-adapted continuous process in $H^1 \times L^2$ (see e.g.~\cite{BRZ14} for the relevant arguments) and solves \eqref{eq:RZakharovNormalForm} on $[0,\tau_1]$,
thus also \eqref{equa-ranZak}
due to the equivalence results in Propositions \ref{Prop-Equiv-Weak-Mild} and \ref{Prop-Equiv-Mild-Norm}.

\medskip
{\bf Step $3$.}
In order to extend the solution to the maximal existence time,
we use the refined rescaling transforms
and the gluing procedure combined
with induction arguments.

Suppose that for some $n\geq 1$, $(u_n, v_n)$ is an $\{\mathscr{F}_t\}$-adapted continuous process in $H^1\times L^2$
which solves~\eqref{equa-stoZak} on $[0,\sigma_n]$
and satisfies $(u_n, v_n) \equiv (u_n(\sigma_n), v_n(\sigma_n))$ on $[\sigma_n,T]$,
where $\sigma_n (\leq T)$ is an $\{\mathscr{F}_t\}$-stopping time.

Analogous to \eqref{Phi1-def} and~\eqref{v(u)-def},
we define the operator $\Phi_{n+1} = (\Phi_{n+1,1}, \Phi_{n+1,2})$ on $\X(0,T) \times \Y(0,T)$ by
\begin{align}  \label{Phin+1-def}
  \Phi_{n+1,1}(u,v)(t)&:= \cU_t(u_{0,n} + \bdyop(v_{0,n},u_{0,n}))  -\bdyop(v,u) - \imu  \cI_t(vu)_{R}
         + \imu  \cI_t(b_{\sigma_n} \cdot \nabla u + c_{\sigma_n} u - \mathcal{T}_{\sigma_n+\cdot, \sigma_n}(W_2)u) \notag \\
           &\qquad + \imu \cI_t \bdyop(|\nabla| |u|^2, u)
          - \imu \cI_t \bdyop(v, vu)
          + \imu \cI_t \bdyop(v,  b_{\sigma_n} \cdot \nabla u + c_{\sigma_n} u - \mathcal{T}_{\sigma_n+\cdot, \sigma_n}(W_2)u)
\end{align}
and
\begin{align}
   \Phi_{n+1,2}(u,v)(t) :=  \cV_t v_0 + \imu  \int_0^t e^{\imu (t-s) |\nabla|} (|\nabla| |u|^2)(s) \dd s
\end{align}
for all $t \in [0,T]$, where
$ \mathcal{T}_{\sigma_n+\cdot, \sigma_n}(W_2)$ is given by \eqref{Tsigmat-W2},
the new perturbation coefficients $b_{\sigma_n}$ and  $c_{\sigma_n}$ are defined as in \eqref{b-def} and \eqref{c-def},
respectively,
with $W_1$ replaced by $W_{1,\sigma_n}$ defined in \eqref{W-sigma-rescal}, i.e.,
\begin{align*}
   b_{\sigma_n}(t) = 2 \na W_{1,\sigma_n}(t) , \ \
   c_{\sigma_n}(t) = |\na W_{1,\sigma_n}(t)|^2 + \Delta W_{1,\sigma_n}(t),\ \ t\in [0,T],
\end{align*}
and the initial data is given by
\begin{align*}
   (u_{0,n}, v_{0,n}) := (e^{W_1(\sigma_n)} u_n(\sigma_n), v_n(\sigma_n) + \cT_{\sigma_n}(W_2)).
\end{align*}
Furthermore, we set
\begin{align}  \label{W*sigman-def}
  W^*_{\sigma_n}(t)
  &:= \|W_{1,\sigma_n}(t) \|_{H^3}
     +  \sum\limits_{j=1}^3 \sum\limits_{k=1}^\infty \int  \sup_{y\in \bbr^2}
         |\na \phi^{(1)}_k(r \vece_j+y)| \dd r
         \sup\limits_{s\in[0,t]}  |\beta^{(1)}_k(\sigma_n+s) - \beta^{(1)}_k(\sigma_n)|
   \notag \\
  &\qquad + \|W_{1,\sigma_n}(t) \|_{H^3}^2
  + \|\cT_{\sigma_n+t, \sigma_n}(W_2)\|_{H^1}.
\end{align}
We define the $\{\mathcal{F}_{\sigma_n+t}\}$-stopping time
\begin{align*}
   \tau_{n+1} &:= \inf\{t\in [0,T] \colon t\geq \ve_*(\|u_{0,n}\|_{H^1}, \|v_{0,n}\|_{L^2}),
    W^*_{\sigma_n}(t) \geq \ve_*(\|u_{0,n}\|_{H^1}, \|v_{0,n}\|_{L^2})  \}
   \wedge (T-\sigma_n).
\end{align*}
and
\begin{align*}
   \sigma_{n+1} : = \sigma_n + \tau_{n+1}.
\end{align*}
Then $\sigma_{n+1}$  is an $\{\mathcal{F}_t\}$-stopping time,
see e.g. \cite{BRZ14} for the relevant arguments, and $\sigma_{n+1} \leq T$.

Proceeding as in Step $1$ and Step $2$,
we infer that $\Phi_{n+1}$ is a contractive selfmapping on a closed subset of
$\X(0,\tau_{n+1}) \times \Y(0,\tau_{n+1})$,
and thus there exists $(\wt u_{\sigma_{n+1}}, \wt v_{\sigma_{n+1}})  \in \X(0,\tau_{n+1}) \times \Y(0,\tau_{n+1})$
such that
$\Phi_{n+1} (\wt u_{\sigma_{n+1}}, \wt v_{\sigma_{n+1}}) = (\wt u_{\sigma_{n+1}}, \wt v_{\sigma_{n+1}})$.
In particular,
setting
\begin{align*}
	(u_{\sigma_{n+1}}(t),v_{\sigma_{n+1}}(t)):=(\wt u_{\sigma_{n+1}}(t\wedge\tau_{n+1}),\wt v_{\sigma_{n+1}}(t\wedge\tau_{n+1}))
\end{align*}
for all
$t\in [0,T]$,
we infer that $(u_{\sigma_{n+1}}, v_{\sigma_{n+1}})$ is an $\{\mathscr{F}_{\sigma_n+t}\}$-adapted
continuous process in $H^{1}\times L^2$
and solves \eqref{equa-Zakha-sigma} on $[0,\tau_{n+1}]$,
due to Propositions \ref{Prop-Equiv-Weak-Mild} and \ref{Prop-Equiv-Mild-Norm}.
	
We then use Proposition~\ref{Prop-u1-usigma-glue} to glue together $(u_n,v_n)$ and $(u_{\sigma_{n+1}}, v_{\sigma_{n+1}})$, i.e.,
	 \begin{align*}
		u_{n+1}(t)&:= u_n(t) \chi_{[0,\sigma_n)}(t)
          + e^{-W_1(\sigma_n)}u_{\sigma_{n+1}}( (t-\sigma_n) \wedge \tau_{n+1} ) \chi_{[\sigma_n, T]}(t),   \\
		v_{n+1}(t)&:= v_n(t) \chi_{[0,\sigma_n)}(t)
		+ \(v_{\sigma_{n+1}}( (t-\sigma_n) \wedge \tau_{n+1} ) - e^{\imu ( (t-\sigma_n) \wedge \tau_{n+1} )|\na|}\cT_{\sigma_n}(W_2)\)
		   \chi_{[\sigma_n, T]}(t).
	 \end{align*}
We obtain that
$(u_{n+1}, v_{n+1})$ is an $\{\mathscr{F}_t\}$-adapted continuous process in $H^{1}\times L^2$
(see \cite{BRZ14} for the relevant arguments),
$(u_{n+1}, v_{n+1}) = (u_n,v_n)$ on $[0,\sigma_n)$,
and $(u_{n+1},v_{n+1}) \equiv (u_{n+1}(\sigma_{n+1}), v_{n+1}(\sigma_{n+1}))$
on $[\sigma_{n+1}, T]$.
Moreover, in view of Proposition \ref{Prop-u1-usigma-glue},
$(u_{n+1}, v_{n+1})$ solves \eqref{equa-ranZak} on
the larger time interval $[0, \sigma_{n+1}]$.
	
Proceeding inductively, we thus get an increasing sequence of  $\{\mathscr{F}_t\}$-stopping times $\{\sigma_n\}$ and an $\{\mathscr{F}_t\}$-stopping time  $\tau^*_T (\leq T)$, as well as corresponding
$\{\mathscr{F}_t\}$-adapted continuous processes
	 $(u_n,v_n) \in C([0,T]; H^1 \times L^2)$,
	 $n\geq 1$,
	 such that
	 $\sigma_n \to \tau^*_T$ as $n\to \infty$,
	 $(u_n,v_n)$ solves \eqref{equa-stoZak} on
	 $[0,\sigma_n]$,
	 and $(u_{n+1}, v_{n+1})$ coincides with $(u_n, v_n)$ on $[0,\sigma_n]$ for all $n\geq 1$.
	 In particular,
	 setting $u^T:= \lim_{n \rightarrow \infty} u_n \chi_{[0,\tau_T^*)}$ and $v^T := \lim_{n \rightarrow \infty} v_n \chi_{[0,\tau_T^*)}$,
	 we obtain an analytically weak solution $(u^T,v^T)$ to \eqref{equa-ranZak} on $[0,\tau^*_T)$.
As the uniqueness of solutions is a local property,
it can be derived using similar arguments as in Step $2$.

It is clear that $\tau^*_T$ is increasing with $T$,
and consequently there exists an $\{\mathscr{F}_t\}$-stopping time $\tau^*$
such that $\tau^*_T \to \tau^*$ as $T\to \infty$, $\bbp$-a.s. We set $u := \lim_{T \rightarrow \infty} u^T \chi_{[0,\tau^*)}$ and $v := \lim_{T \rightarrow \infty} v^T \chi_{[0,\tau^*)}$.
By the uniqueness of solutions on $[0,\tau^*_T)$, we thus obtain a unique solution $(u,v)$ to \eqref{equa-ranZak}
on $[0,\tau^*)$.

Using Theorem~\ref{Thm-stocha-weak}, we finally get a unique solution $(X,Y)$ on $[0,\tau^*)$ in the sense of Definition~\ref{Def-weak-sol} by setting $(X,Y) = (e^{W_1} u, v + \cT_{\cdot}(W_2))$.

\medskip
{\bf Step $4$.} It remains to prove the blowup in \eqref{blowup-alter}
if $\tau^*<\infty$. Note that by the boundedness of $\|e^{W_1}\|_{H^2}$ and of $\|\cT_{\cdot}(W_2)\|_{L^2}$ on bounded time intervals, the blow-up alternative~\eqref{blowup-alter} is equivalent to \linebreak
$\lim_{t\to \tau^*} (\|u(t)\|_{H^1} + \|v(t)\|_{L^2}) = \infty$.

Below we consider $\omega\in \Omega$ such that $\tau^*(\omega)<\infty$ and
the convergence \eqref{phikbetak-limit-0} holds.
For simplicity, the dependence on $\omega$ is omitted in the following.

Let us first prove that
if $\tau^*<\infty$ then
\begin{align}  \label{uv-infty-tau*}
	\limsup_{t\to \tau^*} (\|u(t)\|_{H^1} + \|v(t)\|_{L^2}) = \infty.
\end{align}
We argue by contradiction.
Suppose that $\tau^*<\infty$  and
\begin{align*}
	\limsup_{t\to \tau^*} (\|u(t)\|_{H^1} + \|v(t)\|_{L^2}) <\infty.
\end{align*}
Then there exists a constant $C^*$ such that
\begin{align*}
	\|u_{0,n}\|_{H^1} + \|v_{0,n}\|_{L^2} &= \|e^{W_1(\sigma_n)} u_n(\sigma_n)\|_{H^1} + \|v_n(\sigma_n) + \cT_{\sigma_n}(W_2))\|_{L^2} \\
	&\leq (\| W_1(\sigma_n)\|_{H^2} + 1) \|u_n(\sigma_n)\|_{H^1} + \|v_n(\sigma_n)\|_{L^2} + \|\cT_{\sigma_n}(W_2)\|_{L^2} \leq C^* < \infty
\end{align*}
for all $n \in \N$.
Let $T (=T(\omega)) \in (0,\infty)$ be such that $\tau^*<T$
and $\{\sigma_n, \tau_n\}$ be constructed as above.
Then, by construction,
$\sigma_n<T$ for any $n\geq 1$,
and since  $\ve_*(\cdot, \cdot)$ is decreasing with respect to the arguments,
we have either
\begin{align} \label{taun-ve*}
   \tau_{n+1} =  \ve_*(\|u_{0,n}\|_{H^1}, \|v_{0,n}\|_{L^2})
   \geq \ve_*(C^*, C^*),
\end{align}
or
\begin{align} \label{W*sigman-ve*}
    W^*_{\sigma_n}(\tau_{n+1})= \ve_*(\|u_{0,n}\|_{H^1}, \|v_{0,n}\|_{L^2})
    \geq \ve_*(C^*, C^*).
\end{align}

We first discuss the case~\eqref{W*sigman-ve*}.
In view of the convergence \eqref{phikbetak-limit-0},
there exists $n_l \in \N$ such that
\begin{align*}
    \sum\limits_{j=1}^3 \sum\limits_{k=n_l+1}^\infty \int  \sup_{y\in \bbr^2}
    |\na \phi^{(1)}_k(r \vece_j + y)| \dd r \sup\limits_{t\in [0,T]} |\beta^{(1)}_k(t)|
    \leq \frac{1}{20} \ve_*(C^*, C^*).
\end{align*}
Moreover, by the $C^\kappa$-H\"older continuity of Brownian motions,
we infer for the first $n_l$ modes of the noise
\begin{align}
     & \sum\limits_{j=1}^3 \sum\limits_{k=1}^{n_l}
    \int  \sup_{y\in \bbr^2} |\na \phi^{(1)}_k(r \vece_j+y)| \dd r
    \sup\limits_{s \in [0,\tau_{n+1}]}|\beta^{(1)}_k(\sigma_n+s) - \beta^{(1)}_k(\sigma_n)|   \notag \\
   &\leq \sum\limits_{j=1}^3 \sum\limits_{k=1}^{n_l}
    \int  \sup_{y\in \bbr^2} |\na \phi^{(1)}_k(r \vece_j+y)| \dd r\,
          \wt C(k,\kappa,T) \tau_{n+1}^{\kappa}
    =: \wt C(n_l,\kappa, T)  \tau_{n+1}^{\kappa},
\end{align}
where $\wt C(k,\kappa,T) $ is the (random) $C^\kappa$-H\"older norm of $\beta^{(1)}_k$ for $1\leq k\leq n_l$.
Hence, we obtain
\begin{align*}
   &  \sum\limits_{j=1}^3 \sum\limits_{k=1}^\infty \int  \sup_{y\in \bbr^2}
        |\na \phi^{(1)}_k(r \vece_j+y)| \dd r \sup\limits_{s\in[0,\tau_{n+1}]}  |\beta^{(1)}_k(\sigma_n+s) - \beta^{(1)}_k(\sigma_n)|  \\
   &\leq \frac{1}{10} \ve_*(C^*, C^*) + \wt C(n_l, \kappa, T)  \tau_{n+1}^{\kappa}.
\end{align*}
The H\"older continuity of the noise in Lemma \ref{Lem-bc-HolderCont}
also yields that
there exists $\wt C'(\kappa,T)$ such that
\begin{align*}
    & \|W_{1,\sigma_n}(\tau_{n+1}) \|_{H^3}
     + \|W_{1,\sigma_n}(\tau_{n+1}) \|_{H^3}^2
  + \|\cT_{\sigma_n+\tau_{n+1}, \sigma_n}(W_2)\|_{H^1} \notag \\
   &=   \|W_1(\sigma_{n}+\tau_{n+1}) - W_1(\sigma_n)\|_{H^3}
     + \| W_1(\sigma_{n}+\tau_{n+1}) -  W_1(\sigma_n)\|_{H^3}^2 \notag \\
    &\qquad + \Big \|\int_{\sigma_n}^{\sigma_{n}+\tau_{n+1}} e^{\imu(\sigma_n+\tau_{n+1} -s)|\na|} \dd W_2(s) \Big\|_{H^1}  \notag \\
  &\leq \wt C'(\kappa,T) \tau_{n+1}^\kappa.
\end{align*}
Combining the above two estimates and the definition of $W^*_{\sigma_n}$ in \eqref{W*sigman-def},
we infer
\begin{align*}
   W^*_{\sigma_n}(\tau_{n+1})
   \leq  \frac{1}{10} \ve_*(C^*, C^*)
           + ( \wt C^*(n_l, \kappa, T)   +  \wt C'(\kappa,T)) \tau_{n+1}^\kappa.
\end{align*}
If~\eqref{W*sigman-ve*} holds,
we thus get the uniform lower bound
\begin{align} \label{W*sigman-ve*-esti}
   \tau_{n+1} \geq \( \frac{9}{10} \ve_*(C^*, C^*) ( \wt C^*(n_l, \kappa, T)   +  \wt C'(\kappa,T))^{-1}\)^{\frac 1 \kappa}.
\end{align}

Since~\eqref{taun-ve*} also gives such a lower bound for the extension time $\tau_{n+1}$,
we conclude that
$\sigma_{n+1}$ reaches $T$ after finitely many steps,
i.e., $\sigma_N = T$ for some $N$ large enough.
Consequently, we have the contradiction
$\tau^*< T =\sigma_N<\tau^*$,
which proves~\eqref{uv-infty-tau*}, as claimed.

In order to prove \eqref{blowup-alter}, we have to replace the limit superior in~\eqref{uv-infty-tau*} by the limit. To that purpose take any sequence $\wt \sigma_n$ tending to $\tau^*$
and satisfying
\begin{align*}
	\lim_{\wt \sigma_n\to \tau^*} (\|u(\wt \sigma_n)\|_{H^1} + \|v(\wt \sigma_n)\|_{L^2}) <\infty.
\end{align*}
Then, the energy norm of the sequence $(u(\wt \sigma_n),v(\wt \sigma_n))_n$ is uniformly bounded.
Arguing as in Step~3, we can construct extension times $(\wt \tau_{n+1})_n$ such that
\eqref{equa-ranZak} is uniquely solvable on $[0, \wt \sigma_n + \wt \tau_{n+1}]$ for every $n\geq 1$.
As above one sees that there is a lower bound for the extension times $(\wt \tau_{n+1})_n$. Consequently, $\wt \sigma_n + \wt \tau_{n+1} > \tau^*$ for some $n \in \N$. 
In particular,
this implies $\|u\|_{L^\infty([0,\tau^*); H^1)} + \|v\|_{L^\infty([0,\tau^*); L^2)} <\infty$,
which contradicts \eqref{uv-infty-tau*}.

Therefore, we obtain \eqref{blowup-alter} and finish the proof of Theorem \ref{Thm-LWP}.
\hfill $\square$
\medskip

\section{Well-posedness below the ground state}   \label{Sec-WP-Ground}

In this section we prove Theorem \ref{Thm-GWP-Ground}
 using the variational analysis of the ground state $Q$.

Recall that $E_Z$ and $E_S$, introduced in~\eqref{EZ-def} and~\eqref{ES-Hamil},
are the energies of the Zakharov system~\eqref{eq:ZakharovSystemFirstOrder}
and the focusing cubic NLS~\eqref{equa-NLS-cubic}, respectively.
Also recall the action functional $J$ from~\eqref{J-def}
and the corresponding scaling derivative functional $K$ from~\eqref{K-def}.

For any $\lbb>0$, set $Q_\lbb (x):= \lbb Q(\lbb x)$, where $Q$ is the ground state from~\eqref{Ground-def}.
Then $Q_\lbb$ minimizes the action
\begin{align*}
   J_\lbb := E_S + \lbb^2 M,
\end{align*}
that is, $Q$ has the variational characterization
\begin{align} \label{Jlbb-variat}
   \lbb J(Q) = J_\lbb (Q_\lbb)
    = \inf\{J_\lbb(\vf): \vf \not =0,\ K(\vf) =0\},
\end{align}
see~\cite{GNW13}.

In order to prove Theorem~\ref{Thm-GWP-Ground}, we adapt arguments from~\cite{GNW13}, which are based on the variational characterization~\eqref{Jlbb-variat} of
the ground state and the mass conservation of the Schr\"odinger component.

\paragraph{\bf Proof of Theorem \ref{Thm-GWP-Ground}}
In view of the blow-up alternative in Theorem \ref{Thm-LWP},
it is sufficient to prove for $\bbp$-a.e. $\omega\in \Omega$
the boundedness of the $H^1\times L^2$-norm of the solution
$(X(t,\omega),Y(t,\omega))$ on $[0,\sigma_*(\omega) \wedge \tau^*(\omega))$,
i.e.,
\begin{align}  \label{uv-H1L2-bdd}
   \sup\limits_{t \in [0,\sigma_*(\omega) \wedge \tau^*(\omega))}
   (\|X(t,\omega)\|_{H^1} + \|Y(t,\omega)\|_{L^2})<\infty.
\end{align}
For simplicity, we omit the dependence on $\omega$ in the following.

Without loss of generality we assume $X_0 \neq 0$, as otherwise the uniqueness statement in Theorem~\ref{Thm-LWP} leads to $X \equiv 0$ which in turn implies~\eqref{uv-H1L2-bdd}.  We next note that the conservation of mass \eqref{mass-conserv} yields for any $\lbb > 0$ and $t \in [0,\sigma_* \wedge \tau^*)$
\begin{align*}
   E_Z(X(t), Y(t)) + \lbb^2 M(X(t)) - \lbb J(Q)
   = M(X_0) \(\(\lbb - \frac{J(Q)}{2M(X_0)}\)^2 + \frac{4E_Z(X(t),Y(t)) M(X_0) - J^2(Q)}{4 M^2(X_0)}\).
\end{align*}
Using the definition of the stopping time $\sigma_*$ in \eqref{sigma*-def}
and the conservation of mass again, we derive for any $t \in [0, \sigma_* \wedge \tau^*)$
\begin{align*}
   4 E_Z(X(t), Y(t)) M(X_0)
   = 4  E_Z(X(t), Y(t)) M(X(t)) < 4 E_S(Q) M(Q)
   = J^2 (Q),
\end{align*}
where the last identity is a property of the ground state which follows from the variational characterization in~\eqref{Jlbb-variat},
see~\cite[p. 417]{GNW13}.

Hence, we derive that for $\lbb_*:= \frac{J(Q)}{2M(X_0)}$,
independent of $\omega$ and $t$,
\begin{align} \label{Ez-M-J-esti}
   E_Z(X(t), Y(t)) + \lbb_*^2 M(X(t)) < \lbb_* J(Q)  = J_{\lbb_*} (Q_{\lbb_*})
\end{align}
for all $t \in [0, \sigma_* \wedge \tau^*)$, which yields via \eqref{EZ-Hamil}
\begin{align} \label{Jlbbu-JlbbQ}
   J_{\lbb_*} (X(t)) \leq E_Z(X(t), Y(t)) + \lbb_*^2 M(X(t))
   < J_{\lbb_*}(Q_{\lbb_*})
\end{align}
for all $t \in [0, \sigma_* \wedge \tau^*)$. Suppose that $K(X(t_0)) = 0$ for some $t_0\in [0, \sigma_* \wedge \tau^*)$ and $X(t_0)\not = 0$.
Then the variational characterization \eqref{Jlbb-variat} of $Q_{\lbb_*}$ implies that
$J_{\lbb_*}(Q_{\lbb_*}) \leq J_{\lbb_*}(X(t_0))$,
which, however, leads to a contradiction with \eqref{Jlbbu-JlbbQ}.

Consequently, we obtain that for any $t \in [0, \sigma_* \wedge \tau^*)$,
\begin{align}  \label{K-positive}
   K(X(t)) =0 \Leftrightarrow X(t) =0.
\end{align}

If $K(X_0)=0$,
\eqref{K-positive} yields a contradiction to $X_0 \neq 0$.

In the case $K(X_0) > 0$,
we infer
$K(X(t))>0$ for all $t \in [0, \sigma_* \wedge \tau^*)$ from~\eqref{K-positive}.
Combining~\eqref{EZ-Hamil} and~\eqref{K-def} with~\eqref{Ez-M-J-esti}, we thus arrive at
\begin{align*}
   \frac 16 \|\na X(t)\|_{L^2}^2 + \frac{\lbb_*^2}{2}\|X(t)\|_{L^2}
   + \frac 14 \|Y(t) + |X(t)|^2 \|_{L^2}^2
   =& E_Z(X(t), Y(t)) + \lbb_*^2 M(X(t)) - \frac 13 K(X(t))
   < \lbb_* J(Q)
\end{align*}
for all $t \in [0, \sigma_* \wedge \tau^*)$.
In particular, we get the uniform bound
\begin{align} \label{eq:GSH1Bound}
   \|X(t)\|_{H^1} +  \|Y(t) + |X(t)|^2 \|_{L^2}^2  \lesssim 1
\end{align}
on $[0, \sigma_* \wedge \tau^*)$. Employing the Sobolev embedding $H^1 \hookrightarrow L^4$, we further derive
\begin{align} \label{eq:GSL2Bound}
   \|Y(t)\|_{L^2}^2 \lesssim \|Y(t) + |X(t)|^2 \|_{L^2}^2 + \|X(t)\|_{L^4}^4
   \lesssim  \|Y(t) + |X(t)|^2 \|_{L^2}^2 + \|X(t)\|_{H^1}^4
   \lesssim 1,
\end{align}
 for all $t \in [0, \sigma_* \wedge \tau^*)$. The estimates in~\eqref{eq:GSH1Bound} and~\eqref{eq:GSL2Bound} thus imply the $H^1 \times L^2$-boundedness \eqref{uv-H1L2-bdd} of the solution.
\hfill $\square$

\section{Regularization by noise}  \label{Sec-Noise-Reg}

In this section we prove Theorem \ref{Thm-Noise-Reg}
concerning the regularization effect of the {non-conservative} noise
on the well-posedness of the Zakharov system.

As mentioned in Section~\ref{Sec-Intro},
the key point is to prove Strichartz estimates on a bounded time interval for a Schr\"odinger equation with
a nonlocal potential term involving the linear wave $v_1(t) := e^{\imu t|\na|}Y_0$ and the boundary term operator $\bdyop$.

For any interval $I\subseteq [0,T]$,
we define the spaces
\begin{align*}
	S^1(I) := \{f \in C(I; H^1_x) \colon \|f\|_{S^1(I)} < \infty \}, \quad N^1(I) := \{g \in L^1(I; L^2_x) \colon \|g\|_{N^1(I)} < \infty\}
\end{align*}
endowed with the norms
	\begin{align*}
		\|f\|_{S^1(I)} &:= \|f\|_{L^\infty(I;H^1_x)} + \Big(\sum_{N \in 2^\Z} \langle N \rangle^2 \|P_N f\|_{L^2(I;L^6_x)}^2 \Big)^{\frac{1}{2}}, \\
		\|g\|_{N^1(I)} &:= \Big(\sum_{N \in 2^Z} \|P_N g\|_{G_N(I)}^2\Big)^{\frac{1}{2}},
	\end{align*}
where
\begin{align*}
		\|g\|_{G_N(I)} := \inf_{g = g_1 + g_2}\Big( \langle N \rangle \|g_1\|_{L^1(I;L^2_x)}
      + \langle N \rangle \|g_2\|_{L^{\frac{8}{5}}(I;L^{\frac{4}{3}}_x)} \Big), \ \ N \in 2^{\Z}.
\end{align*}
We point out that $S^1(0,T) \subseteq C(I;H^1)$.
Moreover, we still denote $C(I; L^2)$ by $\Y(I)$.

Note that compared with the $\X(I)$- and the $\G(I)$-norm from Section~\ref{Sec-Multi-Esti}, we just dropped the local smoothing component. The local smoothing component was necessary to treat the lower order perturbation terms arising from the conservative noise in~\eqref{equa-ranZak-bc}, which do not appear in~\eqref{equa-ranZak-noncons} as the nonconservative noise is a one-dimensional Brownian motion.

We denote the flow operators
of the homogeneous and inhomogeneous Schrödinger equation with initial time $t_0$ by $\cI_{\cdot; t_0}$ and $\cU_{\cdot; t_0}$, respectively, i.e.,
\begin{align*}
	\cU_{t; t_0}f = e^{\imu (t - t_0) \Delta} f, \qquad \cI_{t; t_0} g = \int_{t_0}^t e^{\imu(t-s)\Delta} g(s) \dd s.
\end{align*}
In the case $t_0 = 0$ we drop $t_0$ from our notation and simply write $\cU$ and $\cI$ for these operators.

Our first observation is that we obtain corresponding multilinear estimates to the ones in Section~\ref{Sec-Multi-Esti} without the local smoothing component, i.e., in the space $S^1(I)$. To be more precise,
arguing as in the proofs of Lemma~\ref{lem:Bdy}, Lemma~\ref{lem:Quadratic}, and Lemma~\ref{lem:Cubic}, we infer the following multilinear estimates.

\begin{lemma} \label{Lem-Stri-cUcT}
Let $I \subseteq \R$ be an interval and $t_0 = \inf I$. We then have 
\begin{align}
		& \|\bdyop(v,u)\|_{S^1(I)} \lesssim (K^{-1} + |I|^{\frac{1}{8}})\|v\|_{\Y(I)} \|u\|_{S^1(I)},    \label{eq:EstBdyTime-S1} \\
        & \Big\|\cI_{\cdot; t_0}(v u)_R \Big\|_{S^1(I)}
               \lesssim  |I|^{\frac{1}{4}} K \log_2 K \|v\|_{\Y(I)} \|u\|_{S^1(I)}, \label{eq:EstQuadraticR-S1}  \\
         &\Big\| \cI_{\cdot, t_0} \bdyop(w_1, w_2 u) \Big\|_{S^1(I)}
          \lesssim |I|^{\frac{1}{8}} \|w_1\|_{\Y(I)} \|w_2\|_{\Y(I)} \|u\|_{S^1(I)}. \label{eq:EstCubicvvu-S1}
\end{align}
Moreover, for any $h\in L^4(I)$ we have
\begin{align}
&\Big\|\cI_{\cdot, t_0} \bdyop(h |\nabla| (u_1 u_2), u_3) \Big\|_{S^1(I)}
       \lesssim \|h\|_{L^4(I)} \|u_1\|_{S^1(I)} \|u_2\|_{S^1(I)} \|u_3\|_{S^1(I)}, \label{eq:EstCubicuuu-S1} \\
        & \Big\|\int_{t_0}^{\cdot} e^{\imu (\cdot - s) |\nabla|} (h(s) |\nabla| (u_1 u_2)(s)) \dd s \Big\|_{\Y(I)}
             \lesssim \|h\|_{L^4(I)} \|u_1\|_{S^1(I)} \|u_2\|_{S^1(I)}.     \label{eq:EstQuadraticWave-S1}         
\end{align}
\end{lemma}

We now provide the crucial estimate for solutions of a type of Schrödinger equation with a nonlocal potential involving the linear wave $v_1$. It is more precisely stated in its mild formulation, see~\eqref{equa-z-Omegav2z-mild}.

\begin{lemma}
	\label{Lem-Est-v1Pot}
	Let $T\in (0,\infty)$, $Y_0 \in L^2$, and $v_1(t) = e^{\imu t |\nabla|} Y_0$ for all $t \in [0,T]$. Let $g \in S^1(0,T)$. Then the equation
	\begin{equation}
		\label{eq:SchrTypeNonlocPot}
		z(t) + \bdyop(v_1, z)(t) + \imu \cI_t(v_1 z)_R + \imu \cI_t \Omega_b(v_1, v_1 z) =  g(t)
	\end{equation}
	has a unique solution $z \in S^1(0,T)$ satisfying~\eqref{eq:SchrTypeNonlocPot} for all $t \in [0,T]$.
Moreover, there is a constant $C = C(T, \|Y_0\|_{L^2}) > 0$, increasing in both its arguments, such that 
\begin{align*}
	\|z\|_{S^1(0,T)} \leq  C \|g\|_{S^1(0,T)}.
\end{align*}
\end{lemma}

\begin{proof}
Let $C_1 \geq 1$ be the maximum of the constants in Lemma~\ref{Lem-Stri-cUcT}, $C_2 \geq 1$ be the constant in Lemma~\ref{lem:LinearEstimates}~\ref{it:LinStrichartz}, and $C_0 = C_1 C_2 \geq 1$. If $C_0 \|Y_0\|_{L^2} \geq 8$, we fix $K \in 2^{\N}$ such that 
\begin{align*}
	4 C_0 \|Y_0\|_{L^2} \leq K \leq 8 C_0 \|Y_0\|_{L^2}.
\end{align*}
Otherwise we take $K = 2^5$. Next we define the constant $c = c(T,\|Y_0\|_{L^2})$ by
\begin{equation}
	\label{eq:DefcTY0}
	c(T,\|Y_0\|_{L^2}) = C_0 \|Y_0\|_{L^2} + C_0 \|Y_0\|_{L^2}^2 + 64 C_0 \|Y_0\|_{L^2} T^{\frac{1}{8}} \max\{C_0 \|Y_0\|_{L^2}, 8\}^2,
\end{equation}
which is increasing in both its arguments. For later reference we already note that
\begin{equation}
	\label{eq:EstKlogK}
	K \log_2 K \leq 64 \max\{C_0 \|Y_0\|_{L^2}, 8\}^2
\end{equation}
by the definition of $K$.

We define a partition $0 = t_0 < t_1 < \ldots < t_L < t_{L+1} = T$ of $[0,T]$ by setting $t_0 := 0$ and
\begin{align} \label{tn-partition-def}
   t_{n+1} := \inf\left\{t\in(t_n, T]: 4 c(T, \|Y_0\|_{L^2})(t - t_{n})^\frac 18 \geq 1 \right\} \wedge T.
\end{align}
Note that
\begin{align}   \label{L-partition-bdd}
    L &\leq  (4 c(T, \|Y_0\|_{L^2}))^8 T,
\end{align}
where the right-hand side is increasing in both $T$ and $\|Y_0\|_{L^2}$.

Obviously, $z$ satisfies~\eqref{eq:SchrTypeNonlocPot} if and only if $z$ solves the equation
\begin{equation}
\label{eq:zFixedPoint}
	z(t) =  -\bdyop(v_1, z)(t) - \imu \cI_t(v_1 z)_R - \imu \cI_t \bdyop(v_1, v_1 z) + g(t).
\end{equation}
We define the fixed point operator $z \mapsto \Phi(z;g)$ by the right-hand side of~\eqref{eq:zFixedPoint}.
We will solve~\eqref{eq:zFixedPoint} by successively constructing fixed points on the intervals $[0,t_n]$. To that purpose, we first set
\begin{align*}
	R_n = 4^{n+1} (1+c(T,\|Y_0\|_{L^2}) T^{\frac{1}{8}})^n \|g\|_{S^1(0,T)}
\end{align*}
for all $n \in \{0, 1, \ldots, L\}$. Now fix such an index $n$ and assume that we have already constructed a fixed point $z_n$ of $\Phi(\cdot; g)$ on the interval $[0, t_n]$, which is unique in $S^1([0,t_n])$ and satisfies $\|z_n\|_{S^1(0,t_n)} \leq R_n$. We then define
\begin{align*}
	B_{n+1} = \{w \in S^1([0, t_{n+1}]) \colon w = z_n \text{ on } [0, t_n], \, \, \|z\|_{S^1(0,t_{n+1})} \leq R_{n+1} \}.
\end{align*}
Here and in the following we use the convention that in the case $n = 0$ the assumptions involving $z_n$ drop. Note that $B_{n+1}$ equipped with the metric induced by $\| \cdot \|_{S^1(0,t_{n+1})}$ is a complete metric space.
We will now show that $\Phi$ is a contractive self-mapping on $B_{n+1}$. To that purpose we first estimate
\begin{align}
	\|\Phi(z;g)\|_{S^1(0,t_{n+1})} &\leq \|\Phi(z;g)\|_{S^1(0,t_n)} + \|\Phi(z;g)\|_{S^1(t_n, t_{n+1})} 
		= \|z\|_{S^1(0,t_n)} + \|\Phi(z;g)\|_{S^1(t_n, t_{n+1})} \label{eq:EstSplittingPhig}
\end{align}
for all $z \in S^1([0,t_{n+1}])$ with $z = \Phi(z;g)$ on $[0,t_n]$.
In order to control $\Phi(z;g)$ on $[t_n, t_{n+1}]$, we first decompose the Duhamel integrals as $\cI_t = \cU_{t;t_n}\cI_{t_n} + \cI_{t;t_n}$. Consequently, we get
\begin{align}
	\|\Phi(z;g)\|_{S^1(t_n, t_{n+1})} &\leq \| \bdyop(v_1,z)\|_{S^1(t_n, t_{n+1})} + \|\cU_{\cdot;t_n} \cI_{t_n}(v_1 z)_R\|_{S^1(t_n, t_{n+1})} + \|\cU_{\cdot;t_n} \cI_{t_n} \bdyop(v_1, v_1 z)\|_{S^1(t_n, t_{n+1})} \nonumber \\
	&\qquad + \|\cI_{\cdot;t_n}(v_1 z)_R\|_{S^1(t_n, t_{n+1})} + \|\cI_{\cdot; t_n} \bdyop(v_1, v_1 z)\|_{S^1(t_n, t_{n+1})} + \|g\|_{S^1(t_n, t_{n+1})}. \label{eq:EstPhigSecInterval}
\end{align}

Applying Lemma~\ref{Lem-Stri-cUcT} to the first, fourth, and fifth term on the above right-hand side, we obtain
\begin{align}
	&\| \bdyop(v_1,z)\|_{S^1(t_n, t_{n+1})}  + \|\cI_{\cdot;t_n}(v_1 z)_R\|_{S^1(t_n, t_{n+1})} + \|\cI_{\cdot; t_n} \bdyop(v_1, v_1 z)\|_{S^1(t_n, t_{n+1})} \nonumber \\
	 &\leq C_0 (K^{-1} + (t_{n+1} - t_n)^{\frac{1}{8}}) \|v_1\|_{\Y(t_n, t_{n+1})} \|z\|_{S^1(t_n, t_{n+1})}  + C_0 (t_{n+1} - t_{n})^{\frac{1}{4}} K \log_2 K \|v_1\|_{\Y(t_n, t_{n+1})} \|z\|_{S^1(t_n, t_{n+1})} \nonumber\\
	&\qquad + C_0 (t_{n+1} - t_{n})^{\frac{1}{8}} \|v_1\|_{\Y(t_n, t_{n+1})}^2 \|z\|_{S^1(t_n, t_{n+1})} \nonumber\\
	&\leq \frac{1}{4} \|z\|_{S^1(t_n, t_{n+1})} + (t_{n+1} - t_n)^{\frac{1}{8}}(C_0 \|Y_0\|_{L^2} + C_0 \|Y_0\|_{L^2}^2 + C_0 \|Y_0\|_{L^2} T^{\frac{1}{8}} K \log_2 K )\|z\|_{S^1(t_n, t_{n+1})} \nonumber\\
	&\leq \frac{1}{4} \|z\|_{S^1(t_n, t_{n+1})} + c(T,\|Y_0\|_{L^2}) (t_{n+1} - t_n)^{\frac{1}{8}} \|z\|_{S^1(t_n, t_{n+1})}
		\leq \frac{1}{2} \|z\|_{S^1(t_n, t_{n+1})}  \label{eq:EstPhig1}
\end{align}
for all $z \in S^1([t_n, t_{n+}])$ by the definition of $K$ and $t_{n+1}$, where we also used~\eqref{eq:DefcTY0} and~\eqref{eq:EstKlogK}. For the second and third term on the right-hand side of~\eqref{eq:EstPhigSecInterval}, we combine Lemma~\ref{lem:LinearEstimates}~\ref{it:LinStrichartz} with Lemma~\ref{Lem-Stri-cUcT} to infer
\begin{align}
	\label{eq:EstPhig2}
	&\|\cU_{\cdot;t_n} \cI_{t_n}(v_1 z)_R\|_{S^1(t_n, t_{n+1})} + \|\cU_{\cdot;t_n} \cI_{t_n} \bdyop(v_1, v_1 z)\|_{S^1(t_n, t_{n+1})}  \nonumber \\
	&\leq C_0 t_n^{\frac{1}{4}} K \log_2 K \|v_1\|_{\Y(0,t_n)} \|z\|_{S^1(0,t_n)} + C_0 t_n^{\frac{1}{8}} \|v_1\|_{\Y(0,t_n)}^2 \|z\|_{S^1(0,t_n)} \nonumber \\
	&\leq  (C_0 \|Y_0\|_{L^2} T^{\frac{1}{8}} K \log_2 K + C_0 \|Y_0\|_{L^2}^2) T^{\frac{1}{8}} \|z\|_{S^1(0,t_n)} 
	\leq c(T,\|Y_0\|_{L^2}) T^{\frac{1}{8}} \|z\|_{S^1(0,t_n)}
\end{align}
for all $z \in S^1([t_n, t_{n+}])$, where we employed~\eqref{eq:DefcTY0} and~\eqref{eq:EstKlogK} again. Combining~\eqref{eq:EstSplittingPhig} to~\eqref{eq:EstPhig2}, we arrive at
\begin{align}
\label{eq:EstPhigFinal}
	\|\Phi(z;g)\|_{S^1(0,t_{n+1})} &\leq \|z\|_{S^1(0,t_n)} + \frac{1}{2} \|z\|_{S^1(t_n, t_{n+1})}+ c(T,\|Y_0\|_{L^2}) T^{\frac{1}{8}} \|z\|_{S^1(0,t_n)} + \|g\|_{S^1(t_n, t_{n+1})}\nonumber \\
						&= (1 + c(T,\|Y_0\|_{L^2}) T^{\frac{1}{8}}) \|z\|_{S^1(0,t_n)} + \|g\|_{S^1(t_n, t_{n+1})} + \frac{1}{2} \|z\|_{S^1(t_n, t_{n+1})}
\end{align}
for all $z \in S^1([0,t_{n+1}])$ with $z = \Phi(z;g)$ on $[0,t_n]$.

In particular, for $z \in B_{n+1}$, where by definition we have $z_n = z = \Phi(z;g)$ on $[0,t_n]$, we infer
\begin{align*}
	\|\Phi(z;g)\|_{S^1(0,t_{n+1})} &\leq (1 + c(T,\|Y_0\|_{L^2}) T^{\frac{1}{8}}) R_n + \|g\|_{S^1(t_n, t_{n+1})} + \frac{1}{2} R_{n+1} \\
	&\leq 4^{n+1} (1 + c(T,\|Y_0\|_{L^2})T^{\frac{1}{8}})^{n+1} \|g\|_{S^1(0,T)} + \|g\|_{S^1(0,T)} + \frac{1}{2} R_{n+1} \leq R_{n+1},
\end{align*}
where we also used that $\|z_n\|_{S^1(0,t_n)} \leq R_n$ by assumption. Since 
\begin{align*}
	\Phi(z; g) - \Phi(\tilde{z};g) = \Phi(z - \tilde{z};0)
\end{align*}
and $\Phi(0; 0) = 0$, estimate~\eqref{eq:EstPhigFinal} also yields
\begin{align}
	\label{eq:EstPhizgContraction}
	\|\Phi(z;g) - \Phi(\tilde{z};g)\|_{S^1(0,t_{n+1})} &= \|\Phi(z - \tilde{z};0)\|_{S^1(0,t_{n+1})} \leq \frac{1}{2} \|z - \tilde{z} \|_{S^1(t_n,t_{n+1})} = \frac{1}{2}  \|z - \tilde{z} \|_{S^1(0,t_{n+1})}
\end{align}
for all $z, \tilde{z} \in S^1([t_n, t_{n+1}])$ with $z = \tilde{z}$ on $[0,t_n]$.

 Consequently, $\Phi(\cdot;g)$ is a contractive self-mapping on the complete metric space $B_{n+1}$ and thus has a unique fixed point $z_{n+1}$ in $B_{n+1}$. By~\eqref{eq:EstPhizgContraction}, this fixed point is unique among the elements of $S^1([t_n, t_{n+1}])$ which satisfy $z = z_n$ on $[0,t_n]$. By the uniqueness property of $z_n$, the fixed point $z_{n+1}$ is thus unique in $S^1([0,t_{n+1}])$. 

Proceeding inductively now, we finally obtain the fixed point $z_{L+1}$, which is the unique solution of~\eqref{eq:SchrTypeNonlocPot} in $S^1([0,T])$ by construction. Moreover, we have $z_{L+1} \in B_{L+1}$ and hence
\begin{align*}
	\|z_{L+1}\|_{S^1(0,T)} &\leq R_{L+1} = 4^{L+2} (1 + c(T, \|Y_0\|_{L^2}) T^{\frac{1}{8}})^{L+1} \|g\|_{S^1(0,T)} \leq C(T, \|Y_0\|_{L^2})  \|g\|_{S^1(0,T)},
\end{align*}
where  $C(T,\|Y_0\|_{L^2})$ is increasing in both its arguments by~\eqref{L-partition-bdd}.
\end{proof}

We are now ready to prove Theorem~\ref{Thm-Noise-Reg}.

{\bf Proof of Theorem \ref{Thm-Noise-Reg}.}
Fix $T \in (0,\infty)$ and set $v_1(t) = e^{\imu t |\nabla|} Y_0$. We fix $K$ as in the proof of Lemma~\ref{Lem-Est-v1Pot}. Lemma~\ref{Lem-Est-v1Pot} then shows that
\begin{equation}
	\label{eq:DefcL}
	\cL \colon S^1(0,T) \rightarrow S^1(0,T), \quad \cL g = z,
\end{equation}
where $z$ denotes the unique solution of~\eqref{eq:SchrTypeNonlocPot} with right-hand side $g$, is well-defined. Since~\eqref{eq:SchrTypeNonlocPot} is linear in $z$, the operator $\cL$ is moreover linear.

We next define
\begin{equation}
	\label{v2z-def}
	 v_2(z) = \imu \int_0^{\cdot} e^{\imu (\cdot -s) |\nabla|} (h|\na||z|^2)(s) \dd s,
\end{equation}
for every $z \in S^1(0,T)$ with $h$ defined in~\eqref{h-W1-def}. Note that $(z, v_2)$ is a solution of system~\eqref{equa-z-Omegav2z-mild} if and only if $z$ satisfies
\begin{align}
	\hspace{-0.5 em} z(t) + \bdyop(v_1, z)(t) + \imu \cI_t(v_1 z)_R + \imu \cI_t \bdyop(v_1, v_1 z) &= \cU_t f - \bdyop(v_2(z), z)(t) - \imu \cI_t(v_2(z) z)_R + \imu \cI_t \bdyop(h |\nabla| |z|^2, z) \nonumber\\
	&\quad  - \imu \cI_t \bdyop(v_2(z), (v_1 + v_2(z)) z) - \imu \cI_t \bdyop(v_1, v_2(z) z), \label{eq:RegNoiseFixedPointz}
\end{align}
with $f = X_0 + \bdyop(Y_0, X_0)$ and $v_2 = v_2(z)$. It is thus sufficient to solve~\eqref{eq:RegNoiseFixedPointz}, the right-hand side of which we denote by $\Psi(z)$, i.e.,
\begin{align}
	\Psi(z) &=  \cU f - \bdyop(v_2(z), z) - \imu \cI(v_2(z) z)_{R}
        + \imu \cI \bdyop(h |\na| |z|^2, z)   - \imu \cI \bdyop(v_2(z), (v_1 + v_2(z)) z) - \imu \cI\bdyop(v_1, v_2(z) z) \notag \\
    &=:  \cU u_0 + \sum\limits_{j=1}^5 \Psi_{j}(z).
\end{align}
Recalling the definition of the operator $\cL$, we see that~\eqref{eq:RegNoiseFixedPointz} is equivalent to the fixed point equation
\begin{equation}
	\label{eq:RegNoiseFixedPointzOperators}
	z = \cL \Psi(z) =: \Phi(z).
\end{equation}

We set $M := 2 C_1 \|X_0\|_{H^1}$, where $C_1 = C_1(T,\|Y_0\|_{L^2})(\geq 1)$ is the constant appearing in~\eqref{eq:EstPhiSelfMap} below.
We will prove that $\Phi$ is a contraction on the ball
\begin{align*}
	B_{S^1(0,\tau)}(M):=\{z\in S^1(0,\tau): \|z\|_{S^1(0,\tau)} \leq  M\}
\end{align*}
for some stopping time $\tau$,
which satisfies $\tau=T$ with high probability if $\phi_1$ is large enough. 
The proof consists of the following three steps.

{\bf Step $1$:} We first show that $\Phi(B_{S^1(0,\tau)}(M)) \subseteq B_{S^1(0,\tau)}(M)$.
For this purpose, we note that
\begin{align}  \label{v2-YT-esti}
   \|v_2(z)\|_{\Y(0,\tau)} \leq C   \|h\|_{L^4(0,\tau)} \|z\|_{S^1(0,\tau)}^2,
\end{align}
by~\eqref{eq:EstQuadraticWave-S1} and that 
\begin{align}  \label{cUX0-esti}
   \|\cU f\|_{S^1(0,\tau)} \leq \|\cU X_0\|_{S^1(0,\tau)} + \|\cU \bdyop(Y_0, X_0)\|_{S^1(0,\tau)} \leq  C \|X_0\|_{H^1} + C  \|Y_0\|_{L^2} \|X_0\|_{H^1}
\end{align}
by Lemma~\ref{lem:LinearEstimates}~\ref{it:LinStrichartz} and Lemma~\ref{lem:Bdy}.
We next estimate each $\Psi_j(z)$ for every $1 \leq j \leq 5$.

By the estimate for the boundary term in~\eqref{eq:EstBdyTime-S1} and~\eqref{v2-YT-esti}, we infer
\begin{align}  \label{Psi1z-esti}
   \|\Psi_1(z)\|_{S^1(0,\tau)}
   &\lesssim (K^{-1} + \tau^\frac 18) \|v_2(z)\|_{\Y(0,\tau)} \|z\|_{S^1(0,\tau)} 
   \lesssim (K^{-1} + \tau^\frac 18) \|h\|_{L^4(0,\tau)}  \|z\|_{S^1(0,\tau)}^3.
\end{align}
Using \eqref{eq:EstQuadraticR-S1},
we also get
\begin{align} \label{Psi2z-esti}
   \| \Psi_2(z)\|_{S^1(0,\tau)}
   \lesssim \tau^\frac 14 K \log_2K \|v_2(z)\|_{\Y(0,\tau)} \|z\|_{S^1(0,\tau)}
   \lesssim \tau^\frac 14 K \log_2K \|h\|_{L^4(0,\tau)} \|z\|_{S^1(0,\tau)}^3,
\end{align}
while~\eqref{eq:EstCubicuuu-S1} gives
\begin{align}  \label{Psi3z-esti}
   \|\Psi_3(z)\|_{S^1(0,\tau)}
   \lesssim  \|h\|_{L^4(0,\tau)} \|z\|_{S^1(0,\tau)}^3.
\end{align}
Moreover, combining~\eqref{eq:EstCubicvvu-S1} with~\eqref{v2-YT-esti} and using $\|v_1\|_{\Y(0,\tau)} = \|Y_0\|_{L^2} $, we derive that
\begin{align}  \label{Psi4z-esti}
   \| \Psi_4(z) \|_{S^1(0,\tau)}
   &\lesssim \tau^\frac 18 \|v_2(z)\|_{\Y(0,\tau)} \|v_1+v_2(z)\|_{\Y(0,\tau)} \|z\|_{S^1(0,\tau)}   \notag \\
   &\lesssim \tau^\frac 18  (\|Y_0\|_{L^2} + \|h\|_{L^4(0,\tau)} \|z\|_{S^1(0,\tau)}^2) \|h\|_{L^4(0,\tau)} \|z\|_{S^1(0,\tau)}^3.
\end{align}
For the last term  $\Psi_5(z)$ we obtain analogously
\begin{align}
   \| \Psi_5(z) \|_{S^1(0,\tau)}
   &\lesssim \tau^\frac 18 \|v_1\|_{\Y(0,\tau)} \|v_2(z)\|_{\Y(0,\tau)} \|z\|_{S^1(0,\tau)}   
   \lesssim \tau^\frac 18 \|Y_0\|_{L^2} \|h\|_{L^4(0,\tau)} \|z\|_{S^1(0,\tau)}^3. \label{Psi5z-esti}
\end{align}
Applying Lemma~\ref{Lem-Est-v1Pot}, combining estimates~\eqref{cUX0-esti} to~\eqref{Psi5z-esti}, recalling the bound~\eqref{eq:EstKlogK} for our choice of $K$,
and using $\|z\|_{S^1(0,\tau)} \leq M$,
we thus conclude
\begin{align}
   \|\Phi(z)\|_{S^1(0,\tau)} &\leq C(\tau, \|Y_0\|_{L^2}) \|\Psi(z)\|_{S^1(0,\tau)} \leq C_1 \|X_0\|_{H^1}
         + C_1 (1+\|h\|_{L^4(0,\tau)}\|z\|_{S^1(0,\tau)}^2) \|h\|_{L^4(0,\tau)}\|z\|_{S^1(0,\tau)}^3 \notag \\
   &\leq C_1 \|X_0\|_{H^1}
         + C_1 M^2 \|h\|_{L^4(0,\tau)} \|z\|_{S^1(0,\tau)}   + C_1 M^4   \|h\|^2_{L^4(0,\tau)} \|z\|_{S^1(0,\tau)}, \label{eq:EstPhiSelfMap}
\end{align}
where $C = C(\cdot,\|Y_0\|_{L^2})$ is the constant from Lemma~\ref{Lem-Est-v1Pot} and $C_1 = C_1(T,\|Y_0\|_{L^2}) \geq 1$ is a constant which is increasing in both its arguments.
Taking the stopping time
\begin{align} \label{tau-def1}
   \tau:=& \inf\bigg\{t\in [0,T]:    C_1(T,\|Y_0\|_{L^2}) M^2
      \|h\|_{L^4(0,t)}> \frac 14 \bigg\} \wedge T,
\end{align}
we then obtain
\begin{align}
   \|\Phi(z)\|_{S^1(0,\tau)}
    \leq   C_1 \|X_0\|_{H^1} + \frac 12 \|z\|_{S^1(0,\tau)}
    \leq \frac{M}{2} + \frac{M}{2} = M.
\end{align}
Consequently, $\Phi$ maps $B_{S^1(0,\tau)}(M)$ into itself.
\medskip

{\bf Step $2$:}
Next we show that $\Phi\colon B_{S^1(0,\tau)}(M) \to B_{S^1(0,\tau)}(M)$ is contractive.
The proof is again
based on the multilinear estimates.
The implicit constants in this step may depend on $T$, $\|X_0\|_{H^1}$ and $\|Y_0\|_{L^2}$.

We take any $\wt z \in B_{S^1(0,\tau)}(M)$ and recall that $v_2(\wt z)$ was defined in~\eqref{v2z-def}. As in \eqref{v2-YT-esti} we have
\begin{align}  \label{wtv2-YT-esti}
   \| v_2(\wt z)\|_{\Y(0,\tau)} \lesssim  \|h\|_{L^4(0,\tau)} \|\wt z\|_{S^1(0,\tau)}^2.
\end{align}
Moreover, \eqref{eq:EstQuadraticWave-S1} and the definition of $B_{S^1(0,\tau)}(M)$ imply
\begin{align} \label{v2z-wtv2z-diff}
   \|v_2(z) - v_2(\wt z)\|_{\Y(0,\tau)}
   &\lesssim \|h\|_{L^4(0,\tau)} (\|z\|_{S^1(0,\tau)} +\|\wt z\|_{S^1(0,\tau)}) \|z-\wt z\|_{S^1(0,\tau)} \lesssim \|h\|_{L^4(0,\tau)}  \|z - \wt z\|_{S^1(0,\tau)}.
\end{align}

In order to estimate the difference between $\Psi(z)$ and $\Psi(\wt z)$ in $S^1(0,\tau)$,
we first note that
by \eqref{eq:EstBdyTime-S1}, \eqref{wtv2-YT-esti} and \eqref{v2z-wtv2z-diff}, we infer
\begin{align}  \label{Psi1z-Pai1wtz}
   \|\Psi_1(z) - \Psi_1(\wt z)\|_{S^1(0,\tau)}
   &= \|\bdyop(v_2(z) -v_2(\wt z), z) + \bdyop(v_2(\wt z), z-\wt z)\|_{S^1(0,\tau)} \notag \\
   &\lesssim (K^{-1}+ \tau^\frac 18) (\|v_2(z)- v_2(\wt z)\|_{\Y(0,\tau)}\|z\|_{S^1(0,\tau)} + \| v_2(\wt z)\|_{\Y(0,\tau)}\|z-\wt z\|_{S^1(0,\tau)}) \notag \\
   &\lesssim \|h\|_{L^4(0,\tau)}\|z-\wt z\|_{S^1(0,\tau)}.
\end{align}
Similarly, using  \eqref{eq:EstQuadraticR-S1} we get
\begin{align} \label{Psi2z-Pai2wtz}
   \|\Psi_2(z) - \Psi_2(\wt z)\|_{S^1(0,\tau)}
   &= \| \cI((v_2(z) - v_2(\wt z)) z)_R + \cI( v_2(\wt z) (z-\wt z))_R\|_{S^1(0,\tau)} \notag \\
   &\lesssim \tau^\frac 14 K \log_2K   (\|v_2(z) - v_2(\wt z)\|_{\Y(0,\tau)}\|z\|_{S^1(0,\tau)} + \|v_2(\wt z)\|_{\Y(0,\tau)}\|z-\wt z\|_{S^1(0,\tau)}) \notag \\
   &\lesssim \|h\|_{L^4(0,\tau)}\|z-\wt z\|_{S^1(0,\tau)},
\end{align}
where we again used the bound~\eqref{eq:EstKlogK} for $K$.
Applying~\eqref{eq:EstCubicuuu-S1}, we also derive
\begin{align}  \label{Psi3z-Pai3wtz}
  \|\Psi_3(z) - \Psi_3(\wt z)\|_{S^1(0,\tau)}
  &\lesssim \|h\|_{L^4(0,\tau)} (\|z\|_{S^1(0,\tau)}^2+ \|\wt z\|_{S^1(0,\tau)}^2) \|z-\wt z\|_{S^1(0,\tau)}  \notag \\
  &\lesssim \|h\|_{L^4(0,\tau)}\|z-\wt z\|_{S^1(0,\tau)}.
\end{align}
Regarding the difference $\Psi_4(z)-\Psi_4(\wt z)$,
we use~\eqref{eq:EstCubicvvu-S1}
and that $\|h\|_{L^4(0,\tau)}\lesssim 1$
to infer
\begin{align}  \label{Psi4z-Pai4wtz}
   \|\Psi_4(z) - \Psi_4(\wt z)\|_{S^1(0,\tau)}
   &= \|\cI \bdyop(v_2(z)- v_2(\wt z), (v_1+v_2(z))z)
         + \cI \bdyop( v_2(\wt z), (v_2(z) - v_2(\wt z))z)  \notag \\
    &\qquad  \qquad  + \cI \bdyop(v_2(\wt z), (v_1+ v_2(\wt z))(z-\wt z))\|_{S^1(0,\tau)}  \notag \\
   &\lesssim \tau^\frac 18 (\|v_2(z)- v_2(\wt z)\|_{\Y(0,\tau)} \|v_1+v_2(z)\|_{\Y(0,\tau)}\|z\|_{S^1(0,\tau)} \notag \\
           &\qquad  \qquad  + \| v_2(\wt z)\|_{\Y(0,\tau)} \|v_2(z) - v_2(\wt z)\|_{\Y(0,\tau)}\|z\|_{S^1(0,\tau)}  \notag \\
     &\qquad   \qquad      + \|v_2(\wt z)\|_{\Y(0,\tau)} \|v_1+  v_2(\wt z)\|_{\Y(0,\tau)}\|z-\wt z\|_{S^1(0,\tau)} )  \notag \\
   &\lesssim \|h\|_{L^4(0,\tau)}\|z-\wt z\|_{S^1(0,\tau)}.
\end{align}
Proceeding analogously, we derive
\begin{align}  \label{Psi5z-Pai5wtz}
     \|\Psi_5(z) - \Psi_5(\wt z)\|_{S^1(0,\tau)}
     &\lesssim \|\cI\bdyop(v_1, (v_2(z) - v_2(\wt z)) z) +\cI\bdyop(v_1, v_2(\wt z)(z - \wt z))\|_{S^1(0,\tau)} \notag \\
     &\lesssim \tau^\frac 18 \|v_1\|_{\Y(0,\tau)}
               (\|v_2(z)-v_2(\wt z)\|_{\Y(0,\tau)}\|z\|_{S^1(0,\tau)} + \|v_2(\wt z)\|_{\Y(0,\tau)}\|z-\wt z\|_{S^1(0,\tau)}) \notag \\
     &\lesssim \|h\|_{L^4(0,\tau)}\|z-\wt z\|_{S^1(0,\tau)}.
\end{align}

Since the operator $\cL$ is linear, Lemma~\ref{Lem-Est-v1Pot} and estimates \eqref{Psi1z-Pai1wtz} to \eqref{Psi5z-Pai5wtz} yield
\begin{align}
    \|\Phi(z) - \Phi(\wt z)\|_{S^1(0,\tau)} &= \|\cL \Psi(z) - \cL \Psi(\wt z)\|_{S^1(0,\tau)} \leq C(\tau, \|Y_0\|_{L^2})  \|\Psi(z) - \Psi(\wt z)\|_{S^1(0,\tau)}
      \notag \\
    &\leq C(T, \|Y_0\|_{L^2}) \sum\limits_{j=1}^5  \|\Psi_j(z) - \Psi_j(\wt z)\|_{S^1(0,\tau)} \leq  C_2 \|h\|_{L^4(0,\tau)}\|z-\wt z\|_{S^1(0,\tau)},
\end{align}
where $C = C(\cdot, \|Y_0\|_{L^2})$ is the constant from Lemma~\ref{Lem-Est-v1Pot} and $C_2 = C_2(T, \|X_0\|_{H^1}, \|Y_0\|_{L^2}) \geq 1$.
We thus define the stopping time
\begin{align} \label{tau-def2}
   \wt \tau:=\inf\left\{t\in [0,T]: C_2(T, \|X_0\|_{H^1}, \|Y_0\|_{L^2}) \|h\|_{L^4(0,t)} \geq \frac 12 \right\} \wedge \tau,
\end{align}
which yields
\begin{align}
   \|\Phi(z) - \Phi(\wt z)\|_{S^1(0, \wt \tau)} &\leq \frac 12 \|z-\wt z\|_{S^1(0,\wt \tau)}.
\end{align}

Consequently, the above two steps show that
$\Phi \colon B_{S^1(0,\wt \tau)}(M) \to B_{S^1(0,\wt \tau)}(M) $ is a contractive self-map.
The Banach fixed point theorem thus gives a unique $z\in B_{S^1(0,\wt \tau)}(M)$ such that
$z=\Phi(z)$.
In view of~\eqref{v2z-def},
we obtain that $(z,v_2(z))$ solves \eqref{equa-z-Omegav2z-mild} on $[0, \wt \tau]$.
The uniqueness of this solution can be proved using standard arguments and the estimates from Step 2.
\medskip

{\bf Step $3$:} In this step we prove that
\begin{align}  \label{tau-T-highprob}
   \bbp (\wt \tau = T) \longrightarrow 1\ \ \text{as} \ \phi_1 \to \infty.
\end{align}
In view of \eqref{tau-def1} and \eqref{tau-def2},
it suffices to prove that
\begin{align}  \label{h-L4-highprob}
    \bbp \(\|h\|_{L^4(0,\infty)}^4 \geq C_3^{-4}(T,\|X_0\|_{H^1}, \|Y_0\|_{L^2})  \)
    \longrightarrow 0,\ \ \text{as}\ \phi_1 \to \infty,
\end{align}
where $C_3(T,\|X_0\|_{H^1}, \|Y_0\|_{L^2})$ is defined by $C_3 = \max\{4 C_1 M^2, 2 C_2\}$ with $C_1 = C_1(T,\|X_0\|_{H^1}, \|Y_0\|_{L^2})$ and $C_2 = C_2(T,\|X_0\|_{H^1}, \|Y_0\|_{L^2})$ from
the construction of $\tau$ and $\wt \tau$ in \eqref{tau-def1} and \eqref{tau-def2}, respectively.

In order to prove \eqref{h-L4-highprob},
we note that \eqref{h-W1-def} yields the identity
\begin{align*}
   \|h\|_{L^4(0,\infty)}^4
   = \int_0^\infty e^{-8\Im \phi_1^{(1)} \beta^{(1)}_1(t) - 8 (\Im \phi_1^{(1)})^2 t} \dd t.
\end{align*}
By virtue of the scaling property of Brownian motion,
i.e.,
$\bbp \circ ( \Im \phi_1^{(1)} \beta^{(1)}_1(\cdot))^{-1}
= \bbp \circ (\beta^{(1)}_1(( \Im \phi_1^{(1)} )^2\cdot))^{-1}$,
we then derive
\begin{align}  \label{PhL4-esti}
   & \bbp (\|h\|_{L^4(0,\infty)}^{4} \geq  C_3(T,\|X_0\|_{H^1}, \|Y_0\|_{L^2})^{-4}   ) \notag \\
   &= \bbp\( \int_0^\infty e^{-8 \beta^{(1)}_1((\Im \phi_1^{(1)} )^2 t) - 8 (\Im \phi_1^{(1)})^2 t} \dd t
            \geq C_3(T,\|X_0\|_{H^1}, \|Y_0\|_{L^2})^{-4}   \) \notag \\
   &= \bbp\( \int_0^\infty e^{-8 \beta^{(1)}_1(t) - 8 t}  \dd t
               \geq (\Im \phi_1^{(1)})^2 C_3(T,\|X_0\|_{H^1}, \|Y_0\|_{L^2})^{-4}  \),
\end{align}
where we also used a change of variables in the last step.

By the law of iterated logarithm for Brownian motion~\eqref{log-BM},
we infer that there exists $t_0=t_0(\omega)$ large enough such that
$- \beta^{(1)}_1(t) - t \leq -\frac 12 t$ for $t \geq t_0$, implying that
 $\int_0^\infty e^{-8 \beta^{(1)}_1(t) - 8 t} dt <\infty $, $\bbp$-a.s.
Since $ C_3(T,\|X_0\|_{H^1}, \|Y_0\|_{L^2})$ is a fixed deterministic constant,
independent of $\phi_1^{(1)}$,
we obtain that the right-hand side of \eqref{PhL4-esti} converges to zero
as $\phi_1^{(1)} \to \infty$, showing~\eqref{h-L4-highprob} and completing the proof.
\hfill $\square$

\section*{Appendix}

\appendix

In this appendix we show that the different notions of solution for the Zakharov system which we use in this paper are all equivalent. More precisely, we prove that the analytically weak, the mild, and the normal form solutions all coincide.

\section{Equivalence between weak and mild solutions}  \label{App-Weak-Mild}

We begin with the equivalence between the weak and the mild solutions.
The proof presented below follows the strategy by Lions and Masmoudi~\cite{LM01}
in the context of the Navier-Stokes equations.

	 \begin{proposition}  \label{Prop-Equiv-Weak-Mild}
	 Let $(u,v)\in C([0,\tau]; H^1) \times C([0,\tau]; L^2)$. If $(u,v)$ is a mild solution of~\eqref{equa-ranZak-bc}, i.e. it satisfies~\eqref{eq:RZakharovDuhamel}, then $(u,v)$ is also an analytically weak solution of \eqref{equa-ranZak-bc}
	 in $H^{-1} \times H^{-1}$,
	 and vice versa.
	 \end{proposition}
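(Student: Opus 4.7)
The plan is to adapt the Lions--Masmoudi scheme~\cite{LM01} via resolvent regularization. Let $J_\epsilon := (I - \epsilon \Delta)^{-1}$, which commutes with both semigroups $e^{\imu t \Delta}$ and $e^{\imu t |\nabla|}$, is uniformly bounded on $H^s$, and converges strongly to the identity on each $H^s$ as $\epsilon \to 0$. Write $F := -\imu(\Re(v) u - b \cdot \nabla u - cu + \Re(\cT_\cdot(W_2)) u)$ and $G := \imu |\nabla||u|^2$ for the Schr\"odinger and wave forcings, respectively. From the hypotheses $(u,v) \in C([0,\tau]; H^1 \times L^2)$, the Sobolev embedding $H^1 \hookrightarrow L^6$ in three dimensions, and the $H^1$-continuity of $b$, $c$, $\cT_\cdot(W_2)$ (cf.~\eqref{b-def}, \eqref{c-def} and Lemma~\ref{Lem-bc-HolderCont}), both $F$ and $G$ belong to $L^1(0,\tau;H^{-1})$, so each formulation is well defined in $H^{-1}$. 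Since the wave equation is linear and treated identically, I focus on the Schr\"odinger component.

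For the direction mild $\Rightarrow$ weak, I would apply $J_\epsilon$ to the Duhamel identity in~\eqref{eq:RZakharovDuhamel} and exploit that $J_\epsilon$ commutes with $e^{\imu t \Delta}$: the mollified function $u_\epsilon := J_\epsilon u$ is then classically differentiable in $L^2$ with
\begin{equation*}
	\partial_t u_\epsilon(t) = \imu \Delta u_\epsilon(t) + J_\epsilon F(t)
\end{equation*}
pointwise in $L^2$. Integrating in $s$, pairing with an arbitrary $\varphi \in H^1$, and sending $\epsilon \to 0$ using $J_\epsilon u \to u$ in $C([0,\tau]; H^1)$ and $J_\epsilon F \to F$ in $L^1(0,\tau; H^{-1})$ delivers the analytically weak formulation in $H^{-1}$.

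The converse direction, weak $\Rightarrow$ mild, is more delicate. Fix $t \in (0,\tau]$ and $\varphi \in H^1$, and introduce the backward-in-time test function $\psi(s) := e^{\imu (t-s) \Delta} \varphi$, which satisfies $\partial_s \psi = -\imu \Delta \psi$. The key identity to establish is the product-rule formula
\begin{equation*}
	\frac{d}{ds}\langle u(s), \psi(s)\rangle = \langle F(s), \psi(s)\rangle,
\end{equation*}
which, once integrated from $0$ to $t$, rearranges to $\langle u(t),\varphi\rangle = \langle e^{\imu t \Delta} u_0,\varphi\rangle + \int_0^t \langle e^{\imu (t-s)\Delta} F(s), \varphi\rangle \dd s$; since $\varphi \in H^1$ is arbitrary, this is precisely the Duhamel identity~\eqref{eq:RZakharovDuhamel}. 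The main obstacle will be to justify this product rule rigorously, since $u$ is only weakly differentiable in $H^{-1}$ while $\psi(s)$ is merely $H^1$-valued, so naively pairing the two derivatives is not permitted. I plan to resolve this by regularizing both factors, replacing $u$ and $\psi$ by $J_\epsilon u$ and $J_\epsilon \psi$: the chain rule then becomes a classical identity in $L^2$ on the regularized level, and the limit $\epsilon \to 0$ follows from the uniform $H^{\pm 1}$-bounds and pointwise convergence of $J_\epsilon$ together with the $H^{\pm 1}$-continuity of $e^{\imu(t-\cdot)\Delta}$ and the $L^1(0,\tau;H^{-1})$-regularity of $F$. The wave component is handled in exactly the same fashion, with $e^{\imu t|\nabla|}$ replacing $e^{\imu t\Delta}$ and $G$ in place of $F$, which completes the plan.
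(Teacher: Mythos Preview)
Your proposal is correct and follows the same Lions--Masmoudi regularization philosophy as the paper, but with two organizational differences worth noting. First, you regularize with the resolvent $J_\epsilon=(I-\epsilon\Delta)^{-1}$, whereas the paper uses spatial convolution mollifiers $\phi_\epsilon$; both commute with the Schr\"odinger and wave groups and converge strongly on each $H^s$, so this is a cosmetic choice. Second, and more interestingly, for the direction weak $\Rightarrow$ mild the paper does not argue via a direct product rule: instead it defines the error $\wt g_\epsilon := u_\epsilon - \big(e^{\imu t\Delta}u_{0,\epsilon} + \int_0^t e^{\imu(t-s)\Delta}F_\epsilon(s)\,ds\big)$ with the nonlinearity itself mollified, shows that $\wt g_\epsilon$ satisfies a linear Schr\"odinger equation with a remainder $r_\epsilon(\phi)\to 0$, and then tests against the backward solution of $-\partial_t\phi+\imu\Delta\phi=F$, $\phi(\tau)=0$, to conclude that $\wt g_\epsilon\rightharpoonup 0$ while simultaneously $\wt g_\epsilon\to u-[\text{mild formula}]$ strongly. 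Your route is more direct: you keep $F$ unmollified, regularize only at the level of the pairing, and recover Duhamel straight from the identity $\frac{d}{ds}\langle J_\epsilon u(s), J_\epsilon\psi(s)\rangle=\langle J_\epsilon F(s), J_\epsilon\psi(s)\rangle$ with $\psi(s)=e^{\imu(t-s)\Delta}\varphi$. Both arguments hinge on the same backward-propagator duality, but yours avoids introducing the auxiliary error function and the separate mollification of the nonlinearity, which makes it a little more streamlined; the paper's version, on the other hand, is closer to the original Lions--Masmoudi template and makes the commutator structure of the remainder more explicit.
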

	
	 \begin{proof}
	 We only prove the statement for the Schr\"odinger component $u$,
	 as the case of the wave component $v$ can be proved analogously.
	
	 (I) We first assume that $u$ is a mild solution of~\eqref{equa-ranZak-bc}, i.e. $u$ satisfies the Schr\"odinger equation in \eqref{eq:RZakharovDuhamel}.
	 We write $u_\ve:= u \ast \phi_\ve$
	 for the standard spatial mollification and use the same notation for the other appearing functions.
	 Since $(u,v)\in C([0,\tau]; H^1) \times C([0,\tau]; L^2)$,
	 we deduce that for any $1\leq q<\infty$,
	 \begin{align}  \label{uve-u-vve-v}
		 u_\ve \longrightarrow u \ \ {\rm in}\ L^q(0,\tau; H^1), \ \
		v_\ve \longrightarrow v \ \ {\rm in}\ L^q(0,\tau; L^2),  \ \
		{\rm as}\ \ve \to 0.
	 \end{align}
As $b \in L^\infty(0,\tau;H^1)$ and $c, \cT_{\cdot}(W_2) \in L^\infty(0,\tau;L^2)$, we obtain analogous convergence results for $b_\epsilon, c_\epsilon$, and $(\cT_{\cdot}(W_2))_\epsilon$. Taking into account the Sobolev embedding $H^1\hookrightarrow L^4$,
we thus infer
	 \begin{align} \label{uve-vve-asmp-H-1}
		  v_\ve u_\ve  \longrightarrow v u,\ \
		  b_\ve \cdot \na u_\ve \longrightarrow b\cdot \na v, \ \
		  c_\ve u_\ve \longrightarrow cu, \ \
         (\mathcal{T}_\cdot(W_2))_\ve u_\ve \longrightarrow \mathcal{T}_\cdot(W_2) u\ \  {\rm in}\ L^q(0,\tau; H^{-1}).
	 \end{align}
	
	 Set
	 \begin{align}
		g_\ve(t)
		:= e^{\imu t\Delta} u_{0,\ve}
		  - \imu \int_0^t e^{\imu (t-s)\Delta} \big(v_\ve u_\ve - b_\ve \cdot \na u_\ve - c_\ve u_\ve + (\mathcal{T}_s(W_2))_\ve u_\ve \big) \dd s
	 \end{align}
	 for all $t\in [0,\tau]$.
	 Then, $g_\ve$ satisfies the equation
	 \begin{align}   \label{equa-hve}
		\imu \partial_t g_\ve + \Delta g_\ve  = v_\ve u_\ve - b_\ve \cdot \na u_\ve - c_\ve u_\ve + (\mathcal{T}_s(W_2))_\ve u_\ve
	 \end{align}
     with $g_\ve(0) = u_{0,\ve}$
	 and~\eqref{uve-vve-asmp-H-1} implies that
	 for any $q\in [1,\infty)$,
	 \begin{align}   \label{hve-u}
		g_\ve \longrightarrow u\  \ {\rm in}\ L^q(0,\tau; H^{-1}), \ \ {\rm as}\ \ve \to 0.
	 \end{align}
	
	Testing~\eqref{equa-hve} with any Schwartz function $\vf \in \mathcal{S}$ and integrating in time, we obtain
	 \begin{align*}
		\<g_\ve(t), \vf\>
		= \<u_{0,\ve}, \vf\>
		  + \int_0^t \<g_\ve, -\imu \Delta \vf\> \dd s
		   + \int_0^t \<- \imu v_\ve u_\ve + \imu b_\ve\cdot \na u_\ve + \imu c_\ve u_\ve - \imu (\mathcal{T}_s(W_2))_\ve u_\ve  , \vf\> \dd s.
	 \end{align*}
	 Using \eqref{uve-vve-asmp-H-1} and \eqref{hve-u} to pass to the limit $\ve\to 0$,
	 we get
	 for $\dd t$-a.e. $t\in[0,\tau]$
	 \begin{align*}
		 {}_{H^{-1}}\<u(t), \vf\>_{H^1}
		&= {}_{H^{-1}}\<u_0, \vf\>_{H^1}
		  + \int_0^t {}_{H^{-1}}\<u, -\imu \Delta \vf\>_{H^1} \dd s  \\
		 &\qquad + \int_0^t {}_{H^{-1}}\<  -  \imu v u  + \imu b\cdot \na u + \imu c u - \imu \mathcal{T}_s(W_2) u, \vf\>_{H^1} \dd s.
	 \end{align*}
	 Taking into account the continuity of both sides in $t$,
	 $ {}_{H^{-1}}\<u, -\imu \Delta \vf\>_{H^1} =  {}_{H^{-1}}\<\imu \Delta u, \vf\>_{H^1}$,
	 and the density of $\mathcal{S}$ in $H^{1}$,
	 we conclude that
	 $u$ is an analytically weak solution of the Schr\"odinger equation in \eqref{equa-ranZak-bc}
     in $H^{-1}$.
	
	 (II)
	 Now we assume that $u\in C([0,\tau]; H^1)$ is an analytically weak solution of
the Schr\"odinger equation in~\eqref{equa-ranZak-bc}.
	 Using this property and mollifying in space again,
	 we get
	 \begin{align}
     \partial_t u_\ve = \imu \Delta u_\ve - \imu \big(v u - b\cdot \na u - c u + \mathcal{T}_{\cdot}(W_2) u  \big)_\ve
	 \end{align}
with $u_\ve(0) = u_{0,\ve}$.
	Consequently,
	 for any $\phi \in C^\infty([0,\tau)\times \mathbb{R}^3)$ we get
	 \begin{align}  \label{uve-rvephi}
		\int_0^\tau \< \partial_t u_\ve - \imu \Delta u_\ve
          + \imu (v_\ve u_\ve - b_\ve\cdot \na u_\ve - c_\ve u_\ve + (\mathcal{T}_t(W_2))_\ve u_\ve  , \phi\> \dd t
		= r_\ve (\phi),
	 \end{align}
	 where
	 \begin{align*}
		r_\ve (\phi)
		&:=  -\int_0^\tau
			\big< \imu ( (vu)_\ve - v_\ve u_\ve) - \imu ((b\cdot \na u)_\ve - b_\ve \cdot \na u_\ve)
		   - \imu ((cu)_\ve - c_\ve u_\ve)  \\
          &\qquad \qquad + \imu ((\mathcal{T}_s(W_2)  u)_\ve - (\mathcal{T}_s(W_2))_\ve u_\ve ), \phi \big>  \dd t.
	 \end{align*}
	 We infer from~\eqref{uve-vve-asmp-H-1} that
	 \begin{align}  \label{rvephi-0}
		r_\ve (\phi) \longrightarrow 0, \ \ \text{as}\ \ve \to 0.
	 \end{align}
	
	 Next, we set
	 \begin{align} \label{hve-def}
		\wt g_\ve (t) :=
		u_\ve (t) - e^{\imu t \Delta} u_{0,\ve}
		 + \int_0^t e^{\imu (t-s)\Delta} ( \imu v_\ve u_\ve - \imu b_\ve \cdot \na u_\ve - \imu c_\ve u_\ve
                       + \imu (\mathcal{T}_s(W_2))_\ve u_\ve  ) \dd s
	 \end{align}
	 for all $t\in [0,\tau]$. Then $\wt  g_\ve$ satisfies the equation
	 \begin{align}   \label{equa-hve*}
		(\partial_t - \imu \Delta) \wt g_\ve
		&= (\partial_t - \imu \Delta) u_\ve
			+ \imu v_\ve u_\ve - \imu b_\ve \cdot \na u_\ve - \imu c_\ve u_\ve
            + \imu (\mathcal{T}_{\cdot}(W_2))_\ve u_\ve
	 \end{align}
with $\wt g_\ve (0) = 0$.
	Combining~\eqref{uve-rvephi} and~\eqref{equa-hve*}, we thus obtain
	 \begin{align}
		\int_0^\tau  \< \partial_t \wt g_\ve - \imu \Delta \wt g_\ve, \phi\> \dd t
		&= \int_0^\tau \<\partial_t u_\ve - \imu \Delta u_\ve  + \imu (v_\ve u_\ve - b_\ve\cdot \na u_\ve
              - c_\ve u_\ve  + (\mathcal{T}_t(W_2))_\ve u_\ve ), \phi\> \dd t \notag \\
		&= r_\ve (\phi),
	 \end{align}
	 so that an integration by parts gives
	 \begin{align}  \label{hve-phi-rve}
		\int_0^\tau \< \wt g_\ve, -\partial_t \phi + \imu \Delta \phi\> \dd t
		= r_\ve(\phi).
	 \end{align}
	
	 For any $F \in  C_c^\infty([0,\tau]\times \bbr^d)$,
	 let $\phi \in C^\infty([0,\tau)\times \bbr^3)$ be the solution to the inhomogeneous Schr\"odinger equation
	 \begin{align*}
	   - \partial_t \phi + \imu \Delta \phi = F \ \ \text{with}\  \phi(\tau) = 0.
	 \end{align*}
	 Hence, we deduce from \eqref{hve-phi-rve} that
	 \begin{align*}
		\int_0^\tau  \< \wt  g_\ve,  F \> \dd t
		= r_\ve(\phi).
	 \end{align*}
	 In view of \eqref{rvephi-0},
	 we infer that the righ-hand side above converges to zero as $\ve \to 0$,
	 and thus $\wt g_\ve$ converges weakly to zero as $\epsilon \to 0$.
	 On the other hand,
	 using~\eqref{uve-u-vve-v} and~\eqref{uve-vve-asmp-H-1} to pass to the limit $\ve\to 0$ in \eqref{hve-def},
	 we also get
	 \begin{align} \label{hve-limit}
		\wt  g_\ve(t)
		\longrightarrow u(t) - e^{\imu t\Delta} u_0
			+ \int_0^t e^{\imu (t-s)\Delta} \big(\imu v u - \imu b \cdot \na u - \imu c u + \imu \mathcal{T}_s(W_2) u \big) \dd s
	 \end{align}
	in $L^q(0,\tau; H^{-1})$ as $\ve\to 0$.
	 Combining \eqref{hve-limit} with the weak convergence of $\wt g_{\epsilon}$ to zero,
	 we conclude that
	 \begin{align}
	   u(t) - e^{\imu t\Delta} u_0
			+ \int_0^t e^{\imu (t-s)\Delta} \big(\imu v u -  \imu b \cdot \na u - \imu  cu + \imu \mathcal{T}_s(W_2) u \big) \dd s
			=0
	 \end{align}
for any $t\in [0,\tau]$,
which shows that $u$ is a mild solution of
the Schr\"odinger equation in~\eqref{eq:RZakharovDuhamel}.
	 \end{proof}

\section{Equivalence between mild and normal form solutions}  \label{App-Mild-Normal}
 	
In this appendix we show that mild solutions of~\eqref{eq:RZakharovDuhamel}
are also solutions of the normal form formulation~\eqref{eq:RZakharovNormalForm} and vice versa.
The precise formulation is contained in Proposition \ref{Prop-Equiv-Mild-Norm} below.
	
	\begin{proposition}  \label{Prop-Equiv-Mild-Norm}
		Let $(u_0, v_0) \in H^1 \times L^2$. Then, the following holds:
		\begin{enumerate}
			\item \label{it:MildNormalForm} Let $(u,v) \in \X(0,\tau) \times \Y(0,\tau)$ be a mild solution of \eqref{eq:RZakharovDuhamel} on $[0,\tau]$ for some $\tau> 0$. Then $(u,v)$ is also a solution of the normal form formulation~\eqref{eq:RZakharovNormalForm} on $[0,\tau]$ for every $K \in 2^\N$ with $K \geq 2^5$.
			\item \label{it:NormalFormMild} There exist $K' \in 2^\N$ and $\tau' > 0$ with the following property: If $K \in 2^\N$ with $K \geq K'$, $\tau \in (0,\tau']$, and $(u,v) \in \X(0,\tau) \times \Y(0,\tau)$ is a solution of~\eqref{eq:RZakharovNormalForm} with parameter $K$ on $[0,\tau]$, then $(u,v)$ is also a mild solution of~\eqref{eq:RZakharovDuhamel} on $[0,\tau]$.
			\item \label{it:NormalFormIndK} Let $K'$ and $\tau'$ be the parameters from $(ii)$. If $(u,v) \in \X(0,\tau) \times \Y(0,\tau)$ satisfies~\eqref{eq:RZakharovNormalForm} with a dyadic number $K_1 \in 2^\N$ with $K_1 \geq K'$ on a time interval $[0,\tau]$ with $\tau \leq \tau'$, then $(u,v)$ also satisfies~\eqref{eq:RZakharovNormalForm} for every $K_2 \in 2^\N$ with $K_2 \geq K'$ on $[0,\tau]$.
		\end{enumerate}
	\end{proposition}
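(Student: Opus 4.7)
The three parts all rest on the distributional identity
\begin{equation*}
(\partial_t - \imu \Delta)\bdyop(v,u) = \imu(vu)_{XL} + \bdyop((\partial_t - \imu |\na|)v, u) + \bdyop(v, (\partial_t - \imu\Delta)u),
\end{equation*}
which is equation~\eqref{equa-Omega-vu}. By the multiplier definition of $\bdyop$ and the multilinear estimates of Section~\ref{Sec-Multi-Esti}, each term on the right-hand side is well-defined as an element of $L^{8/5}(0,\tau;H^{-1})$ for any $(u,v) \in \X(0,\tau)\times\Y(0,\tau)$, so the identity holds in $\mathcal{D}'((0,\tau)\times\R^3)$ and can be verified directly by Fourier manipulations.

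For part~\ref{it:MildNormalForm}, given a mild solution $(u,v)$, both the Schr\"odinger and wave equations hold in $\mathcal{D}'$ (the wave equation by direct differentiation of its Duhamel formula, the Schr\"odinger equation by Proposition~\ref{Prop-Equiv-Weak-Mild}). Substituting the right-hand sides of these PDEs into the identity above yields a closed formula for $(\partial_t - \imu\Delta)\bdyop(v,u)$; integrating via Duhamel on $[0,t]$ produces
\begin{equation*}
\bdyop(v(t),u(t)) - e^{\imu t\Delta}\bdyop(v_0,u_0) = \imu\int_0^t e^{\imu(t-s)\Delta}(vu)_{XL}(s)\,\dd s + \text{(cubic corrections)}.
\end{equation*}
Solving for the $(vu)_{XL}$-integral and inserting it into the mild form~\eqref{eq:RZakharovDuhamel}, after splitting $vu = (vu)_{XL} + (vu)_R$, produces~\eqref{eq:RZakharovNormalForm} exactly. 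This argument is valid for every $K \geq 2^5$.

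For part~\ref{it:NormalFormMild} we reverse the procedure. If $(u,v)$ satisfies~\eqref{eq:RZakharovNormalForm}, differentiating it distributionally and using the identity above to eliminate $(\partial_t-\imu\Delta)\bdyop(v,u)$, together with the wave equation (which is identical in both formulations), yields after rearrangement the algebraic relation
\begin{equation*}
(I + \bdyop(v,\cdot))\bigl[(\partial_t - \imu \Delta)u + \imu vu - \imu(b\cdot\na u + cu - \mathcal{T}_{\cdot}(W_2)u)\bigr] = 0 \quad \text{in } \mathcal{D}'.
\end{equation*}
By Lemma~\ref{lem:Bdy} we have $\|\bdyop(v,\cdot)\|_{\X(0,\tau)\to\X(0,\tau)} \leq C(K^{-1} + \tau^{1/8})\|v\|_{\Y(0,\tau)}$, while Lemma~\ref{lem:QuadraticWave} applied to the $v$-component of~\eqref{eq:RZakharovNormalForm} gives $\|v\|_{\Y(0,\tau)} \leq \|v_0\|_{L^2} + C\tau^{1/4}\|u\|_{\X(0,\tau)}^2$. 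A bootstrap on the $u$-component of~\eqref{eq:RZakharovNormalForm}, analogous to the contraction setup of Section~\ref{Sec-WP-Max-Time}, yields an a~priori bound $\|u\|_{\X(0,\tau)} + \|v\|_{\Y(0,\tau)} \leq C(\|u_0\|_{H^1}, \|v_0\|_{L^2})$ once $K$ is large and $\tau$ is small. Choosing $K' = K'(u_0,v_0)$ and $\tau' = \tau'(u_0,v_0)$ so that $C(K^{-1} + \tau^{1/8})\|v\|_{\Y(0,\tau)} \leq 1/2$ for $K\geq K'$ and $\tau\leq\tau'$, the operator $I + \bdyop(v,\cdot)$ becomes invertible on $\X(0,\tau)$ via Neumann series, forcing the bracketed factor to vanish. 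This is the distributional Schr\"odinger equation, and Proposition~\ref{Prop-Equiv-Weak-Mild} converts it into the mild form~\eqref{eq:RZakharovDuhamel}.

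Part~\ref{it:NormalFormIndK} is then immediate: a solution of~\eqref{eq:RZakharovNormalForm} with $K_1\geq K'$ on $[0,\tau]$ with $\tau\leq\tau'$ is mild by~\ref{it:NormalFormMild}, and~\ref{it:MildNormalForm} re-expresses it as a solution of~\eqref{eq:RZakharovNormalForm} for every $K_2 \geq 2^5$, in particular for every $K_2 \geq K'$. The main technical obstacle is the bootstrap a~priori bound in part~\ref{it:NormalFormMild}: without uniform control of $\|v\|_{\Y(0,\tau)}$ in terms of the initial data, the operator $I + \bdyop(v,\cdot)$ cannot be inverted, and this is the sole reason why $K'$ and $\tau'$ must be chosen depending on $(u_0, v_0)$.
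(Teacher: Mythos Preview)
Your algebraic observation in part~\ref{it:NormalFormMild}---that the defect $D := (\partial_t - \imu\Delta)u + \imu vu - \imu(b\cdot\nabla u + cu - \mathcal{T}_\cdot(W_2)u)$ should satisfy $(I + \bdyop(v,\cdot))D = 0$---is exactly the structural fact underlying the equivalence. The gap is in the execution. You invert $I + \bdyop(v,\cdot)$ on $\X(0,\tau)$, but $D$ is not an element of $\X(0,\tau)$: it contains $\partial_t u$, which for a normal-form solution is a priori only a time-distribution. Worse, the derivation of $(I+\bdyop(v,\cdot))D=0$ itself relies on identity~\eqref{equa-Omega-vu}, whose right-hand side contains $\bdyop(v,(\partial_t - \imu\Delta)u)$; to even make sense of this term and to justify the Leibniz rule for the bilinear multiplier $\bdyop$ you already need $\partial_t u$ to lie in some function space---which is precisely the conclusion you are after. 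This circularity cannot be broken purely at the distributional level.

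The paper avoids the problem by never differentiating $u$. It mollifies in space to obtain $u_\epsilon,v_\epsilon\in C([0,\tau];H^k)$, builds both the mild Duhamel $U_\epsilon$ and the normal-form Duhamel $\wt U_\epsilon$ with these smoothed inputs (so that the integration by parts~\eqref{II-normform} is classical), and then estimates the difference $\wt U_\epsilon - U_\epsilon$ directly at the integral level in the auxiliary space $S(0,\tau)=L^\infty_t L^2_x \cap L^2_t \dot B^0_{6,2}$; the absorption of the $\bdyop$-type term (your Neumann-series step) happens there, with $K'$ and $\tau'$ chosen so that the coefficient is at most $1/2$, and the limit $\epsilon\to 0$ is taken only after the inequality is closed. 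Your part~\ref{it:MildNormalForm} is essentially sound because there the mild equation (via Proposition~\ref{Prop-Equiv-Weak-Mild}) already supplies $(\partial_t-\imu\Delta)u$ as an $L^\infty_t H^{-1}_x$ function, so the Leibniz rule can be justified by a routine time-mollification; and part~\ref{it:NormalFormIndK} follows from the other two exactly as you say.
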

	
	\begin{proof}
		\ref{it:MildNormalForm} Fix $K \in 2^\N$ with $K \geq 2^5$. Let $(u,v)$ be a mild solution of \eqref{eq:RZakharovDuhamel} on $[0,\tau]$,
where the equations are understood in $H^{-1} \times H^{-1}$.
Let $u_{0,\ve}, v_{0,\ve}, u_\epsilon, v_\epsilon, b_\epsilon, c_\epsilon, (\cT_\cdot(W_2))_\ve$ denote the mollification of the corresponding functions as in the proof of Proposition \ref{Prop-Equiv-Weak-Mild}. We note that $u_\epsilon, v_\epsilon, b_\epsilon, c_\epsilon, (\cT_\cdot(W_2))_\ve  \in C([0,\tau], H^k)$ for all $k \in \N$ and $\epsilon > 0$.
Then the convergence results in~\eqref{uve-u-vve-v} and \eqref{uve-vve-asmp-H-1} are still valid.

We now define $U_\epsilon$ and $V_\epsilon$ by
		\begin{align}
			U_\epsilon(t) &:= e^{\imu t \Delta} u_{0,\epsilon}
           - \imu  \int_0^t e^{\imu (t-s) \Delta}(v_\epsilon u_\epsilon
             - b_\epsilon \cdot \nabla u_\epsilon - c_\epsilon u_\epsilon + (\cT_\cdot(W_2))_\ve u_\ve)(s) \dd s, \label{eq:DefUeps}\\
			V_\epsilon(t) &:= e^{\imu t |\nabla|} v_{0,\epsilon} + \imu \int_0^t e^{\imu (t-s) |\nabla|} |\nabla| |u_\epsilon|^2(s) \dd s \label{eq:DefVeps}
		\end{align}
		for all $t \in [0,\tau]$.
Then, by \eqref{uve-u-vve-v} and \eqref{uve-vve-asmp-H-1},
for any $q\in [1,2]$,
		\begin{align}
			U_\epsilon &\longrightarrow e^{\imu (\cdot) \Delta} u_{0}
            - \imu  \int_0^\cdot e^{\imu (\cdot-s) \Delta}(v u - b \cdot \nabla u - c u + \cT_\cdot(W_2)u)(s) \dd s = u \quad \text{in } L^q(0,\tau; H^{-1}),
            \label{Uve-limit}  \\
			V_\epsilon  &\longrightarrow e^{\imu (\cdot) |\nabla|} v_{0}
                + \imu \int_0^\cdot e^{\imu (\cdot-s) |\nabla|} |\nabla| |u|^2(s) \dd s = v \quad \text{in } L^q(0,\tau; L^2).   \label{Vve-limit}
		\end{align}
		as $\ve \to 0$, where we also used that $u \in \langle \nabla \rangle^{-1} L^2(0,\tau;\dot{B}^0_{6,2})$ since $u \in \X(0,\tau)$.
		
		Note that $U_\epsilon, V_\epsilon \in C^1([0,\tau],H^k)$ for all $k \in \N$. Moreover, we have as in~\eqref{pt-wtu-wtv},
\begin{equation}  \label{equa-Uve-Vve}
	\left\{\aligned
	  \partial_t(e^{\imu t |\xi|^2} \hat{U}_\epsilon(t,\xi)) &= -\imu e^{\imu t |\xi|^2}\cF(v_\epsilon u_\epsilon)(t,\xi) +
           \imu e^{\imu t |\xi|^2}\cF(b_\epsilon \cdot \nabla u_\epsilon + c_\epsilon u_\epsilon - (\cT_s(W_2))_\ve u_\ve)(t,\xi), \\
    	\partial_t(e^{-\imu t |\xi|} \hat{V}_\epsilon(t,\xi)) &= \imu e^{-\imu t |\xi|} |\xi| \cF(|u_\epsilon|^2)(t,\xi).
	\endaligned
	\right.
\end{equation}
        We write
        \begin{align}
        	\hat{U}_\epsilon(t)
        &= e^{-\imu t |\xi|^2} \hat{u}_{0,\epsilon} - \imu \int_0^t e^{-\imu (t-s)|\xi|^2} \cF(v_\epsilon u_\epsilon)_{XL}(s) \dd s - \imu \int_0^t e^{-\imu (t-s)|\xi|^2} \cF(v_\epsilon u_\epsilon)_{R}(s) \dd s \nonumber\\
 		&\qquad  + \imu \int_0^t e^{-\imu (t-s)|\xi|^2}\cF(b_\epsilon \cdot \nabla u_\epsilon + c_\epsilon u_\epsilon - (\cT_s(W_2))_\ve u_\ve)(s) \dd s \nonumber\\
 			&= e^{-\imu t |\xi|^2} \hat{u}_{0,\epsilon}
              - \imu \int_0^t e^{-\imu (t-s)|\xi|^2} \cF(V_\epsilon U_\epsilon)_{XL}(s) \dd s
 		   + \imu \int_0^t e^{-\imu (t-s)|\xi|^2} \cF(V_\epsilon U_\epsilon - v_\epsilon u_\epsilon)_{XL}(s) \dd s \notag \\
           &\qquad - \imu \int_0^t e^{-\imu (t-s)|\xi|^2} (v_\epsilon u_\epsilon)_{R}(s) \dd s
              + \imu \int_0^t e^{-\imu (t-s)|\xi|^2}\cF(b_\epsilon \cdot \nabla u_\epsilon + c_\epsilon u_\epsilon -(\cT_s(W_2))_\ve u_\ve )(s) \dd s \label{eq:FTUeps}
        \end{align}
        Because of the regularity of $U_\epsilon$ and $V_\epsilon$,
        the integration-by-parts argument in~\eqref{II-normform}
        is rigorously justified here and we obtain
        \begin{align}  \label{intUV-ve}
        	 & - \imu \int_0^t e^{-\imu (t-s)|\xi|^2} \cF(V_\epsilon U_\epsilon)_{XL}(s,\xi) \dd s  \notag \\
        	&=  - \cF \bdyop(V_\epsilon,U_\epsilon)(t,\xi) + e^{-\imu t |\xi|^2} \cF\bdyop(v_{0,\epsilon},u_{0,\epsilon})(\xi)
 				  + \imu \cF \Big(\int_0^t e^{\imu (t-s)\Delta} \bdyop(|\nabla| |u_\epsilon|^2, U_\epsilon)(s) \dd s\Big)(\xi)  \\
 			&\qquad + \imu \cF\Big(\int_0^t e^{\imu(t-s)\Delta} \bdyop(V_\epsilon, - v_\epsilon u_\epsilon + b_\epsilon \cdot \nabla u_\epsilon
              + c_\epsilon u_\epsilon - (\cT_\cdot(W_2))_\ve u_\ve )(s) \dd s \Big)(\xi). \notag
        \end{align}
        Inserting this formula back into~\eqref{eq:FTUeps}, we thus arrive at
        \begin{align}  \label{Uve-normal}
        	U_\epsilon(t) &= e^{\imu t \Delta} u_{0,\epsilon} -\bdyop(V_\epsilon,U_\epsilon)(t) + e^{\imu t \Delta} \bdyop(v_{0,\epsilon},u_{0,\epsilon}) + \imu \int_0^t e^{\imu (t-s)\Delta} (V_\epsilon U_\epsilon - v_\epsilon u_\epsilon)_{XL}(s) \dd s  \notag  \\
         &\qquad - \imu \int_0^t e^{\imu(t-s)\Delta}(v_\epsilon u_\epsilon)_R(s) \dd s
          + \imu \int_0^t e^{\imu (t-s)\Delta}(b_\epsilon \cdot \nabla u_\epsilon + c_\epsilon u_\epsilon - (\cT_s(W_2))_\ve u_\ve )(s) \dd s  \notag  \\
         &\qquad + \imu \int_0^t e^{\imu (t-s)\Delta} \bdyop(|\nabla| |u_\epsilon|^2, U_\epsilon)(s) \dd s  \notag \\
         &\qquad + \imu \int_0^t e^{\imu(t-s)\Delta} \bdyop(V_\epsilon, - v_\ve u_\ve + b_\epsilon \cdot \nabla u_\epsilon
                + c_\epsilon u_\epsilon - (\cT_\cdot(W_2))_\ve u_\ve )(s) \dd s.
        \end{align}

In order to pass to the limit on the right-hand side above,
arguing as in~\eqref{eq:EstBdyOpInitial} we derive
the boundary estimate in $H^{-1}$
        \begin{align}
	&\| \bdyop(w_1, w_2)\|_{H^{-1}} \lesssim \Big(\sum_{N \in 2^\Z} \|\langle \nabla \rangle^{-1} \bdyop(P_N w_1, P_{\leq K^{-1} N} w_2)\|_{L^2}^2 \Big)^{\frac{1}{2}} \nonumber\\
	&\lesssim \Big(\sum_{N \in 2^\Z} \Big(\sum_{N_1 \leq K^{-1} N} \langle N \rangle^{-2} N^{-1}
         \||\nabla| \langle \nabla \rangle \bdyop(P_N w_1, P_{N_1}w_2)\|_{L^2}\Big)^2 \Big)^{\frac{1}{2}} \nonumber\\
	&\lesssim \Big(\sum_{N \in 2^\Z} \Big(\sum_{N_1 \leq K^{-1} N} \langle N \rangle^{-2} N^{-1} \| P_N w_1 \|_{L^2} \|P_{N_1} w_2\|_{L^\infty}\Big)^2 \Big)^{\frac{1}{2}} \nonumber\\
	&\lesssim \|w_1\|_{H^{-\frac{1}{2}}} \Big(\sum_{N \in 2^\Z} \Big(\sum_{N_1 \leq K^{-1} N} \langle N \rangle^{-\frac{3}{2}} N^{-1}N_1^{\frac{3}{2}} \langle N_1 \rangle  \|\langle N_1 \rangle^{-1} P_{N_1} w_2 \|_{L^2}\Big)^2 \Big)^{\frac{1}{2}} \nonumber\\
	& \lesssim \|w_1\|_{H^{-\frac{1}{2}}} \Big(\sum_{N \in 2^\Z} \Big(\sum_{N_1 \leq K^{-1} N} N^{-1} N_1 \|\langle N_1 \rangle^{-1} P_{N_1} w_2\|_{L^2} \Big)^2 \Big)^{\frac{1}{2}} \lesssim K^{-1} \|w_1\|_{H^{-\frac{1}{2}}} \|w_2\|_{H^{-1}}. \nonumber
\end{align}

Consequently, employing \eqref{uve-u-vve-v}, \eqref{uve-vve-asmp-H-1}, \eqref{Uve-limit}, and \eqref{Vve-limit},
we first obtain $(V_\ve U_\ve - v_\ve u_\ve)_{XL} \to 0$ in $L^q(0,\tau; H^{-3})$ for any $q \in [1,2]$ and then
\begin{align*}
	U_\epsilon(t)
         &\longrightarrow e^{\imu t \Delta} u_{0} -\bdyop(v,u)(t)
             + e^{\imu t \Delta} \bdyop(v_{0},u_{0})
             - \imu \int_0^t e^{\imu(t-s)\Delta}(v u)_R(s) \dd s  \\
         &\qquad  + \imu \int_0^t e^{\imu (t-s)\Delta}(b \cdot \nabla u + c u - \cT_\cdot(W_2)u)(s) \dd s
                  + \imu \int_0^t e^{\imu (t-s)\Delta} \bdyop(|\nabla| |u|^2, u)(s) \dd s  \\
         &\qquad + \imu \int_0^t e^{\imu(t-s)\Delta} \bdyop(v, - vu+ b \cdot \nabla u + c u - \cT_\cdot(W_2)u )(s) \dd s
\end{align*}
in $L^q(0,\tau; H^{-3})$ for any $q\in [1,2]$
as $\epsilon \rightarrow 0$.
As $U_\epsilon$ also converges to $u$ in $L^q(0,\tau;H^{-1})$ by~\eqref{Uve-limit}, we conclude
\begin{align*}
	u(t) &=  e^{\imu t \Delta} u_{0} -\bdyop(v,u)(t) + e^{\imu t \Delta} \bdyop(v_{0},u_{0}) - \imu \int_0^t e^{\imu(t-s)\Delta}(v u)_R(s) \dd s \\
         &\qquad  + \imu \int_0^t e^{\imu (t-s)\Delta}(b \cdot \nabla u + c u - \cT_\cdot(W_2) u)(s) \dd s
           + \imu \int_0^t e^{\imu (t-s)\Delta} \bdyop(|\nabla| |u|^2, u)(s) \dd s   \\
         &\qquad   + \imu \int_0^t e^{\imu(t-s)\Delta} \bdyop(v, - vu+ b \cdot \nabla u + c u - \cT_\cdot(W_2) u )(s) \dd s,
\end{align*}
i.e. $(u,v)$ solves the normal form formulation~\eqref{eq:RZakharovNormalForm}.

\ref{it:NormalFormMild}
Let $(u,v) \in \X(0,\tau) \times \Y(0,\tau)$ be a solution of the normal form~\eqref{eq:RZakharovNormalForm}.
As in part~\ref{it:MildNormalForm},
we apply a mollifier in space to obtain smooth functions
$u_\epsilon, v_\epsilon, b_\epsilon, c_\epsilon, (\cT_\cdot(W_2))_\ve \in C([0,\tau] \times H^k(\R^3))$ for all $k \in \N$,
and the convergence results in~\eqref{uve-u-vve-v} and \eqref{uve-vve-asmp-H-1} are still valid.
Let $u_{0,\epsilon}, v_{0,\epsilon} \in H^k$,  $k \in \N$,
 be the mollification of $u_0$ and $v_0$.
		
We keep using the notations $U_\ve$ and $V_\ve$ as in part~\ref{it:MildNormalForm} for the approximations of the mild solutions, i.e. $U_\epsilon$ and $V_\epsilon$ are defined by~\eqref{eq:DefUeps} and~\eqref{eq:DefVeps}.
Moreover, we denote by $\wt U_\ve$ the approximation of the normal form solution
		\begin{align*}
			\wt U_\epsilon(t) &:= e^{\imu t \Delta} u_{0,\epsilon} -\bdyop(v_\epsilon,u_\epsilon)(t) + e^{\imu t \Delta} \bdyop(v_{0,\epsilon},u_{0,\epsilon}) - \imu \int_0^t e^{\imu (t-s)\Delta} (v_\epsilon u_\epsilon)_{R}(s) \dd s \\
         &\qquad + \imu \int_0^t e^{\imu (t-s)\Delta}(b_\epsilon \cdot \nabla u_\epsilon + c_\epsilon u_\epsilon - (\cT_\cdot(W_2))_\ve u_\ve )(s) \dd s
             + \imu \int_0^t e^{\imu (t-s)\Delta} \bdyop(|\nabla| |u_\epsilon|^2, u_\epsilon)(s) \dd s\\
         &\qquad  + \imu \int_0^t e^{\imu(t-s)\Delta} \bdyop(v_\epsilon, - v_\ve u_\ve
                  + b_\epsilon \cdot \nabla u_\epsilon + c_\epsilon u_\epsilon
                  - (\cT_\cdot(W_2))_\ve u_\ve )(s) \dd s.
		\end{align*}
		
		We write
		\begin{align}
		\label{eq:NormalToMildSplitting}
			&\imu \int_0^t e^{\imu (t-s)\Delta} \bdyop(|\nabla| |u_\epsilon|^2, u_\epsilon)(s) \dd s
              + \imu \int_0^t e^{\imu(t-s)\Delta} \bdyop(v_\epsilon, -v_\epsilon u_\epsilon
                + b_\epsilon \cdot \nabla u_\epsilon + c_\epsilon u_\epsilon - (\cT_\cdot(W_2))_\ve u_\ve )(s)   \dd s \nonumber\\
			&= \imu \int_0^t e^{\imu (t-s)\Delta} \bdyop(|\nabla| |u_\epsilon|^2, {U}_\epsilon)(s) \dd s
             + \imu \int_0^t e^{\imu(t-s)\Delta} \bdyop(V_\epsilon, - v_\epsilon u_\epsilon
               + b_\epsilon \cdot \nabla u_\epsilon + c_\epsilon u_\epsilon - (\cT_\cdot(W_2))_\ve u_\ve )(s) \dd s \nonumber\\
			&\quad + \imu \int_0^t e^{\imu (t-s)\Delta} \bdyop(|\nabla| |u_\epsilon|^2, u_\epsilon -  {U}_\epsilon)(s) \dd s \nonumber\\
			&\quad + \imu \int_0^t e^{\imu(t-s)\Delta} \bdyop(v_\epsilon - V_\epsilon, - v_\epsilon u_\epsilon
            + b_\epsilon \cdot \nabla u_\epsilon + c_\epsilon u_\epsilon - (\cT_\cdot(W_2))_\ve u_\ve)(s) \dd s \notag \\
            &=: I + II + III + IV.
		\end{align}
        Note that, as in part~\ref{it:MildNormalForm}, equation~\eqref{equa-Uve-Vve} is still valid.
		Hence, by \eqref{intUV-ve} we have
		\begin{align*}
			\cF(I + II)
             = \cF \bdyop(V_\epsilon, {U}_\epsilon)(t)
                 - \cF(e^{\imu t \Delta} \bdyop(v_{0,\epsilon}, u_{0,\epsilon}))
                 - \imu \cF \int_0^t e^{\imu (t-s) \Delta} (V_\epsilon {U}_\epsilon)_{XL}(s) \dd s.
		\end{align*}
		Inserting this identity together with~\eqref{eq:NormalToMildSplitting} into the definition of $\wt U_\epsilon$, we obtain
		\begin{align*}
			\wt U_\epsilon(t)
          &= e^{\imu t \Delta} u_{0,\epsilon} - (\bdyop(v_\epsilon,u_\epsilon)(t)
         - \bdyop(V_\epsilon, {U}_\epsilon)(t)) - \imu \int_0^t e^{\imu (t-s)\Delta} (v_\epsilon u_\epsilon)(s) \dd s \\
         &\qquad + \imu \int_0^t e^{\imu (t-s) \Delta} (v_\epsilon u_\epsilon - V_\epsilon {U}_\epsilon)_{XL} \dd s
         + \imu \int_0^t e^{\imu (t-s)\Delta}(b_\epsilon \cdot \nabla u_\epsilon + c_\epsilon u_\epsilon - (\cT_\cdot(W_2))_\ve u_\ve )(s) \dd s \\
         &\qquad + \imu \int_0^t e^{\imu (t-s)\Delta} \bdyop(|\nabla| |u_\epsilon|^2, u_\epsilon - {U}_\epsilon)(s) \dd s\\
         &\qquad + \imu \int_0^t e^{\imu(t-s)\Delta} \bdyop(v_\epsilon - V_\epsilon, - v_\epsilon u_\epsilon
            + b_\epsilon \cdot \nabla u_\epsilon + c_\epsilon u_\epsilon - (\cT_\cdot(W_2))_\ve u_\ve)(s) \dd s.
		\end{align*}
		We thus get
		\begin{align}
		\label{eq:DifferenceUeps}
			\wt U_\epsilon(t) - {U}_\epsilon(t)
            &=  - (\bdyop(v_\epsilon,u_\epsilon)(t) - \bdyop(V_\epsilon, {U}_\epsilon)(t))
             + \imu \int_0^t e^{\imu (t-s) \Delta} (v_\epsilon u_\epsilon - V_\epsilon {U}_\epsilon)_{XL} \dd s  \nonumber\\
         &\qquad + \imu \int_0^t e^{\imu (t-s)\Delta} \bdyop(|\nabla| |u_\epsilon|^2, u_\epsilon - {U}_\epsilon)(s) \dd s \nonumber\\
         &\qquad + \imu \int_0^t e^{\imu(t-s)\Delta} \bdyop(v_\epsilon - V_\epsilon, - v_\epsilon u_\epsilon
          + b_\epsilon \cdot \nabla u_\epsilon + c_\epsilon u_\epsilon - (\cT_\cdot(W_2))_\ve u_\ve )(s) \dd s.
		\end{align}

		 Below we will fix $K'$ and $\tau'$ in such a way that $K' \geq \tilde{K}$ and $\tau' \leq \tilde{\tau} \leq 1$, where $\tilde{K}$
         and $\tilde{\tau}$ are the dyadic integer and the time step constructed in Step~1 and Step~2 of the proof of Theorem~\ref{Thm-LWP}.
         Consequently, $(u,v)$ is the unique fixed point of the fixed point operator $\Phi$ from the proof of
         Theorem~\ref{Thm-LWP} and is an element of the ball $B_{\X(0,\tau) \times \Y(0,\tau)}(M)$ constructed there. In particular,
         there is a radius $M$ (depending only on $\|u_0\|_{H^1}$ and $\|v_0\|_{L^2}$) such that $\|u\|_{\X(0,\tau)} + \|v\|_{\Y(0,\tau)} \leq M$
         on $[0,\tau']$. We thus also have
         $\|u_\epsilon\|_{\X(0,\tau)} + \|v_\epsilon\|_{\Y(0,\tau)} \leq M$
         and by Lemma~\ref{lem:QuadraticWave} $\|V_\epsilon\|_{\Y(0,\tau)} \lesssim_{M} 1$ on $[0,\tau']$.
		
		We want to estimate the difference in~\eqref{eq:DifferenceUeps} in the space
		\begin{align*}
		 S(0,\tau) := L^\infty(0,\tau; L^2) \cap L^2(0,\tau; \dot{B}^0_{6,2}).
		\end{align*}
		To that purpose, we first note that adapting the proofs of Lemma~\ref{lem:Bdy},
estimates~\eqref{eq:EstQuadraticR} and \eqref{eq:EstCubicuuu}, we obtain in the same way the estimates
		\begin{align*}
			&\|\bdyop(w_1,w_2)\|_{S(0,\tau)} \lesssim (K^{-1} + \tau^{\frac{1}{8}}) \|w_1\|_{\Y(0,\tau)} \|w_2\|_{S(0,\tau)}, \\
			&\left\|\int_{0}^\cdot e^{\imu (\cdot-s) \Delta} (w_1 w_2)_{XL}(s) \dd s \right\|_{S(0,\tau)}
               \lesssim \tau^{\frac{1}{4}} \|w_1\|_{\Y(0,\tau)} \|w_2\|_{S(0,\tau)}, \\
			&\left\|\int_{0}^\cdot e^{\imu (\cdot-s) \Delta} \bdyop(|\nabla||w_1|^2, w_2)(s) \dd s \right\|_{S(0,\tau)}
             \lesssim \tau^\frac 58 \|w_1\|^2_{L^\infty(I; H^1_x)} \|w_2\|_{S(0,\tau)}.
		\end{align*}
		We thus infer
		\begin{align*}
			&\|\bdyop(v_\epsilon,u_\epsilon) - \bdyop(V_\epsilon, {U}_\epsilon) \|_{S(0,\tau)} \\
			&\lesssim \|\bdyop(v_\epsilon - V_\epsilon, u_\epsilon)\|_{S(0,\tau)}
               + \| \bdyop(V_\epsilon, u_\epsilon - \wt U_\epsilon) \|_{S(0,\tau)}
               + \|\bdyop(V_\epsilon, \wt U_\epsilon - {U}_\epsilon)\|_{S(0,\tau)} \\
			&\lesssim_{M} \|v_\epsilon - V_\epsilon\|_{\Y(0,\tau)}
                + \|u_\epsilon - \wt U_\epsilon\|_{S(0,\tau)}
                + (K^{-1} + \tau^{\frac{1}{8}}) \|\wt U_\epsilon - {U}_\epsilon\|_{S(0,\tau)},
		\end{align*}
		exploiting that $\|u_\epsilon\|_{\X(0,\tau)} + \|V_\epsilon\|_{\Y(0,\tau)} \lesssim_{M} 1$. Analogously, we deduce
		\begin{align*}
			&\left\|  \int_0^\cdot e^{\imu (\cdot-s) \Delta} (v_\epsilon u_\epsilon - V_\epsilon {U}_\epsilon)_{XL} \dd s \right\|_{S(0,\tau)} \\
			&\lesssim \tau^{\frac{1}{4}} (\|v_\epsilon - V_\epsilon\|_{\Y(0,\tau)} \| u_\epsilon\|_{S(0,\tau)}
                + \|V_\epsilon\|_{\Y(0,\tau)} \|u_\epsilon - \wt U_\epsilon\|_{S(0,\tau)}
                + \|V_\epsilon\|_{\Y(0,\tau)} \|\wt U_\epsilon - {U}_\epsilon\|_{S(0,\tau)}) \\
			&\lesssim_{M} \|v_\epsilon - V_\epsilon\|_{\Y(0,\tau)}
               + \|u_\epsilon - \wt U_\epsilon\|_{S(0,\tau)}
               + \tau^{\frac{1}{4}} \|\wt U_\epsilon - {U}_\epsilon\|_{S(0,\tau)},
		\end{align*}
		and
		\begin{align*}
			 \left\| \int_0^\cdot e^{\imu (\cdot-s)\Delta}
          \bdyop(|\nabla| |u_\epsilon|^2, u_\epsilon - {U}_\epsilon)(s) \dd s \right\|_{S(0,\tau)}
          &\lesssim \tau^\frac 58 \|u_\epsilon\|^2_{L^\infty_t H^1_x} \|u_\epsilon - {U}_\epsilon\|_{S(0,\tau)} \\
			&\lesssim_{M} \|u_\epsilon - \wt U_\epsilon\|_{S(0,\tau)} + \tau^{\frac{5}{8}} \|\wt U_\epsilon - {U}_\epsilon\|_{S(0,\tau)}.
		\end{align*}
		For the last term in~\eqref{eq:DifferenceUeps} we apply Lemma~\ref{lem:Cubic} to obtain
		\begin{align*}
			&\left\| \int_0^\cdot e^{\imu(\cdot-s)\Delta} \bdyop(v_\epsilon - V_\epsilon, v_\epsilon u_\epsilon
             - b_\epsilon \cdot \nabla u_\epsilon - c_\epsilon u_\epsilon
              - (\cT_\cdot(W_2))_\ve u_\ve )(s) \dd s \right\|_{S(0,\tau)} \\
			&\lesssim \tau^{\frac{1}{8}} \|v_\epsilon - V_\epsilon\|_{\Y(0,\tau)}
              (\|v_\epsilon\|_{\Y(0,\tau)}  + \|b_\epsilon\|_{L^\infty(0,\tau; H^1_x)}
                 + \|c_\epsilon\|_{\Y(0,\tau)}
                 +  \|(\cT_\cdot(W_2))_\ve\|_{\Y(0,\tau)})\|u_\epsilon\|_{\X(0,\tau)}   \\
			&\lesssim_{M} \|v_\epsilon - V_\epsilon\|_{\Y(0,\tau)},
		\end{align*}
		where we also used
		\begin{align*}
			\|b_\epsilon\|_{L^\infty(0,\tau; H^1_x)}
           + \|c_\epsilon\|_{\Y(0,\tau)}
           + \|(\cT(W_2))_\ve\|_{\Y(0,\tau)}
           \lesssim \|b\|_{L^\infty(0,\tau; H^1_x)} + \|c\|_{\Y(0,\tau)} + \|\cT(W_2)\|_{\Y(0,\tau)} \lesssim 1.
		\end{align*}
		Combining these estimates with~\eqref{eq:DifferenceUeps}, we arrive at
		\begin{align*}
			\|\wt U_\epsilon - {U}_\epsilon\|_{S(0,\tau)}
        \leq C_0 (K^{-1} + \tau^{\frac{1}{8}}) \|\wt U_\epsilon - {U}_\epsilon\|_{S(0,\tau)}
        + C_0 (\|u_\epsilon - \wt U_\epsilon\|_{S(0,\tau)} + \|v_\epsilon - V_\epsilon\|_{\Y(0,\tau)}).
		\end{align*}
		Now we fix $K' \in 2^\N$
        and $\tau'  > 0$ such that $K' \geq \tilde{K}$, $\tau' \leq \tilde{\tau}$, and
		\begin{align*}
			C_0 (K'^{-1} + \tau'^{\frac{1}{8}}) \leq \frac{1}{2}.
		\end{align*}
		Consequently, we arrive at
		\begin{equation}
		\label{eq:DifferenceUepsEst}
			\|\wt U_\epsilon - {U}_\epsilon\|_{S(0,\tau)} \leq 2 C_2 (\|u_\epsilon - \wt U_\epsilon\|_{S(0,\tau)}
             + \|v_\epsilon - V_\epsilon\|_{\Y(0,\tau)})
		\end{equation}
		if $K \in 2^\N$ with $K \geq K'$ and $\tau \in (0,\tau']$.
	
	Since $v_\epsilon \rightarrow v$ in $\Y(0,\tau)$ and
$V_\epsilon \rightarrow v$ in $\Y(0,\tau)$ by Lemma~\ref{lem:QuadraticWave},
we have
\begin{align} \label{vve-Vve-limit}
     \|v_\epsilon - V_\epsilon\|_{\Y(0,\tau)} \longrightarrow 0,\ \  \text{as}\ \epsilon \rightarrow 0.
\end{align}
Using that $b_\epsilon \rightarrow b$ and $c_\epsilon \rightarrow c$ in $L^\infty(0,\tau;H^1)$ as $b,c \in C([0,\tau];H^1)$ and that
 $u_\epsilon \rightarrow u$ in $\X(0,\tau)$, we thus obtain
	\begin{align} \label{wtUve-u-limit}
		\wt U_\epsilon \longrightarrow u\ \  \text{in } S(0,\tau), \ \ \text{as}\ \ve\to 0,
	\end{align}
which implies
\begin{align}  \label{uve-wtUve-limit}
      u_\ve - \wt U_\ve \longrightarrow 0\ \ \text{in}\ S(0,\tau),\ \text{as}\ \ve\to 0.
\end{align}
 Hence, we obtain from \eqref{eq:DifferenceUepsEst}, \eqref{vve-Vve-limit} and \eqref{uve-wtUve-limit} that
	\begin{align} \label{Uve-wtUve-limit}
		 \wt U_\epsilon - {U}_\epsilon  \longrightarrow 0\ \ \text{in}\ S(0,\tau),\ \text{as}\ \ve\to 0.
	\end{align}

Using~\eqref{Uve-limit}, \eqref{wtUve-u-limit}, and \eqref{Uve-wtUve-limit}, we thus conclude
	\begin{align*}
		u(t) = e^{\imu t \Delta} u_{0} - \imu  \int_0^t e^{\imu (t-s) \Delta}( v u - b \cdot \nabla u - c u + \cT_\cdot(W_2)u)(s) \dd s
	\end{align*}
	in $H^{-1}$ for all $t \in [0,T]$, i.e., $(u,v)$ is a mild solution of~\eqref{eq:RZakharovDuhamel}.
	
	\ref{it:NormalFormIndK} Part~\ref{it:NormalFormIndK} is now an immediate consequence of parts~\ref{it:MildNormalForm} and~\ref{it:NormalFormMild}.
Therefore, the proof is complete.
	\end{proof}

\section*{Acknowledgements}

We thank the anonymous referees for their helpful remarks, which, in particular, led to an improvement of the regularity assumption in Theorem~\ref{Thm-Noise-Reg}.

Funded by the Deutsche Forschungsgemeinschaft (DFG, German Research Foundation) – SFB 1283/2 2021 – 317210226.
D. Zhang  is also grateful for the support by NSFC (No. 12271352, 12322108)
and Shanghai Rising-Star Program 21QA1404500.

\end{document}